\def\apo           {\mbox{\sc s}}
\def\be            {\begin{equation}}
\def\bearl         {\begin{array}{l}}
\def\bearll        {\begin{array}{ll}}
\newcommand\Binom[2]{\mbox{\large$\binom{#1}{#2}$}}
\def\blue          {dotted}
\newcommand\bu[2]  {\begin{array}{c}{}\\[-24pt]#1\\[-4pt]\bullet\\[-7pt]{\scriptstyle q^{#2}}\eear}
\def\cdo           {\,{.}\,}
\def\cir           {\,{\circ}\,}
\def\coa           {_\triangleright}
\def\complex       {{\ensuremath{\mathbb C}}}
\def\dim           {\mathrm{dim}}
\def\DG            {{\ensuremath{\mathscr DG}}}
\def\DGa           {{\ensuremath{{\mathscr DG}^\star_\triangleright}}}
\def\DGN           {\varPi_{\mathscr DG}^\Ng}
\def\DGs           {{\ensuremath{{\mathscr DG}^\star_{}}}}
\def\divfn         {\ensuremath{\mathrm{d}}}
\def\dsty          {\displaystyle }
\def\ee            {\end{equation}}
\def\eear          {\end{array}}
\def\End           {\mathrm{End}}
\def\eps           {\varepsilon}
\def\eq            {\,{=}\,}
\def\expG          {\ensuremath{\mathrm{e}_G}}
\def\Gamm          {\Gamma_{\!e}}
\def\gc            {g_c}
\def\Hom           {\mathrm{Hom}}
\def\Hs            {\ensuremath{H^\star_{\phantom|}}}
\newcommand\hsp[1] {\mbox{\hspace{#1 em}}}
\def\id            {\mbox{\sl id}}
\def\idscs         {\mbox{\scriptsize\sl id}}
\def\iN            {\,{\in}\,}
\def\ko            {{\ensuremath{\Bbbk}}}
\def\Mod           {\mbox{\rm -mod}}
\def\Mods          {\mbox{\rm\scriptsize -mod}}
\def\nE            {\,{\ne}\,}
\newcommand\negspace[1] {~\\[-#1pt]} 
\newcommand\nxl[1] {\\[#1mm]}
\newcommand\Nxl[1] {\\[-1.3em]\\[#1mm]}
\def\oti           {\,{\otimes}\,}
\def\qquand        {\qquad{\rm and}\qquad}
\def\red           {dashed}
\def\Sp            {\mathrm{Sp}}
\def\slz           {\ensuremath{\mathrm{SL}(2,\zet)}}
\def\Symplectic    {Sympl.}
\def\Times         {\,{\times}\,}
\def\To            {\,{\to}\,} 
\def\zet           {\ensuremath{\mathbb Z}}
\def\inv           {^{-1}}
\renewcommand\b[2] {b_{#1|#2}}
\renewcommand\d[2] {\beta_{#1|#2}}
\newcommand\del[1] {\delta_{#1}}
\def\chark      {\ensuremath{\mathrm{char}(\Bbbk)}}
\def\com        {\ensuremath{\mathrm{Com}}}
\def\im         {\ensuremath{\mathrm{Im}}}
\def\NN	        {\ensuremath{\mathbb{N}}}
\def\Ng         {{\mathrm N}} 
\def\bh	        {\ensuremath{\mathbf{h}}}
\def\by	        {\ensuremath{\mathbf{y}}}
\def\bg	        {\ensuremath{\mathbf{g}}}
\def\bx	        {\ensuremath{\mathbf{x}}}
\def\mcg	{\ensuremath{\mathrm{Map}}}
\def\tor	{\ensuremath{\mathfrak{T}}}
\newtheorem{thm}{Theorem}[section]
\newtheorem{lemma}[thm]{Lemma}
\newtheorem{prop}[thm]{Proposition}
\newtheorem{cor}[thm]{Corollary}
\theoremstyle{definition}
\newtheorem{rem}[thm]{Remark}
\newtheorem{defn}[thm]{Definition}
\numberwithin{equation}{section}
\tikzset{object/.style={circle, minimum size=5mm, very thick, draw=black!50, fill=green!20!white}, listobj/.style={rectangle, minimum size=2mm, rounded corners=3mm, very thick, fill=blue!14!white}}
\newcommand\diagZtwoZtwoTwoorbits[4] 
\begin{document}

\title{Mapping class group representations \\ from Drinfeld doubles of finite groups}

\author{~\\ {\bf Jens Fjelstad}\,$^{\,a,b}$ ~~and~~ {\bf J\"urgen Fuchs}\,$^{\,c}$ \\[9mm]
    \large $^a$ Department of Mathematics, Uppsala University\\
    \large Box 480, \ S\,--\,751\,06\, Uppsala \\[9pt] 
    \large $^b$ Department of Mathematics, \"Orebro University\\
    \large Fakultetsgatan 1, \ S\,--\,701\,82\, Orebro \\[9pt]
    \large $^c$ Teoretisk fysik, \ Karlstads Universitet\\
    \large Universitetsgatan 21, \ S\,--\,651\,88\, Karlstad \\~
}

\def\today{}

\maketitle

\begin{abstract}
We investigate representations of mapping class groups of surfaces that arise 
from the untwisted Drinfeld double of a finite group $G$, focusing on surfaces 
without marked points or with one marked point. We obtain concrete descriptions 
of such representations in terms of finite group data. This allows us to establish
various properties of these representations. In particular we show that they have 
finite images, and that for surfaces of genus at least $3$ their restriction 
to the Torelli group is non-trivial iff $G$ is non-abelian.
\end{abstract}

\newpage
\tableofcontents
\newpage


\section{Introduction}

Any factorizable ribbon Hopf algebra provides projective representations of 
mapping class groups of compact oriented surfaces of arbitrary genus with 
a finite collection of marked boundary circles \cite{lyub11}. 
Following (part of) the literature, we refer to such representations of mapping class 
groups as \emph{quantum representations}. 
If the Hopf algebra is semisimple, then these quantum representations are
associated with three-dimensional topological field theories \cite{retu2}.
Both for mathematical aspects of topological field theory and for 
interesting applications, like in quantum computing \cite{nssfd},
a detailed understanding of the quantum representations is desirable.

In this paper we investigate a subset of the 
quantum representations obtained from the class of factorizable ribbon Hopf algebras 
that consists of untwisted Drinfeld doubles of finite groups. The aim is to concretely 
describe these representations in terms of finite group data. 
For surfaces of genus 1 with empty boundary already much is known \cite{kssb,cogR}. 
The corresponding quantum representations have finite image; they are in fact 
permutation representations, and explicit expressions for the modular 
$S$- and $T$-matrices have been obtained. Moreover, the kernel of the representation
is then a congruence subgroup of $\slz$ \cite{coga2,bant14}.
For surfaces of genus $\Ng \,{>}\, 1$ with empty boundary one can invoke the 
expected relation with principal bundles described in the next paragraph to conclude
that the quantum representations have finite image, and that generically they do not factor
through to the symplectic group $\Sp(2\Ng,\zet)$ \cite{gann24}. For genus $0$ with $n$ marked 
boundary circles it is known \cite{etrw} that the quantum representations have finite 
image, and that the representations constructed from
the double of a finite $p$-group factor through a $p$-group.

Let $G$ be a finite group and $\DG$ its (untwisted) Drinfeld double over an 
algebraically closed field \ko. 
For $\ko \eq \complex$ the quantum representations obtained from $\DG$ using the methods of
\cite{lyub11} coincide with those obtained from the Reshetikhin-Turaev
topological field theory associated with the modular tensor category 
$\DG\Mod$ \cite{TUra}. They are also expected to match those obtained from the 
conformal blocks of a holomorphic orbifold CFT \cite{dvvv} as well as those
obtained from quantum gauge theory with finite gauge group \cite{diwi2}.
While coincidence with the representations obtained from the construction in 
\cite{diwi2} may be concluded from the statement in Example 4.5 of \cite{balKi}, 
we are not aware of decisive results about the equivalence for
the case of holomorphic orbifold CFT, though. 
  
In the case of \cite{diwi2} 
one considers the moduli space of principal $G$-bun\-dles over surfaces. For a closed 
oriented surface this moduli space can be identified with the quotient $\Hom(\pi_1,G)/G$, 
with $G$ acting by conjugation. Here $\pi_1$ denotes the fundamental group of the surface,
which at genus $\Ng$ is obtained from the free group on $2\Ng$ generators by a single 
relation, so that the moduli space can be identified with the set of $2\Ng$-tuples 
$(g_1,x_1,g_2,x_2,...\,,g_\Ng,x_\Ng)$ satisfying $\prod_{i=1}^\Ng[g_i,x_i] \eq e$, modulo 
conjugation ($[g,x] \eq g\inv x\inv gx$ is the group commutator). The mapping class group 
$\mcg_{\Ng,0}$ of the surface acts on the moduli space through its inclusion in the 
group $\mathrm{Out}(\pi_1)$ of outer automorphisms of the fundamental group.
When the boundary of the surface consists of a single circle, one instead 
considers the moduli space of $G$-bundles for which 
the monodromy around the boundary circle is restricted to a fixed conjugacy class $c$. 
This moduli space has a similar description, namely as the set of $2\Ng$-tuples 
$(g_1,x_1,...\,,g_\Ng,x_\Ng)$ satisfying $\prod_{i=1}^\Ng[g_i,x_i]\,{\in}\, c$, and again 
comes with an action of the relevant mapping class group, $\mcg_{\Ng,1}$. 
One might therefore hope that the quantum representations studied here will
be permutation representations, and in particular that they will have finite images. 
As it turns out, in general they are quotients of permutation representations;
in the main part of this paper we focus on the subset of those quantum representations 
of $\mcg_{\Ng,0}$ and $\mcg_{\Ng,1}$ that actually are permutation representations.

\medskip

The rest of this paper is organized as follows. In Section \ref{sec:Hopfqreps} 
we review the construction of quantum representations from factorizable ribbon 
Hopf algebras obtained in \cite{lyub11}, restricting to surfaces that either 
are closed or have one marked boundary circle. We then specialize this construction 
to the case that the Hopf algebra is the untwisted Drinfeld double $\DG$ of 
a finite group $G$, and for $\ko \eq \complex$ re-derive the formulas 
for the $S$ and $T$ matrices obtained in \cite{kssb}. We can then verify that 
all these quantum representations, both of $\mcg_{\Ng,0}$ and of $\mcg_{\Ng,1}$, are
quotients of permutation representations. Afterwards we identify a subset of these
representations, including all of them for $\mcg_{\Ng,0}$, that possess a particularly 
simple description as permutation representations. They turn out to be linearizations of
the mapping class group actions on moduli spaces of principal $G$-bun\-dles mentioned above. 
In other words, they are equivalent to permutation representations of $\mathrm{Out}(\pi_1)$ 
on $\mathrm{Rep}(\pi_1,G)$. Theorem \ref{thm:notInvBasis} identifies these permutation 
representations explicitly as sub-representations of quantum representations given 
in the language of \cite{lyub11}.
All of these permutation representations actually satisfy the relations of $\mcg_{\Ng,0}$; 
the underlying group action does not depend on the ground field $\Bbbk$.

In Section \ref{sec:mcg2Nconj} we investigate the group actions underlying 
quantum representations. A number of results that help to identify mapping class group 
orbits are established. The main statement (Theorem \ref{thm:Torelli}) is: For genus 
$\Ng \,{\ge}\, 3$ the Torelli group is represented non-trivially iff 
the finite group $G$ is non-abelian; for genus $\Ng \eq 2$ the Torelli group is 
non-trivially represented iff there exists a pair of elements $g,x\iN G$ whose 
commutator $[g,x]$ does not commute with both $g$ and $x$.
Theorem \ref{thm:congruencesubgroup} extends the congruence subgroup result of 
\cite{coga2,bant14}: A sub-representation of the $\mcg_{1,1}$-representation, containing
the $\mcg_{1,0}$-representation relevant for the results of \cite{coga2,bant14},
is identified such that the kernel of the restriction to this sub-representation is a 
congruence subgroup. However, this congruence subgroup property fails to hold in general 
in the case of $\mcg_{1,1}$.

In Appendix \ref{sec:ex} we provide a number of examples, for some particular finite 
groups $G$ and for genus $1$, $2$ and $3$. 
For genus $1$ we display the mapping class group actions for the dihedral groups $D_n$, 
the generalized quaternion groups $Q_{4n}$, and the alternating group $A_5$.
For genus $2$ and $3$ we present results obtained using GAP \cite{GAP} 
for the groups $S_3$, $D_4$ and $Q_8$, and at genus $2$ also for $A_5$.
The groups $D_{4n}$ and $Q_{8n}$ have identical character tables. 
Thus the categories $\mathscr D D_{4n}\Mod$ and $\mathscr DQ_{8n}\Mod$ have isomorphic 
Gro\-then\-dieck rings even though they are not equivalent as monoidal categories. It is
therefore of some interest to compare the mapping class group representations obtained
for these groups. As shown in \cite{cogR}, the matrix $S$ that together with a matrix $T$ 
generates the $\mcg_{1,0}$\,-re\-pre\-sen\-ta\-ti\-ons
is different for these two series of groups. 

         \clearpage   

\begin{table}[h]\label{table} \caption{\bf List of notations}
\begin{center} \begin{tabular}{l p{12cm}}
   \hline ~\nxl{-2}
          $Z_g$    & the centralizer of a group element $g\iN G$
   \nxl1  $\expG$  & the exponent of $G$
   \nxl1  $C_G$    & the set of conjugacy classes in $G$
   \nxl1  $C_G^{2\Ng}$
          & the set of $2\Ng$-conjugacy classes in $G^{\times 2\Ng}$ (see \eqref{g-action-2N})
   \nxl1  $\gc$    & chosen representative for the conjugacy class $c \iN C_G$
   \nxl1  $[g,h]$  & the group commutator $g\inv h\inv gh$ of two elements $g,h\iN G$
   \nxl1  $\com_G$ & the set $[G,G]$ of commutators in $G$
   \nxl1  $G'$     & the commutator subgroup of $G$, i.e.\ the group generated by $\com_G$ 
   \nxl1  $G^{\mathrm{ab}}$ & the abelianization of $G$, i.e.\ the quotient $G/G'$
   \nxl1  $\tilde\mu_m$ & the multi-commutator map $G^{\times 2m}\To G$ \eqref{tildemu} 
   \nxl1  $\mu_\Ng$ & the map $C_G^{2\Ng}\To C_G^{}$ \eqref{eq:muN} induced by $\tilde\mu_\Ng$
   \nxl1  $\DG$    & the (untwisted) Drinfeld double of $G$
   \nxl1  $\DGa$   & the Lyubashenko coend Hopf algebra in $\DG$\Mod
   \nxl1  \ko\     & an algebraically closed field
   \nxl1  $\mcg_{\Ng,0}$ & the mapping class group of a closed oriented surface of genus $\Ng$
   \nxl1  $\mcg_{\Ng,1}$ 
          & the mapping class group of a compact oriented surface of genus $\Ng$ with one hole
   \nxl1 \hline
\end{tabular} \end{center} \end{table}

{}From our examples it follows that these representations, 
as well as those $\mcg_{1,1}$-repre\-sen\-tations that we study, are indeed non-isomorphic.
For the groups $D_{4}$ and $Q_8$ this non-equivalence generalizes to genus $2$ and $3$. 
In all cases, however, the dimensions of the respective representations coincide.
   
In the appendix we also present examples (see Remark \ref{rem:NoPGroup}) which show 
that at genus larger than $0$ the image of the quantum representation obtained from $\DG$ is
in general not a $p$-group even when $G$ is a $p$-group. Thus a corresponding result 
of \cite{etrw} for the images of the quantum representations of $\mcg_{0,n}$ 
does not generalize to higher genus.

\medskip

Throughout this paper, $G$ denotes a finite group and $Z(G)$ its center,
and we denote by $g^h \,{:=}\, h\inv g h$ and ${}^hg \,{:=}\, h g h\inv$
the right and left conjugation of $g\iN G$ by $h\iN G$.
Our notational conventions for some further basic notions are listed in Table \ref{table}.
A few technical details are presented in two appendices. Appendix \ref{sec:DG} 
reviews the structure of $\DG$ as a factorizable ribbon Hopf algebra.
We determine the center $Z(\DG)$, the space $C(\DG)$ of 
central forms, and the group $\mathcal{G}(\DG)$ of group-like elements, as well as 
the structure of the Lyubashenko coend Hopf algebra $\DGa$ in the category $\DG\Mod$. 
In Appendix \ref{sec:2Nconj} we collect some elementary results concerning 
conjugacy classes of $2\Ng$-tuples of group elements in a finite group $G$.

As we shall see, the quantum representations obtained from $\DG$ with abelian 
group $G$ can be understood through elementary considerations. 
For abelian $G$, any homomorphism from $\pi$ to $G$ factors through to 
$\pi^{\mathrm{ab}} \eq \pi/\pi'$, which coincides with the first homology. Thus the 
Torelli group, being the kernel of the mapping class group action on 
$\pi^{\mathrm{ab}}$, is represented trivially on $\Hom(\pi,G)/G \eq \Hom(\pi,G)$. 
As a consequence we will mainly be interested in non-abelian finite groups $G$. 
We do not impose any further restrictions on the groups $G$, however. 
In particular, there is no reason to restrict to simple groups.

\medskip

Let us finally specify our assumptions on the ground field \ko\ over which $\DG$ and 
its modules are defined. For the construction of quantum representations in \cite{lyub11} to work 
it is sufficient that the category $\DG\Mod$ is a finite tensor category for which, in turn, \ko\
can be any algebraically closed field. The structure of $\DG$-modules depends, however, on 
\ko\ \cite{with2}. Whenever we deal directly with $\DG$-modules and their characters we will
therefore require the ground field to have characteristic $0$, and for simplicity we then 
restrict our attention to $\ko \eq \complex$. This applies to the part of Section
\ref{ssec:LyubDG} that begins after Remark \ref{rem:orbit}. However, with the exception
of appendix B these assumptions are not needed, and accordingly we otherwise
allow the characteristic of \ko\ to be arbitrary. Note that when the characteristic 
of \ko\ divides the order $|G|$ of the group, then the category $\DG\Mod$ is no longer 
semisimple. But the quantum representations that we focus on in this paper are then still 
the permutation representations obtained from the mapping class actions 
discussed in Section \ref{sec:mcg2Nconj}.


\section{Quantum representations from Hopf algebras}\label{sec:Hopfqreps}

In this section we first outline Lyubashenko's method \cite{lyub11} for 
constructing projective representations of mapping class groups from a factorizable 
ribbon Hopf algebra $H$. Afterwards we focus on the case that $H$ is the
(untwisted) Drinfeld double \DG\ of a finite group $G$.

\subsection{The coend \texorpdfstring{\boldmath $H^\star\coa$}{H*} and quantum representations}
            \label{sec:Hmod}\label{ssec:Lyubrev}

For a finite-dimensional Hopf algebra $H$ over $\Bbbk$ we denote the multiplication by
$m$, the unit by $\eta$, the comultiplication by $\Delta$, the counit by $\eps$, and the
antipode by $\apo$. $H$ possesses a non-zero left integral $\Lambda \iN H$ and a 
non-zero right cointegral $\lambda\iN\Hs$, which are unique up to scalars.
$H$ is called ribbon iff it is equipped with an R-matrix 
$R \iN H\oti H$ (i.e.\ is quasitriangular) and with a ribbon element $\nu\iN H$ 
satisfying natural properties (see e.g.\ \cite{KAss}). 
A quasitriangular Hopf algebra $H$ is called factorizable iff the monodromy matrix 
$Q \,{:=}\, R_{21}\,{\cdot}\, R \iN H\oti H$ is non-degenerate, i.e.\ iff the Drinfeld 
map $f_Q\colon \Hs \To H$, acting as $\varphi \,{\mapsto}\, (\varphi\oti \id_H)\cir Q$,
is invertible. For a factorizable Hopf algebra the integral $\Lambda$ is two-sided.

In the sequel $H$ stands for a finite-dimensional factorizable ribbon Hopf algebra.
The category $H\Mod$ of left $H$-modules is finite abelian $\Bbbk$-linear; the
structural elements of $H$ endow $H\Mod$ with a natural structure of a ribbon category.
In particular, denoting by $\tau$ the flip map, the braiding $c_{X,Y}$ of two 
$H$-modules $(X,\rho_X)$ and $(Y,\rho_Y)$ is the linear map 
       \\
$\tau_{X,Y}\cir (\rho_X\oti \rho_Y)\cir (\id_{H}\oti \tau_{H,X}\oti \id_Y)\cir 
(R\oti \id_{X\otimes Y})$, while the twist automorphism $\Theta_X$ of $(X,\rho_X)$ 
is the linear map $\rho_X\cir (\nu\inv\oti \id_{X})$.

\medskip

The starting point for constructing the quantum representations is the coend
  \be
  L := \int^X \!\! X^\vee\oti X \,.
  \label{Lcoend}
  \ee
As an object in $H\Mod$, $L$ is isomorphic to the $H$-module $H^\star\coa$, that is,
to the vector space \Hs\
endowed with the left coadjoint $H$-action \cite{lyub6,vire4}. Explicitly, $x\iN H$ 
acts on \Hs\ by mapping $\varphi\iN \Hs$ to $\sum_i\varphi(\apo(x^{(1)}) y_i x^{(2)})\psi_i$, 
where $1 \,{\mapsto}\, \sum_i y_i\oti \psi_i \iN H\oti\Hs$ is the coevaluation. The 
coend $L$ carries a natural structure of a Hopf algebra in $H\Mod$, with structure 
morphisms given by
  \be
  \bearl
  m\coa^\star(x) = \apo(r^{(1)})\, x^{(1)}\, r^{(2)} \otimes \apo(s)x\, ^{(2)} \,,
  \nxl2
  \eta\coa = \eps^\star \,,
  \nxl2
  \Delta\coa = m^\star \,,
  \nxl2
  \eps\coa = \eta^\star \,,
  \nxl2
  \apo\coa^\star(x) = \apo(r)\, \apo(x)\, \apo(u)\inv\, s \,.
  \eear
  \label{eq:Lapo}
  \ee
Here for brevity instead of the multiplication and antipode we present their pre-duals,
and we use the summation-free Sweedler notation $\Delta(x) \eq x^{(1)} \oti x^{(2)}$
for the coproduct of $x\iN H$ and
$R \eq r\oti s$ for the R-matrix; $u \eq \apo(r)\,s$ is the Drinfeld element. 
The Hopf algebra $H^\star\coa$ comes with a Hopf pairing
  \be
  \omega\coa:\quad \alpha\oti \beta \,\mapsto\,
  \alpha\big(Q^{(1)}\big)\, \beta\big(\apo(Q^{(2)})\big) \,.
  \label{Lhopa}
  \ee
Factorizability of $H$ is equivalent to non-degeneracy of $\omega\coa$. 

\medskip

We denote by $\mcg_{\Ng,n}$ the mapping class group of an oriented compact surface 
of genus $\Ng$ obtained from a closed surface by excising $n \,{\ge}\, 0$ open disks. 
More specifically, the appropriate variant of $\mcg_{\Ng,n}$ is obtained by equipping 
each of the $n$ boundary circles with a marked point and requiring homeomorphisms, 
as well as homotopies, to preserve the set of marked points.
As shown in \cite{lyub11}, the structure described above allows one to construct a 
finite-dimensional projective representation of $\mcg_{\Ng,n}$ on the $\Bbbk$-vector space
  \be
  \Hom_{H\Mods}(X^{\otimes n},L^{\otimes \Ng}) \,,
  \ee
for any object $X$ of $H\Mod$. 
We will restrict our attention to surfaces with zero or one holes, $n\iN \{0,1\}$.
$\mcg_{\Ng,1}$ can then be defined as the group of homotopy classes of 
orientation preserving homeomorphisms that preserve the boundary \emph{pointwise}.

\medskip

To define the quantum representations of $\mcg_{\Ng,0}$ and $\mcg_{\Ng,1}$, first 
choose for every genus $\Ng\iN \mathbb{N}$ a representative surface $\Sigma_\Ng$ with one hole, 
together with a suitable collection of simple closed curves on $\Sigma_\Ng$. 
A convenient (non-minimal) choice for the latter consists (see \cite[Sect.\,4.5]{lyub6} 
or \cite[Sect.\,2.3]{fuSs5}) of four simple closed curves $a_i$, $b_i$, $d_i$, 
and $e_i$, for each $i \eq 1,2,...\,, \Ng$, modulo the 
identification $a_1 \eq d_1 \eq e_1$. Furthermore for each $i \eq 1,2,...\,, \Ng$ 
we have to choose a one-holed torus $F_i$ embedded in $\Sigma_\Ng$. 
This collection of data is visualised in the following figure:
  \be
\begin{picture}(400,128) \put(-15,0){
	\put(0,0)       {\includegraphics[scale=0.6]{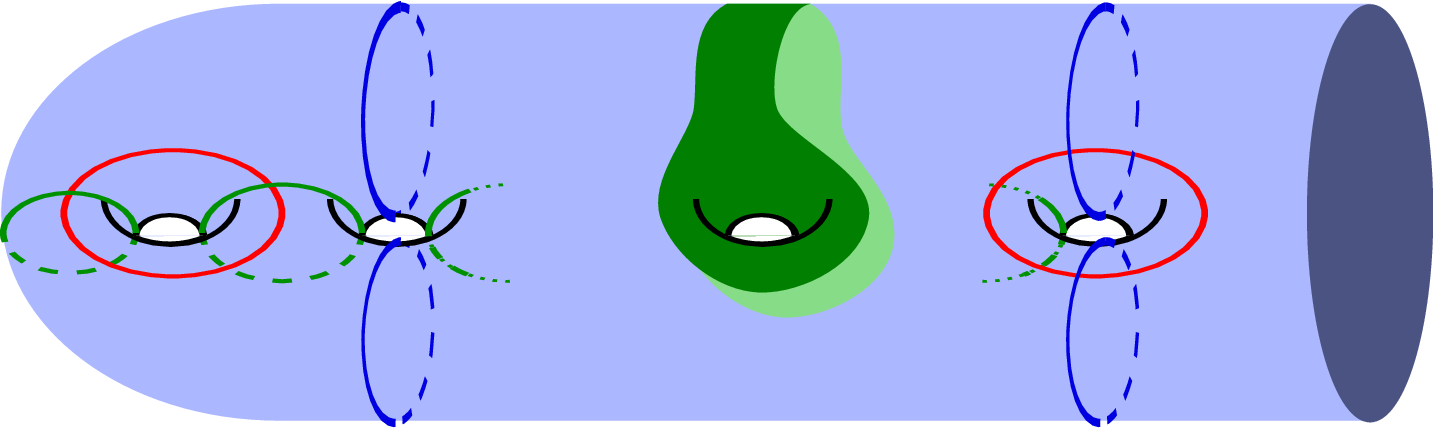}}
	\put(7,68)      {\scriptsize $a_1$}
	\put(90,70)	{\scriptsize $a_2$}
	\put(135,73)	{\scriptsize $a_3$}
	\put(277,73)	{\scriptsize $a_\Ng$}
	\put(50,83)	{\scriptsize $b_1$}
	\put(295,80)	{\scriptsize $b_\Ng$}
	\put(107,90)	{\scriptsize $d_2$}
	\put(107,25)	{\scriptsize $e_2$}
	\put(210,80)	{\scriptsize $F_i$}
	\put(310,93)	{\scriptsize $d_\Ng$}
	\put(310,25)	{\scriptsize $e_\Ng$}
	\put(155,55)	{$\cdots$}
	\put(265,55)	{$\cdots$}
       }
\end{picture}
  \label{thesurface}
  \ee

$\mcg_{\Ng,1}$ has a generating set consisting of (the homotopy classes of) the 
following maps (\cite{lyub6}, see also \cite{fuSs5}):
\def\leftmargini{1.3em}\begin{itemize}
  \item[1.] inverse Dehn twists $T\inv_{b_i}$ and $T\inv_{d_i}$
   around each of the curves $b_i$ and $d_i$, for $i \eq 1,2,...\,, \Ng$;
  \item[2.] inverse Dehn twists $T\inv_{a_i}$ and $T\inv_{e_i}$
   around each of the curves $a_i$ and $e_i$, for $i \eq 2,3,...\,, \Ng$.
\end{itemize}
While these Dehn twists already generate $\mcg_{\Ng,1}$, occasionally it is convenient to add:
\begin{itemize}
  \item[3.] a modular $S$-transformation $S_i$
   with support in the interior of $F_i$, for $i \eq 1,2,...\,, \Ng$.
\end{itemize}
By capping the hole of the surface \eqref{thesurface} one obtains a closed surface of 
genus $\Ng$. Thus the same set of Dehn twists generates $\mcg_{\Ng,0}$, and there is only a 
single additional relation, which implements the condition that a Dehn twist $T_\partial$
along a simple closed curve isotopic to the boundary has to be trivial, so that the 
relation that defines $\mcg_{\Ng,0}$ as a quotient of $\mcg_{\Ng,1}$ is $T_\partial \eq \id$. 
Using the so-called star relation, as e.g.\ given by formula (E) of \cite{gervS2}, one 
checks that $T_\partial$ can be expressed as
  \be
  T_\partial = (T_{d_1}^3 T_{b_1})^3\, (T_{e_2}^2 T_{d_2}^{} T_{b_2}^{})^3_{}\,
  (T_{e_3}^2 T_{d_3}^{} T_{b_3}^{})^3_{} \cdots\, (T_{e_\Ng}^2 T_{d_\Ng}^{} T_{b_\Ng}^{})^3_{} .
  \ee
We note that for genus $\Ng \eq 1$, the twist $T$ around $d_1$ together with the modular 
$S$-transformation generate both $\mcg_{1,0}$ and $\mcg_{1,1}$. In the former case, 
$T$ and $S$ satisfy $(ST)^3 \eq S^2$, $S^4 \eq \id$ and $[S^2,T] \eq \id$, which are 
defining relations for the modular group \slz. In the latter case there is
a short exact sequence
  \be
  1 \longrightarrow \zet\longrightarrow\mcg_{1,1}\longrightarrow \slz\longrightarrow 1 \,,
  \ee
i.e.\ $\mcg_{1,1}$ is a central extension of \slz.

To proceed we define a collection of endomorphisms in the category $H\Mod$. For $h\iN H$ 
we denote by $l_h, r_h\iN \End_\ko(\Hs)$ the duals of the linear endomorphisms of $H$ 
given by left and right multiplication with $h$, i.e.\
  \be
  \big( l_h(\alpha)\big) (k) = \alpha(h\, k) \qquad{\rm and}\qquad
  \big( r_h(\alpha)\big) (k) = \alpha(k\, h)
  \ee
for $\alpha\iN\Hs$ and $k\iN H$. Then for $(X,\rho)\iN H\Mod$ we set
  \be
  \begin{array}{rrl}
  \End_{H\Mods}(H^\star\coa) \,\ni \!\!& T_L: & \alpha \,\mapsto\, l_{\apo(\nu\inv)}(\alpha) \,,
  \Nxl4
  \End_{H\Mods}(H^\star\coa\oti H^\star\coa) \,\ni \!\!& O_L:
  & \alpha\oti \beta \,\mapsto\, r_{Q^{(1)}}(\alpha)\oti l_{\apo(Q^{(2)})}(\beta) \,,
  \Nxl4
  \End_{H\Mods}(H^\star\coa) \,\ni \!\!& S_L:
  & \!\!\! = (\eps\coa\oti\id)\circ O_L\circ (\id\oti \Lambda\coa) \,,
  \Nxl4
  \End_{H\Mods}(H^\star\coa\oti X) \,\ni \!\!& Q_L^X:
  & \alpha\oti m \,\mapsto\, r_{Q^{(1)}}(\alpha)\oti \rho(\apo(Q^{(2)}))\,m \,.
  \label{eq:TL-QL}
  \eear
  \ee
We use these maps to specify a set of endomorphisms of $L^{\otimes \Ng}$:

\begin{defn}\label{def:coendendo}~\\
{\rm (i)}\,
For any $N \iN \NN$ we introduce the following elements of $\End_{H\Mods}(L^{\otimes N})$:
  \begin{align}
	  z_S^{(i)} &:= \id_{L^{\otimes (N-i)}}\oti S_L^{} \oti \id_{L^{\otimes(i-1)}} \,,
        & i=1,2,...\,,N\label{eq:MCGendo1} \,, \nxl2
  z_a^{(i)} &:= \id_{L^{\otimes (N-i)}} \oti \left[ (T_L^{} \oti T_L^{} )\circ O_L^{}  \right]
            \oti \id_{L^{\otimes(i-2)}} 
        & i=2,3,...\,,N\label{eq:MCGendo2} \,, \nxl2
  z_b^{(i)} &:= \id_{L^{\otimes (N-i)}}\oti \left[S_L^{} \circ T_L^{}\circ S_L\inv\right]
         \oti \id_{L^{\otimes(i-1)}} \,,
        & i=1,2,...\,,N\label{eq:MCGendo3} \,, \nxl2
	 z_d^{(i)} &:= \id_{L^{\otimes (N-i)}}\oti T_L^{} \oti \id_{L^{\otimes(i-1)}}
        & i=1,2,...\,,N\label{eq:MCGendo4} \,, \nxl2
  z_e^{(i)} &:= \id_{L^{\otimes (N-i)}}\oti
        \big[ Q_L^{L^{\otimes (i-1)}} \circ (T_L^{} \oti \Theta_{L^{\otimes(i-1)}}) \big] \,,
        & i=2,3,...\,,N\label{eq:MCGendo5} \,.
  \end{align}
  \label{eq:MCGendo1-5}
~\\[-13pt]
{\rm{(ii)}}\,
The quantum representation of $\mcg_{\Ng,1}$ on $\Hom_{H\Mods}(X, L^{\otimes \Ng})$ is defined 
by letting the generators \eqref{eq:MCGendo1}\,--\,\eqref{eq:MCGendo5} act via post-composition. 
Concretely, the inverse Dehn twists 
around the cycles $b_i$, $d_i$ and $e_i$ are represented by post-composition with the
endomorphisms $z^{(i)}_{b}$, $z_{d}^{(i)}$ and $z_e^{(i)}$, respectively, for 
$i \eq 1,2,...\,,\Ng$; the inverse Dehn twist around $a_i$ is represented by post-composition 
with $z_a^{(i)}$, for $i \eq 2,3,...\,,\Ng$; finally the modular $S$-transformation inside 
the one-holed torus $F_i$ is represented by post-composition with $z_S^{(i)}$, for $i \eq 1,2,...\,, \Ng$.
 \\[2pt]
{\rm{(iii)}}\,
The quantum representation of $\mcg_{\Ng,0}$ is obtained by taking $X$ in (ii)
to be the trivial one-dimensional $H$-module $V_0 \eq (\ko,\eps)$.
\end{defn}


\subsection{The case \texorpdfstring{\boldmath $H \eq \DG$}{H=DG}}
           \label{ssec:LyubDG}

We now specialize the quantum representations to the case that $H$ is the 
Drinfeld double \DG\ of a finite group $G$. To keep the presentation short, 
pertinent information about $\DG$ and the coend $\DGa$ is relegated to Appendix 
\ref{sec:DG}. The factorizable ribbon Hopf algebra $\DG$ has a basis 
$\{\b g x \,|\, g,x\iN G\}$ in which the structural data
take the form given in \eqref{DGmorphsinbas}. We denote by $\{\d h y\}$ the 
basis of $\DGs$ dual to $\{\b g x\}$; in this basis the \DG-action on $\DGa$ 
is given by \eqref{eq:DGcoa}. The structure of $\DGa$ as a Hopf algebra in 
$\DG\Mod$ is provided in \eqref{eq:DGamult..}, while its two-sided integral, 
cointegral, and Hopf pairing are given in \eqref{eq:DGaintegrals} and 
\eqref{eq:DGaHopfpairing}.

\medskip

When expressing statements in terms of the chosen bases of \DG\ and \DGs\
it is often convenient to use the notation $(g|x)$ for elements in $G\Times G$, 
and analogously
  \be
  (\bh|\by) := \big((h_1|y_1),\ldots,(h_m|y_m)\big)
  \label{def:bhby}
  \ee
for elements of $G^{\times 2m}$. The notation $(\bh^g|\by^g)$ then indicates that $g \iN G$ 
acts on $G^{\times 2m}$ by right conjugation of all pairs $(h_i|y_i)$ in \eqref{def:bhby}, i.e.\ 
  \be
  (\bh^g|\by^g) = \big((h_1^g|y_1^g),\ldots,(h_m^g|y_m^g)\big) \,,
  \ee
and analogously for left conjugation. We define for each $m\iN\NN$ a \emph{multi-commutator} 
map $\tilde\mu_m \colon G^{\times 2m}\To G$ as an (ordered) product of commutators:
  \be
  \tilde\mu_m:\quad  (\bh|\by) \,\mapsto \prod_{i=1}^m\, [h_i,y_i] \,.
  \label{tildemu}
  \ee
In terms of basis elements, the endomorphisms \eqref{eq:TL-QL}, with the object $X$ in $Q_L^X$
specialized to $X \eq (\DGa)^{\otimes m}$, together with the composite 
endomorphism $U\coa^{} \,{:=}\, S\coa^{}\cir T\coa^{}\cir S\coa\inv$, are then
expressed as follows \cite{ffss}:
  \be
  \bearll
  T\coa:  & \d g x \,\mapsto\, \d g{g\inv x} \,,    
  \nxl2
  S\coa:  & \d g x \,\mapsto\, \d {x\inv} {g^x} \,, 
  \nxl2
  U\coa:  & \d g x \,\mapsto\, \d {xg} {x} \,,      
  \nxl2
  O\coa:  & \d g x\oti \d h y  \,\mapsto\, \d g {({}^{gx}h)x} \oti \d {{}^{(g^x)}h}{(g^x)y} \,,
  \nxl3
  Q\coa^m:& \d g x\oti \d \bh \by \,\mapsto\, \d g {gx(g^x\cdot \tilde\mu_m(\bh|\by))\inv}
     \oti \d {{}^{(g^x)}\bh}{{}^{(g^x)}\by} \,.
  \eear
  \label{eq:DGT..}
  \ee
The twist automorphism of $(\DGa)^{\otimes m}$ is given by
  \be
  \Theta_{(\DGa)^{\otimes m}}:\quad
  \d \bh \by \mapsto \d {{}^{\tilde\mu_m(\bh |\by)} \bh} {{}^{\tilde\mu_m(\bh |\by)}\by} \,,
  \qquad
  \label{eq:DGTheta}
  \ee
and the composites $L\coa \,{:=}\, (T\coa\oti T\coa)\circ O\coa$ and 
$M_{\triangleright,m} \,{:=}\, Q\coa^m\circ (T\coa\oti \Theta_{(\DGa)^{\otimes m}})$ take the form
  \be
  \bearll
  L\coa: & \d g x\oti \d h y \,\longmapsto\, \d {g}{({}^xh)g\inv x} \oti \d {{}^{(g^x)}h}{(g^x)h\inv y} \,,
  \nxl3
  M_{\triangleright,m}: & \d g x\oti \d \bh \by \,\longmapsto\, \d g {x(g^x\cdot \tilde\mu_m(\bh|\by))\inv}
  \oti \d {{}^{(g^x)\tilde\mu_m(\bh|\by)} \bh} {{}^{(g^x)\tilde\mu_m(\bh|\by)} \by} \,.
  \label{eq:DGLM}
  \eear
  \ee
 
In the sequel we regard $G^{\times 2\Ng}$ as a $G$-space, with $G$ acting by conjugation,
  \be
  g: \quad  (\bh|\by) \longmapsto (\bh^g|\by^g) \,.
  \label{g-action-2N}
  \ee
We refer to the orbits of the action \eqref{g-action-2N} of $G$ on $G^{\times 2\Ng}$ as 
\emph{$2\Ng$-conjugacy classes}, and denote the set of all $2\Ng$-conjugacy classes by 
$C^{2\Ng}_G$. For elements of $C^2_G$ we alternatively also use the term \emph{diconjugacy classes}. 

We now note that according to \eqref{eq:DGT..} for any pair $(g|x)$ we have
  \be
  \begin{array}{rl}
  S\coa^2: & \d g x \,\mapsto\, \d{(g\inv)^{gx}}{(x\inv)^{gx}} \,,
  \Nxl3
  S\coa^4: & \d g x \,\mapsto\, \d{g^{[g,x]}}{x^{[g,x]}} \,,
  \Nxl3
  (S\coa\,{\circ}\, T\coa)^3: & \d g x \,\mapsto\, \d {(g\inv)^{gx}}{(x\inv)^{gx}}.
  \eear
  \ee
Thus the operators $S\coa$ and $T\coa$ define an \slz-action on diconjugacy 
classes of basis elements and, when restricting to commuting pairs $(g|x)$, 
an \slz-action already on such basis elements.

We simplify notation by renaming the endomorphisms from Definition \ref{def:coendendo} 
as $z_S^{(i)}\,{\rightsquigarrow}\, S\coa^{(i)}$, $z^{(i)}_d\,{\rightsquigarrow}\, T\coa^{(i)}$, 
$z_b^{(i)}\,{\rightsquigarrow}\, U\coa^{(i)}$, $z_a^{(i)}\,{\rightsquigarrow}\, L\coa^{(i)}$
and $z_e^{(i)}\,{\rightsquigarrow}\, M\coa^{(i)}$. We observe:

\begin{lemma}\label{lem:endoproperties}~\\
The endomorphisms $T\coa^{(i)}$, $S\coa^{(i)}$, $U\coa^{(i)}$, $L\coa^{(i)}$ and 
$M\coa^{(i)}$ have the following properties:
\\[2pt]
{\rm (i)} They permute the basis elements $\d \bh \by$ of $G^{\times 2m}$.
\\[2pt]
{\rm (ii)} They are equivariant with respect to the action of $G$ by conjugation 
of the labels $(\bh,\by)$ of the basis elements $\d \bh \by$.
\\[2pt]
{\rm (iii)} They preserve the multi-commutators
$\tilde\mu_m(\bh|\by)$ of the basis elements $\d\bh\by$.
\end{lemma}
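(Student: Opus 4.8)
The plan is to reduce the five endomorphisms to the elementary maps from which they are assembled and to verify the three properties for those. Up to tensoring with identities on the untouched factors, each of $T\coa^{(i)},S\coa^{(i)},U\coa^{(i)},L\coa^{(i)},M\coa^{(i)}$ is a composite of the maps $T\coa$, $S\coa$, $U\coa\eq S\coa\cir T\coa\cir S\coa\inv$, $O\coa$, $Q\coa^m$ and the twist $\theta_{(\DGa)^{\otimes m}}$, all written out explicitly in \eqref{eq:DGT..}, \eqref{eq:DGTheta} and \eqref{eq:DGLM}. Each of (i)--(iii) is stable under composition, and tensoring with $\id$ merely fixes the untouched factors; for (iii) this means it suffices that each building block preserve the product of commutators over the factors it acts on, since the remaining $[h_j,y_j]$ are left alone. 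Hence I would check (i)--(iii) only for the elementary maps.

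Properties (i) and (ii) are then immediate. For (i), the formulas \eqref{eq:DGT..}--\eqref{eq:DGLM} show that every elementary map sends a basis vector $\d\bh\by$ to a single basis vector; since $G^{\times 2m}$ is finite and each map is invertible --- one reads off the inverse, e.g.\ $T\coa\inv\colon\d g x\Mapsto\d g{gx}$, or uses that it represents an invertible mapping class --- it permutes the basis. For (ii), conjugation by any $k\iN G$ is a group automorphism $c_k$, and every output label is built from the input labels by multiplication, inversion and left/right conjugation; since $c_k$ commutes with all of these --- $(ab)^k\eq a^k b^k$, $({}^a b)^k\eq{}^{a^k}(b^k)$, and so on --- applying $c_k$ to all labels and then the map coincides with the map followed by $c_k$, which is precisely equivariance for the action \eqref{g-action-2N}.

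The substance of the lemma is (iii). For the one-factor maps $T\coa,S\coa,U\coa$ a one-line computation gives $[g,g\inv x]\eq[g,x]$, $[x\inv,g^x]\eq[g,x]$ and $[xg,x]\eq[g,x]$, so $[g,x]$ is preserved. For the twist, every label is conjugated by $\mu\,{:=}\,\tilde\mu_m(\bh|\by)$, and since $\tilde\mu_m$ is conjugation-covariant one finds that $\tilde\mu_m$ of the output equals ${}^{\mu}\mu\eq\mu$, so $\tilde\mu_m$ is fixed. The real work is the two-factor map $O\coa$ (hence $L\coa$) and the map $Q\coa^m$ (hence $M\coa$), where the factors get entangled: there I would compute the product of the commutators of the output labels directly and verify that the intervening factors telescope away to leave the input product. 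For $Q\coa^m$, for instance, the first output commutator is $[g,gx(g^x\mu)\inv]$ while the remaining factors contribute ${}^{(g^x)}\mu$, and multiplying these out collapses to $[g,x]\,\mu$, which is the input value $[g,x]\,\tilde\mu_m(\bh|\by)$. This explicit telescoping cancellation is the main obstacle; conceptually it reflects that $\tilde\mu_m$ records the boundary monodromy of the one-holed surface, which every mapping class preserves.
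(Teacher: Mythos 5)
Your proposal is correct and follows essentially the same route as the paper: (i) and (ii) are read off from the explicit formulas (the paper invokes the identity $[g^z,x^z]\eq[g,x]^z$ where you invoke naturality of the formulas under the conjugation automorphism), and (iii) rests on the identities $[g,g\inv x]\eq[x\inv,g^x]\eq[xg,x]\eq[g,x]$ together with exactly the telescoping cancellation you describe, which is the paper's ``direct, if lengthy, calculation''. The only difference is organizational: you verify the building blocks $O\coa$, $Q\coa^m$ and the twist separately and then invoke stability of (i)--(iii) under composition and under tensoring with identities, whereas the paper checks the composites $L\coa$ and $M\coa$ directly from \eqref{eq:DGLM}; since $T\coa$ preserves commutators, the two computations coincide factor by factor.
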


\begin{proof}
Property (i) follows immediately from the definition of the endomorphisms. 
\\[2pt]
Property (ii) follows from simple manipulations based on the identity $[g^z,x^z] \eq [g,x]^z_{}$.
\\[2pt]
For proving (iii), note that it suffices to restrict to those pairs $(h_i|y_i)$
on which the endomorphism acts non-trivially. Accordingly, the identities
  \be
  [g,g\inv x] = [g,x] \,, \qquad
  [x\inv,g^x] = [g,x]  \qquand
  [xg,x]      = [g,x]
  \ee
establish (iii) for $T\coa^{(i)}$, $S\coa^{(i)}$ and $U\coa^{(i)}$, respectively. 
Next we note that
  \be
  [g,({}^xh)g\inv x] = [g,x]\, h\inv\, {}^{(g^x)}h \qquad{\rm and}\qquad
  [{}^{(g^x)}h,(g^x)h\inv y] =	{}^{(g^x)}(h\inv)\, h^y \,,
  \ee
which imply 
  \be
  [g,({}^xh)g\inv x] \, [{}^{(g^x)}h,(g^x)h\inv y]
   = [g,x] h\inv\, h^y = [g,x] \, [h,y] \,,
  \ee
thus establishing (iii) for $L\coa^{(i)}$.
Finally by direct, if lengthy, calculation one sees that
  \be
  \bearl
  [g,x(g^x\, \tilde\mu_m(\bh|\by))\inv]
  = [g,x]\, \tilde\mu_m(\bh|\by) \, {}^{(g^{x})}(\tilde\mu_m(\bh|\by)\inv)
  \qquad{\rm and}
  \Nxl4
  \tilde\mu_m\big( {}^{(g^x)\tilde\mu_m(\bh|\by)} \bh|{}^{(g^x)\tilde\mu_m(\bh|\by)} \by\big)
  = {}^{(g^x)}\tilde\mu_m(\bh|\by) \,,
  \eear
  \ee
showing that (iii) holds for $M\coa^{(i)}$ as well.
\end{proof}

Using the explicit form of the $\DG$-action on $(\DGa)^{\otimes \Ng}$
it is easily verified that the $G$-equivariance is equivalent to the linear maps 
\eqref{eq:DGT..}\,--\,\eqref{eq:DGLM} being morphisms in $\DG\Mod$.

\medskip

As we will explain in Remark \ref{rem:orbit}, to fully understand the quantum representations 
of $\mcg_{\Ng,0}$ and $\mcg_{\Ng,1}$ it is enough to understand the mapping class group orbit 
of the identity morphism $\id_{(\DGa)^{\otimes \Ng}}\iN \End_{\DG\Mods}(\DGa^{\otimes \Ng})$. 
Lemma \ref{lem:endoproperties}\,(i) implies that this orbit is a finite set 
of endomorphisms that permute basis elements of the type $\d \bh \by$. In other words, we 
have established the existence of a finite $\mcg_{\Ng,1}$-set whose associated permutation 
representation determines all the quantum representations of $\mcg_{\Ng,1}$ and $\mcg_{\Ng,0}$.
We denote this permutation representation of $\mcg_{\Ng,1}$ by $\DGN$. As a vector space, 
$\DGN$ is spanned by the endomorphisms of $(\DGa)^{\otimes \Ng}$ that are arbitrary words
in the letters $T\coa^{(i)}$, $S\coa^{(i)}$, $U\coa^{(i)}$, $L\coa^{(i)}$ and $M\coa^{(i)}$.
The relation between the permutation representation $\DGN$ and a general 
quantum representation is summarized in

\begin{prop}\label{prop:genqreps}
For any \DG-module $V$ the quantum representation on the morphism space
{\rm $\Hom_{\DG\Mods}\big(V,(\DGa)^{\otimes \Ng}\big)$} coincides with the image of
  \be
  \Hom_{\DG\Mods}\big(V,(\DGa)^{\otimes \Ng}\big)\otimes \DGN 
  \ee
under the composition map 
  \be
  \bearl
  \circ:\quad \Hom_{\DG\Mods}\big(V,(\DGa)^{\otimes \Ng}\big)\otimes
  \End_{\DG\Mods}\big((\DGa)^{\otimes \Ng} \big)
  \nxl2 \hspace*{15em}
  \longrightarrow\, \Hom_{\DG\Mods}\big(V,(\DGa)^{\otimes \Ng}\big) \,,
  \eear
  \ee
 where on the left hand side the mapping class group $\mcg_{\Ng,1}$ acts non-trivially 
only on the second factor.
\end{prop}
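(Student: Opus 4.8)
The plan is to show that the quantum representation, as defined in Definition \ref{def:coendendo}(ii) via post-composition of the generating endomorphisms, factors through the permutation representation $\DGN$ together with the composition pairing. The statement is essentially a structural observation: the action of $\mcg_{N,1}$ on $\Hom_{\DG\Mods}\big(V,(\DGa)^{\otimes N}\big)$ is, by construction, implemented by post-composing with the word in the generating endomorphisms that represents a given mapping class. First I would recall that each generator of $\mcg_{N,1}$ acts on a morphism $\phi \iN \Hom_{\DG\Mods}\big(V,(\DGa)^{\otimes N}\big)$ as $\phi \mapsto z \cir \phi$, where $z$ is one of $S\coa^{(i)}$, $T\coa^{(i)}$, $U\coa^{(i)}$, $L\coa^{(i)}$, $M\coa^{(i)}$; hence an arbitrary element of $\mcg_{N,1}$ acts by post-composition with the corresponding word $w$ in these letters.

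The key step is to observe that the assignment sending a mapping class to the word $w \iN \End_{\DG\Mods}\big((\DGa)^{\otimes N}\big)$ it induces lands precisely in the subspace $\DGN$, which by definition is spanned by all such words. Thus the $\mcg_{N,1}$-action on $\Hom_{\DG\Mods}\big(V,(\DGa)^{\otimes N}\big)$ is recovered by first evaluating the word $w$ inside $\DGN$ and then applying the composition map $\circ$. This is exactly the content of the claimed identification: the image of $\Hom_{\DG\Mods}\big(V,(\DGa)^{\otimes N}\big) \oti \DGN$ under $\circ$ consists of all morphisms $w \cir \phi$ with $w \iN \DGN$ and $\phi \iN \Hom_{\DG\Mods}\big(V,(\DGa)^{\otimes N}\big)$, and the quantum representation is obtained by letting $\mcg_{N,1}$ act on the $\DGN$-factor alone. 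I would verify that $\mcg_{N,1}$ indeed acts trivially on the first tensor factor: the generators act only by post-composition, which alters the $(\DGa)^{\otimes N}$-endomorphism being applied but leaves the source morphism $\phi$ untouched.

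The remaining point is to make precise that $\DGN$ carries a genuine $\mcg_{N,1}$-action (not merely a set of endomorphisms). Here I would invoke Lemma \ref{lem:endoproperties}(i), which guarantees that each generating endomorphism permutes the basis elements $\d \bh \by$, so that words in these letters form a finite set closed under the group operation; the action of $\mcg_{N,1}$ on $\DGN$ is then by left multiplication of words, i.e.\ by post-composition of endomorphisms. Consistency with the mapping class group relations follows because the $z$'s are defined as the images of the generators under Lyubashenko's construction, which by \cite{lyub11} already satisfies those relations up to the projective phase. The main obstacle is purely bookkeeping: one must confirm that ``acting on the second factor by $\mcg_{N,1}$ and then composing'' genuinely reproduces the post-composition action of Definition \ref{def:coendendo}(ii), rather than some twisted version of it. This reduces to the associativity and functoriality of composition, namely $w \cir (w' \cir \phi) = (w \cir w') \cir \phi$, which makes the two descriptions tautologically agree. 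Once this compatibility is spelled out, the proposition follows immediately.
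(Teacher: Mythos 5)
Your proposal is correct and follows essentially the same route as the paper, which in fact states this proposition without a separate proof because it is a direct summary of the preceding discussion: Definition \ref{def:coendendo}\,(ii) (action by post-composition), Lemma \ref{lem:endoproperties}\,(i) (the generators permute basis elements, so words in them form a finite set), and the definition of $\DGN$ as the span of such words. Your additional bookkeeping — that associativity of composition, $w \cir (w' \cir \phi) \eq (w \cir w') \cir \phi$, makes the two descriptions tautologically agree and that the group acts trivially on the first tensor factor — is exactly the content the paper leaves implicit.
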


As a consequence, we can relate quantum representations to the permutation representation
$\DGN$ as follows:

\begin{cor}
Every quantum representation is isomorphic to a quotient of a direct sum
$(\DGN)^{\oplus d}$ of multiple copies of permutation representations. It follows in particular
that the image of any quantum representation of $\mcg_{\Ng,0}$ or $\mcg_{\Ng,1}$ is finite.
\end{cor}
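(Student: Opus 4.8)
The plan is to deduce the corollary directly from Proposition~\ref{prop:genqreps} together with the finiteness already extracted from Lemma~\ref{lem:endoproperties}\,(i). First I would recall that by Proposition~\ref{prop:genqreps} any quantum representation on $\Hom_{\DG\Mods}\big(V,(\DGa)^{\otimes N}\big)$ is the image of $\Hom_{\DG\Mods}\big(V,(\DGa)^{\otimes N}\big)\otimes \DGN$ under post-composition, with $\mcg_{N,1}$ acting only on the second tensor factor. Writing $d := \dimc \Hom_{\DG\Mods}\big(V,(\DGa)^{\otimes N}\big)$ and fixing a basis $\{\phi_1,\ldots,\phi_d\}$ of this morphism space, the composition map restricts on each summand $\phi_j\otimes\DGN$ to a $\mcg_{N,1}$-equivariant map $\DGN\To\Hom_{\DG\Mods}\big(V,(\DGa)^{\otimes N}\big)$, $\xi\mapsto \xi\cir\phi_j$. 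Hence the whole map is a $\mcg_{N,1}$-equivariant surjection from $(\DGN)^{\oplus d}$ onto the quantum representation, which is exactly the asserted quotient statement. The case $X \eq V_0$ of Definition~\ref{def:coendendo}\,(iii) then yields the corresponding statement for $\mcg_{N,0}$.

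The finiteness of the image is the real content, and it rests on the permutation nature of $\DGN$. By Lemma~\ref{lem:endoproperties}\,(i) the five families of endomorphisms $T\coa^{(i)}$, $S\coa^{(i)}$, $U\coa^{(i)}$, $L\coa^{(i)}$, $M\coa^{(i)}$ all permute the finite basis $\{\d\bh\by \mid (\bh|\by)\iN G^{\times 2N}\}$. I would therefore observe that each generator, acting by post-composition on $\End_{\DG\Mods}\big((\DGa)^{\otimes N}\big)$, permutes this finite set and so defines an element of the symmetric group $\Sym\big(G^{\times 2N}\big)$, which is finite of order $(|G|^{2N})!$. The assignment of generators to permutations extends to a group homomorphism $\mcg_{N,1}\To \Sym\big(G^{\times 2N}\big)$ whose image is a subgroup of a finite group, hence finite. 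Since $\DGN$ is by definition the permutation representation attached to this finite $\mcg_{N,1}$-set, the image of $\mcg_{N,1}$ acting on $\DGN$ is finite.

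Finally I would pass this finiteness to an arbitrary quantum representation. A finite direct sum $(\DGN)^{\oplus d}$ is again a permutation representation of the \emph{same} finite $\mcg_{N,1}$-set quotient, so its image is finite; and any quotient of a representation with finite image again has finite image, because the quotient map intertwines the two actions and is surjective on the acting matrices. Applying this to the surjection constructed in the first paragraph shows that the image of $\mcg_{N,1}$, and hence (via the quotient $\mcg_{N,0}$) of $\mcg_{N,0}$, acting on any quantum representation $\Hom_{\DG\Mods}\big(V,(\DGa)^{\otimes N}\big)$ is finite.

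The main obstacle, such as it is, is bookkeeping rather than mathematics: one must be careful that the only nontrivial $\mcg_{N,1}$-action in Proposition~\ref{prop:genqreps} is on the $\DGN$ factor, so that the equivariant structure really does turn the composition map into a morphism of $\mcg_{N,1}$-representations and the target becomes a genuine quotient. Verifying equivariance amounts to checking that post-composition by a generator commutes with the fixed pre-composition by each $\phi_j$, which is immediate from associativity of composition in $\DG\Mod$. Everything else is the general principle that subquotients of finite-image representations have finite image.
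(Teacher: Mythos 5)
Your proposal is correct and follows essentially the same route as the paper: the paper states the corollary as an immediate consequence of Proposition~\ref{prop:genqreps} together with the finiteness of the $\mcg_{N,1}$-set underlying $\DGN$ established via Lemma~\ref{lem:endoproperties}\,(i), which is precisely the argument you spell out (choice of basis of the morphism space, equivariant surjection by associativity of composition, and finiteness of the image passing to quotients). Your elaboration of the bookkeeping — in particular that post-composition commutes with pre-composition by the $\phi_j$, and that a surjective intertwiner forces the image of a quotient representation to be a quotient of a finite group — fills in exactly the details the paper leaves implicit.
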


We will not pursue the full structure of the permutation representations $\DGN$. 
Instead we will identify subspaces of $(\DGa)^{\otimes \Ng}$ that carry representations 
of $\mcg_{\Ng,0}$ and $\mcg_{\Ng,1}$. These representations contain much of the 
information encoded in $\DGN$, but are easier to deal with. 
The multi-commutator map $\tilde\mu_\Ng$ introduced in \eqref{tildemu} intertwines 
the conjugation action, hence it maps every $2\Ng$-conjugacy class surjectively 
onto a single conjugacy class. There is thus an induced map
  \be
  \begin{array}{rcl}
  \mu_\Ng : \quad C^{2\Ng}_G &\!\!\longrightarrow\!\!& C_G \,,
  \\[2pt]
  d &\!\!\longmapsto\!\!& \tilde\mu_\Ng(d)  
  \eear
  \label{eq:muN}
  \ee
from $2\Ng$-conjugacy classes to ordinary conjugacy classes.
For each $2\Ng$-conjugacy class $d\iN C^{2\Ng}_G$ we define an element 
$\gamma_d\iN (\DGa)^{\otimes \Ng}$ as the sum of dual basis elements labeled by
elements in $d$,
  \be
  \gamma_d := \sum_{(\bh|\by)\in d}\! \d \bh \by \,.
  \label{def:gamma_d}
  \ee

The subset $\com_G \eq [G,G]$ of commutators in $G$ is closed under conjugation, 
and thus is a union of conjugacy classes. We denote the collection of these
particular conjugacy classes by $C'_G \,{\subseteq}\, C_G$, so that
  \be
  \com_G = \bigcup_{c\in C'_G}\! c \,.
  \label{com=cupc}
  \ee
By definition we have $\com_G \eq \im(\tilde\mu_1)$ and hence $C'_G \eq \im(\mu_1)$. 

Denote by $V_0 \,{:=}\, (\Bbbk,\varepsilon)$ the one-dimensional trivial $\DG$-module, 
i.e.\ the tensor unit of $\DG\Mod$. We can identify elements in $\Hom_{\DG\Mods}\big(
V_0,(\DGa)^{\otimes \Ng}\big)$ with their images in $(\DGa)^{\otimes \Ng}$; then we have

\begin{thm}\label{thm:InvBasis}
{\rm (i)} The set
  \be
  \Gamm := \{ \gamma_d \,|\, d\iN \mu_\Ng\inv(c_e) \}
  \ee
is a basis of $\,\Hom_{\DG\Mods}\big(V_0,(\DGa)^{\otimes \Ng}\big)$.
\\[4pt]
{\rm (ii)} 
Each of the generators $T\coa^{(i)}$, $S\coa^{(i)}$, $U\coa^{(i)}$, $L\coa^{(i)}$
and $M\coa^{(i)}$ of $\mcg_\Ng$ acts on $\Gamm$ by a permutation.
\end{thm}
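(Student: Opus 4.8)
The plan is to deduce (i) by identifying the morphism space with a space of $\DG$-invariants and then reading off that space from the explicit action, and to deduce (ii) essentially for free from Lemma \ref{lem:endoproperties}. Since $V_0$ is the tensor unit of $\DG\Mod$, a morphism $V_0\To M$ is the same datum as a $\DG$-invariant vector in $M:=(\DGa)^{\otimes N}$, so $\Hom_{\DG\Mods}(V_0,M)\Cong M^{\DG}$. To compute $M^{\DG}$ I would use the explicit $\DG$-action from Appendix \ref{sec:DG} together with the standard splitting of the Drinfeld double into its group part $\ko G$ and its function part $\ko^G$: in the realization of $\DG\Mod$ as $G$-equivariant sheaves on $G$, the idempotents of $\ko^G$ equip every module with a $G$-grading, while $\ko G$ acts by conjugation and permutes the homogeneous components. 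Invariance under $\DG$ is then equivalent to invariance under the augmentation ideal of each part, which yields two independent conditions.

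On the group part, the action on $(\DGa)^{\otimes N}$ restricts to conjugation of the labels $(\bh|\by)$, so invariance of a vector $\sum c_{(\bh|\by)}\,\d\bh\by$ forces its coefficients to be constant along each $2N$-conjugacy class $d\iN C^{2N}_G$. On the function part, each idempotent $\delta_g\iN\ko^G$ acts diagonally on $\d\bh\by$ with eigenvalue $1$ exactly when the grade of $\d\bh\by$ equals $g$; since grades multiply across tensor factors and the grade of a single $\d h y$ is the commutator $[h,y]$ (as read off from the twist \eqref{eq:DGTheta}), the grade of $\d\bh\by$ is the total multi-commutator $\tilde\mu_N(\bh|\by)$. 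Because $\eps(\delta_g)\eq\delta_{g,e}$, invariance under $\ko^G$ forces $c_{(\bh|\by)}\eq 0$ unless $\tilde\mu_N(\bh|\by)\eq e$. Combining the two conditions, $M^{\DG}$ is spanned precisely by the sums $\gamma_d$ of \eqref{def:gamma_d} over those $2N$-conjugacy classes with $\mu_N(d)\eq c_e$, i.e.\ with $d\iN\mu_N\inv(c_e)$ in the notation of \eqref{eq:muN}. Distinct $\gamma_d$ have disjoint supports in the basis $\{\d\bh\by\}$ and are therefore linearly independent, so $\Gamm$ is a basis and (i) follows.

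Part (ii) is then almost immediate. Each generator is a morphism in $\DG\Mod$, so post-composition with it preserves $\Hom_{\DG\Mods}(V_0,M)\eq M^{\DG}$ and hence restricts to the span of $\Gamm$. By Lemma \ref{lem:endoproperties}(i) the generator permutes the basis vectors $\d\bh\by$; by (ii) this permutation is equivariant for conjugation, so it carries $2N$-conjugacy classes to $2N$-conjugacy classes; and by (iii) it preserves the value of $\tilde\mu_N$, hence maps the fiber $\mu_N\inv(c_e)$ to itself. Consequently it sends each $\gamma_d$ with $d\iN\mu_N\inv(c_e)$ to some $\gamma_{d'}$ with $d'\iN\mu_N\inv(c_e)$, i.e.\ it permutes $\Gamm$.

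The only genuinely structural input, and thus the step I expect to be the main obstacle, is the claim used in the second paragraph that the function part of $\DG$ acts on $(\DGa)^{\otimes N}$ through the grading whose value on $\d\bh\by$ is $\tilde\mu_N(\bh|\by)$. This rests on two facts to be extracted from Appendix \ref{sec:DG}: that the grade of a single generator $\d h y$ of $\DGa$ is the commutator $[h,y]$, and that grades multiply under the tensor product of the Hopf algebra $\DGa$ in $\DG\Mod$. Once these are checked directly from the coend action and the multiplication of $\DGa$, the invariance analysis and all of part (ii) reduce to the bookkeeping above.
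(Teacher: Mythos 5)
Your proof is correct, and it arrives at exactly the paper's two conditions --- support on $\tilde\mu_N\inv(e)$ and constancy along $2N$-conjugacy classes --- but by a differently organized route for part (i); part (ii) coincides with the paper's argument verbatim (Lemma \ref{lem:endoproperties} plus the fact that post-composition with a $\DG$-morphism preserves $\Hom_{\DG\Mods}\big(V_0,(\DGa)^{\otimes N}\big)$). For part (i) the paper makes a single direct computation: it expands the iterated coproduct of $\b gx$ across the $N$ tensor factors and obtains $\b gx \cdo \d\bh\by \eq \del{g^x,\tilde\mu_N(\bh|\by)}\,\d{{}^x\bh}{{}^x\by}$, from which both conditions drop out simultaneously. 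You instead exploit the smash-product structure of the double: $\DG$ is generated by its function part $\{\b ge\}_{g\in G}$, spanning a subalgebra isomorphic to $\ko^G$, and its group part $\{\sum_{g}\b gx\}_{x\in G}$, spanning a subalgebra isomorphic to $\ko[G]$ (indeed $\b gx \eq \b ge \cdot \sum_h \b hx$, and since $\eps$ is an algebra map, invariance under the two subalgebras implies invariance under all of $\DG$); the function part then yields the grading condition and the group part the conjugation condition. What this buys is conceptual transparency --- the multi-commutator appears because grades multiply under tensor product, which is nothing but the coproduct $\Delta(\b ge) \eq \sum_h \b he \oti \b{h\inv g\,}e$ from \eqref{DGmorphsinbas} --- while the paper's computation is more self-contained, needing only \eqref{eq:DGcoa} and \eqref{DGmorphsinbas} in one stroke. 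One small but genuine slip in your write-up: the grade of $\d hy$ cannot be ``read off from the twist'' \eqref{eq:DGTheta}, because the twist acts by conjugation with the grade and is therefore blind to central grades (e.g.\ whenever $[h,y] \iN Z(G)$, in particular for abelian $G$); the grade must instead be read from the coend action \eqref{eq:DGcoa} at $x \eq e$, namely $\b ge \cdo \d hy \eq \del{g,[h,y]}\,\d hy$. Since this is precisely the verification you defer to in your final paragraph, the step you flagged as the main obstacle closes immediately, and the rest of your argument stands.
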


\begin{proof} 
(i)\,
We want to characterize the elements $\beta\iN (\DGa)^{\otimes \Ng}$ that satisfy 
$\b g x \cdo \beta \eq \delta_{g,e}\, \beta$. With the ansatz
  \be
  \beta = \sum_{(\bh|\by)\in G^{\times 2\Ng}} \! \alpha(\bh|\by)\, \d\bh\by
  \ee
we have
  \be
  \begin{array}{ll}
  \b g x\cdo \beta \!\! & \dsty
  = \sum_{(\bh|\by)\in G^{\times 2\Ng}} \sum_{k_1,\ldots, k_{\Ng-1}\in G}\!\! \alpha(\bh|\by)\,
  \b {k_1}x\cdo \d{h_1}{y_1}\oti \b{k_1\inv k_2}x\cdo \d{h_2}{y_2} \oti\cdots
  \\[-6pt] &
  \hspace*{20em} \cdots \oti \b{k_{\Ng-1}\inv g}{x}\cdo \d{h_\Ng}{y_\Ng}
  \\[2pt] &\dsty
  = \sum_{(\bh|\by)\in G^{\times 2\Ng}}\!\! \alpha(\bh|\by)\,
  \del{g^x,\tilde\mu_\Ng(\bh|\by)}\, \d{{}^x\bh}{{}^x\by} \,.
  \end{array}
  \ee
Thus we need $\alpha(\bh|\by) \eq 0$ unless $\tilde\mu_\Ng(\bh|\by) \eq e$, 
and $\alpha(\bh^x|\by^x) \eq \alpha(\bh|\by)$ for all $x\iN G$ and all
$(\bh|\by)\iN G^{\times 2\Ng}$. Conversely, for every function $\alpha$ obeying these 
conditions we get an element $\beta$ satisfying $\b g x \cdo \beta \eq \delta_{g,e}\, \beta$. 
Thus every such $\beta$ is a linear combination of elements $\gamma_d$ for 
$d\iN \mu_\Ng\inv(c_e)$. Since these are linearly independent, $\Gamm$ indeed forms a basis. 
\\[2pt]
(ii)\,
By Lemma \ref{lem:endoproperties}, every generator of $\mcg_{\Ng,1}$ permutes basis 
elements $\d \bh \by$ in a way such that it preserves $\tilde\mu_\Ng$, and thus preserves the 
subset $\{\d \bh\by \,|\, (\bh|\by)\iN \tilde\mu_\Ng\inv(e)\}$. Further, the $\mcg_{\Ng,1}$-generators
are $G$-equivariant, so that for any $d\iN \mu_\Ng\inv(c_e)$
they map bijectively the basis elements $\d \bh \by$ satisfying 
$(\bh|\by)\iN d$ to $\d \bh \by$ satisfying $(\bh|\by)\iN d'$ for some $d'\iN \mu_\Ng\inv(c_e)$.
It follows that each generator indeed permutes the elements of the set $\Gamm$.
\end{proof}

The proof also shows that for \emph{any} conjugacy class $c\iN C_G$ the set 
$\{\gamma_d \,|\, d\iN \mu_\Ng\inv(c)\}$ 
is permuted by the generators of $\mcg_{\Ng,1}$. It is therefore natural to ask whether the 
subspace of $(\DGa)^{\otimes \Ng}$ spanned by such elements $\gamma_d$ has some relation to 
quantum representations as well. To answer this question,
consider for a given  $2\Ng$-conjugacy class $d\iN C^{2\Ng}_G$ the subspace 
  \be
  V^{(d)} := \mathrm{span}_\Bbbk\{\d \bh \by \,|\, (\bh|\by)\iN d\} \, \subset (\DGa)^{\otimes \Ng}
  \label{eq:DCsubmodule}
  \ee
of $(\DGa)^{\otimes \Ng}$. The $\DG$-action
$\b g x\cdo \d \bh \by \eq \del{g^x,\tilde\mu_\Ng(\bh|\by)}\,\d{{}^x\bh}{{}^x\by} $
shows that the subspace $V^{(d)}$ is in fact a $\DG$-submodule of $(\DGa)^{\otimes \Ng}$. Moreover, 
if $\mu_\Ng(d) \eq c$, then $\b g x$ annihilates $V^{(d)}$ unless $g\iN c$. It follows that
for any simple \DG-module $V_{(c,\alpha)}$ (as described in Appendix \ref{ssec:DGmod}) we have
  \be
  \Hom_{\DG\Mods}(V_{(c,\alpha)},V^{(d)}) = 0 \qquad{\rm unless}\quad \mu_1(d)=c \,.
  \ee
Now for $c\iN C_G$, define
  \be
  (\DGa)^{\otimes \Ng}_{(c)} := \bigoplus_{\scriptstyle d\in C^{2\Ng}_G\atop\scriptstyle \mu_\Ng(d)=c} \!\! V^{(d)} .
  \ee
Then by the discussion above we have

\begin{prop}
The permutation representation of $\mcg_{\Ng,1}$ on $\DGN$ decomposes according to
  \be
  \End_{\DG\Mods}\big((\DGa)^{\otimes \Ng}\big)
  = \bigoplus_{c\in C_G}\End_{\DG\Mods}\big((\DGa)^{\otimes \Ng}_{(c)}\big)
  \ee
into sub-representations.
\end{prop}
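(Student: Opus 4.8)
The plan is to show that the decomposition of the full endomorphism space follows from a single structural fact: the generators $T\coa^{(i)}$, $S\coa^{(i)}$, $U\coa^{(i)}$, $L\coa^{(i)}$ and $M\coa^{(i)}$ preserve the value of the multi-commutator $\tilde\mu_N$ on basis elements. This is precisely Lemma \ref{lem:endoproperties}\,(iii), so the hard conceptual work has already been done; what remains is to package it correctly.

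First I would observe that as a vector space $\End_{\DG\Mods}\big((\DGa)^{\otimes N}\big)$ is spanned by the endomorphisms that permute the basis elements $\d\bh\by$ (by Lemma \ref{lem:endoproperties}\,(i)), and that these permutations are generated under composition by the $\mcg_{N,1}$-generators together with the distinguished element $\id_{(\DGa)^{\otimes N}}$ (Remark \ref{rem:orbit}). Next I would use the discussion preceding the statement: each subspace $V^{(d)}$ defined in \eqref{eq:DCsubmodule} is a $\DG$-submodule, and the direct-sum decomposition
  \be
  (\DGa)^{\otimes N} = \bigoplus_{c\in C_G}\ \bigoplus_{\scriptstyle d\in C^{2N}_G\atop\scriptstyle \mu_N(d)=c} V^{(d)}
  = \bigoplus_{c\in C_G} (\DGa)^{\otimes N}_{(c)}
  \ee
simply sorts the basis $\{\d\bh\by\}$ according to the value $\mu_N(d)$ of the conjugacy class on which each lies. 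The key step is then to verify that every $\mcg_{N,1}$-generator maps each summand $(\DGa)^{\otimes N}_{(c)}$ into itself. By Lemma \ref{lem:endoproperties}\,(iii) a generator sends a basis element $\d\bh\by$ with $(\bh|\by)\iN d$ to a basis element $\d{\bh'}{\by'}$ with the \emph{same} multi-commutator, hence lying in some $d'$ with $\mu_N(d')\eq\mu_N(d)$; thus the generator preserves each $(\DGa)^{\otimes N}_{(c)}$ as claimed.

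From this invariance the decomposition of the representation is immediate: an endomorphism of $(\DGa)^{\otimes N}$ that respects the direct-sum splitting into the $\mcg_{N,1}$-invariant submodules $(\DGa)^{\otimes N}_{(c)}$ is block-diagonal, so restriction to each block gives
  \be
  \End_{\DG\Mods}\big((\DGa)^{\otimes N}\big) = \bigoplus_{c\in C_G}\End_{\DG\Mods}\big((\DGa)^{\otimes N}_{(c)}\big),
  \ee
and because the group action permutes basis elements within a fixed value of $c$, each summand is itself a $\mcg_{N,1}$-sub-representation of $\DGN$. I expect no serious obstacle here; the only point requiring a little care is to make sure that the right-hand side really is a direct sum of \emph{sub-representations} and not merely a vector-space decomposition — that is, that no generator mixes distinct blocks. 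This is exactly what Lemma \ref{lem:endoproperties}\,(iii) guarantees, so the argument reduces to invoking that lemma together with the submodule structure already established. The bookkeeping step of confirming that every permutation word (not just the individual generators) preserves the blocks is routine, since the block-preserving permutations form a subgroup under composition.
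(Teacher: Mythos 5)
Your argument correctly establishes one half of the statement---that each block $(\DGa)^{\otimes N}_{(c)}$ is invariant under the five generators, via Lemma \ref{lem:endoproperties}\,(iii)---but it does not prove the displayed equality, and this is a genuine gap rather than routine bookkeeping. For a direct sum of $\DG$-modules $M=\bigoplus_{c}M_{c}$ one has
  \be
  \End_{\DG\Mods}(M)=\bigoplus_{c,c'}\Hom_{\DG\Mods}(M_{c},M_{c'}) \,,
  \ee
so the asserted equality $\End_{\DG\Mods}\big((\DGa)^{\otimes N}\big)=\bigoplus_{c}\End_{\DG\Mods}\big((\DGa)^{\otimes N}_{(c)}\big)$ is precisely the claim that $\Hom_{\DG\Mods}\big((\DGa)^{\otimes N}_{(c)},(\DGa)^{\otimes N}_{(c')}\big)$ vanishes whenever $c\nE c'$. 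Your sentence ``an endomorphism of $(\DGa)^{\otimes N}$ that respects the direct-sum splitting \dots\ is block-diagonal'' is circular: whether an arbitrary $\DG$-module endomorphism respects the splitting is exactly what has to be shown, and invariance of the blocks under the mapping class group generators has no bearing on it, since the equality concerns the full endomorphism algebra and not only the subspace $\DGN$ spanned by words in the generators. For the same reason your opening claim, that $\End_{\DG\Mods}\big((\DGa)^{\otimes N}\big)$ is spanned by basis-permuting endomorphisms, is unjustified (Lemma \ref{lem:endoproperties}\,(i) concerns only the five generators) and in fact false: the projection onto a single block is a $\DG$-module endomorphism that is not a permutation of the basis.

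The missing ingredient is the support property of the $\DG$-action, which is what the discussion preceding the Proposition in the paper supplies. From $\b g x\cdo\d \bh \by=\del{g^x,\tilde\mu_N(\bh|\by)}\,\d{{}^x\bh}{{}^x\by}$ one reads off that $\b g e$ acts as zero on $V^{(d)}$ unless $g\iN\mu_N(d)$, and that $\sum_{g\in c}\b g e$ acts as the identity on $(\DGa)^{\otimes N}_{(c)}$. Hence any morphism $f\colon(\DGa)^{\otimes N}_{(c)}\To(\DGa)^{\otimes N}_{(c')}$ with $c'\nE c$ satisfies, for every $v$ in its source,
  \be
  f(v)=f\Big(\sum_{g\in c}\b g e\cdo v\Big)=\sum_{g\in c}\b g e\cdo f(v)=0 \,,
  \ee
which is the required Hom-vanishing; equivalently, in the semisimple case, every simple constituent of $(\DGa)^{\otimes N}_{(c)}$ is of the form $V_{(c,\alpha)}$, so distinct blocks share no constituents. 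Once this is in place, your use of Lemma \ref{lem:endoproperties}\,(iii) does show that post-composition with the generators preserves each summand, so the decomposition is indeed into sub-representations; that part of your proposal matches the paper's reasoning.
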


Next denote, for $g \iN G$, by $\DG^{(g)}$ the subalgebra of $\DG$ that is spanned by 
the basis elements $\{\b g x \,|\, x\iN Z_g\}$ (see also the first paragraph of 
Appendix \ref{ssec:DGmod}); we have $\DG^{(g)} \,{\cong}\, \ko[Z_g]$. Let $d\iN C^{2\Ng}_G$ 
satisfy $\mu_\Ng(d) \eq c$, and define the subspace 
  \be
  V^{(d,\gc)} := \mathrm{span}_\ko \{ \d{\bh}{\by} \,|\, (\bh|\by)\iN d\ \,\mathrm{and}\;
  \tilde\mu_\Ng(\bh|\by) \eq \gc\} \,.
  \ee
Note that if $(\bh|\by)\iN d$ satisfies $\tilde\mu_\Ng(\bh|\by) \eq \gc$, then every 
element of $d$ with that property is of the form $({}^x\bh|{}^x\by)$ for some 
$x\iN Z_{\gc}$. In fact, $V^{(d,\gc)}$ is a permutation representation
of $Z_{\gc}$, with the underlying $Z_{\gc}$-set consisting of the single orbit 
$\{({}^x\bh|{}^x\by) \,|\, x\iN Z_{\gc}\}$. It follows that $V^{(d)}$ is the 
$\DG$-module \eqref{eq:IndMod} induced from $V^{(d,\gc)}$, i.e.
  \be
  V^{(d)} = \DG \otimes_{\DG^{(\gc)}} V^{(d,\gc)} .
  \ee
Any permutation representation contains a trivial sub-representation; in the case of 
$V^{(d,\gc)}$, the vector 
  \be
  \gamma_d^{\gc} := \sum_{\scriptstyle (\bh|\by)\in d\atop\scriptstyle \tilde\mu_\Ng(\bh|\by)=\gc}
  \d{\bh}{\by}
  \ee 
is $Z_{\gc}$-invariant. Denote the $\DG$-module that is obtained from induction on this 
trivial module by $V_{(c)}^{(d)}$. This  $\DG$-module is obviously simple and is equivalent 
to $V_{(c,\idscs)}$. Furthermore, $V_{(c)}^{(d)}$ is cyclic, and one particular cyclic 
vector is $\gamma_d \eq \sum_{\gc\in d}\gamma_d^{\gc}$ as defined in \eqref{def:gamma_d}. 
Since every module in the set $\{V_{(c)}^{(d)} \,|\, d\iN C^{2\Ng}_G\}$ is induced from 
a one-dimensional $\DG^{(\gc)}$-mo\-du\-le, it follows that the mapping class group acts 
on this set, and that this action is completely determined by the mapping class action 
on the vectors $\{\gamma_d^{\gc} \,|\, \mu_\Ng(d) \eq c\}$. Equivalently we can study 
the action of $\mcg_{\Ng,1}$ on the set $\{\gamma_d \,|\, \mu_\Ng(d) \eq c\}$. 
We have thus arrived at

\begin{thm}\label{thm:notInvBasis}
For any $c\iN C_G$, $\mcg_{\Ng,1}$ acts on the set $\{\gamma_d \,|\, d\iN \mu_\Ng\inv(c)\}$. 
  \\
The resulting permutation representation is equivalent to a sub-representation of
the quantum representation on $\Hom_{\DG\Mods}\big(V_{(c,\idscs)},(\DGa)^{\otimes \Ng}\big)$.
\end{thm}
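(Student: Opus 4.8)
The first assertion is immediate from the proof of Theorem~\ref{thm:InvBasis}(ii): by Lemma~\ref{lem:endoproperties} each generator of $\mcg_{N,1}$ permutes the basis vectors $\d\bh\by$, preserves the multi-commutator $\tilde\mu_N$, and is $G$-equivariant, so it descends to a permutation of the $2N$-conjugacy classes that fixes each fibre $\mu_N\inv(c)$ and hence sends $\gamma_d\Mapsto\gamma_{d'}$ for some $d'\iN\mu_N\inv(c)$.

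For the second assertion the plan is to write down an explicit injective intertwiner into the quantum representation. Recall from the discussion preceding the theorem that, for each $d\iN\mu_N\inv(c)$, the submodule $V_{(c)}^{(d)}\,{\subseteq}\,(\DGa)^{\otimes N}$ was obtained by inducing the trivial $\DG^{(\gc)}$-module spanned by the $Z_{\gc}$-invariant vector $\gamma_d^{\gc}$, and was thereby identified with $V_{(c,\idscs)}$. Fixing a cyclic generator $w_0$ of $V_{(c,\idscs)}$, I would let $\phi_d\iN\Hom_{\DG\Mods}\big(V_{(c,\idscs)},(\DGa)^{\otimes N}\big)$ be this identification followed by the inclusion $V_{(c)}^{(d)}\hookrightarrow(\DGa)^{\otimes N}$, normalised so that $\phi_d(w_0)\eq\gamma_d^{\gc}$. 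Existence is supplied by the construction just recalled, and since $w_0$ generates $V_{(c,\idscs)}$ any $\DG$-morphism out of it is determined by the image of $w_0$; thus $\phi_d$ is the unique such morphism.

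Two points then remain. Linear independence of $\{\phi_d\}_{d\in\mu_N\inv(c)}$ is clear, because $\phi_d(w_0)\eq\gamma_d^{\gc}$ and the $\gamma_d^{\gc}$ have pairwise disjoint supports among the $\d\bh\by$. For equivariance, let $\Phi$ be the endomorphism of $(\DGa)^{\otimes N}$ through which a given generator of $\mcg_{N,1}$ acts by post-composition. Then $\Phi\cir\phi_d\colon V_{(c,\idscs)}\To(\DGa)^{\otimes N}$ is a $\DG$-morphism sending $w_0$ to $\Phi(\gamma_d^{\gc})$. Since $\Phi$ permutes the $\d\bh\by$, preserves $\tilde\mu_N$, and is $G$-equivariant, it carries the single $Z_{\gc}$-orbit $\{(\bh|\by)\iN d\mid\tilde\mu_N(\bh|\by)\eq\gc\}$ onto a full $Z_{\gc}$-orbit inside $\Phi(d)$, namely the one with $\tilde\mu_N\eq\gc$; hence $\Phi(\gamma_d^{\gc})\eq\gamma_{\Phi(d)}^{\gc}$. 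By the uniqueness just noted this yields $\Phi\cir\phi_d\eq\phi_{\Phi(d)}$, so $d\Mapsto\phi_d$ intertwines the permutation representation on $\{\gamma_d\mid d\iN\mu_N\inv(c)\}$ with the quantum representation, and its image is the asserted sub-representation.

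The step demanding the most care is the identity $\Phi(\gamma_d^{\gc})\eq\gamma_{\Phi(d)}^{\gc}$, which compares the action on the \emph{partial} sums $\gamma_d^{\gc}$ (the $\tilde\mu_N\eq\gc$ slice of $\gamma_d$) with the permutation of $\mu_N\inv(c)$ read off from the full sums $\gamma_d$. Here properties (i)--(iii) of Lemma~\ref{lem:endoproperties} must be used in concert: preservation of $\tilde\mu_N$ confines $\Phi$ to the slice $\tilde\mu_N\inv(\gc)$, while $G$-equivariance, restricted to $Z_{\gc}$, sends the transitive $Z_{\gc}$-orbit comprising this slice of $d$ onto the full $Z_{\gc}$-orbit comprising the corresponding slice of $\Phi(d)$. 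A characteristic-free count of the $Z_{\gc}$-orbits on the slice $\tilde\mu_N\inv(\gc)$ via Frobenius reciprocity in fact identifies the image with all of $\Hom_{\DG\Mods}\big(V_{(c,\idscs)},(\DGa)^{\otimes N}\big)$, so the containment is an equality; only the weaker sub-representation statement is needed here.
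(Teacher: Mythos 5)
Your proposal is correct and follows essentially the same route as the paper: there the theorem is stated as the conclusion of the preceding discussion of the induced submodules $V_{(c)}^{(d)} \,{\cong}\, V_{(c,\idscs)}$ generated by the $Z_{\gc}$-invariant vectors $\gamma_d^{\gc}$, and your intertwiners $\phi_d$ simply make that identification explicit, with the key equivariance step $\Phi(\gamma_d^{\gc}) \eq \gamma_{\Phi(d)}^{\gc}$ resting on Lemma \ref{lem:endoproperties} exactly as in the paper. Your closing remark that Frobenius reciprocity upgrades the containment to an equality of representations is a correct strengthening that the paper does not claim.
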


Moreover, we have

\begin{prop}
The action of $\,\mcg_{\Ng,1}$ on $\{\gamma_d \,|\, d\iN \mu_\Ng\inv(c)\}$
factors through to an action of $\,\mcg_{\Ng,0}$.
\end{prop}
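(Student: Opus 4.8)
The plan is to exploit the presentation of $\mcg_{N,0}$ as the quotient of $\mcg_{N,1}$ by the single relation $T_\partial \eq \id$, where $T_\partial$ is the Dehn twist along a curve isotopic to the boundary. The generators of $\mcg_{N,1}$ already act on the set $\{\gamma_d \,|\, d\iN\mu_N\inv(c)\}$ by the discussion preceding Theorem \ref{thm:notInvBasis}, so to show that this action descends to $\mcg_{N,0}$ it suffices to verify that $T_\partial$ acts as the identity permutation on the vectors $\gamma_d$. Then the extra relator of $\mcg_{N,0}$ is respected and the $\mcg_{N,1}$-action factors through.

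First I would identify the operator representing $T_\partial$. In Lyubashenko's construction the Dehn twist along a boundary-parallel curve is represented, via post-composition, by the ribbon twist $\theta_{(\DGa)^{\otimes N}}$ of the coend object (up to an inversion that is immaterial here). This is the automorphism \eqref{eq:DGTheta}, sending $\d\bh\by$ to $\d{{}^{w}\bh}{{}^{w}\by}$ with $w \,{:=}\, \tilde\mu_N(\bh|\by)$.

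The key step is then immediate. Since $w\iN G$, left conjugation by $w$ sends $(\bh|\by)\iN d$ to $({}^w\bh|{}^w\by)$, which lies in the same $G$-orbit $d$; thus $\theta_{(\DGa)^{\otimes N}}$ maps each basis vector $\d\bh\by$ with $(\bh|\by)\iN d$ to another such basis vector, and being invertible it permutes this finite set of basis vectors. Since $\gamma_d \eq \sum_{(\bh|\by)\in d}\d\bh\by$ is the sum over the entire orbit, it is fixed, $\theta_{(\DGa)^{\otimes N}}(\gamma_d) \eq \gamma_d$. Hence $T_\partial$ acts trivially on the $\gamma_d$ and the $\mcg_{N,1}$-action factors through $\mcg_{N,0}$. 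Note that this argument is uniform in $c$, so it covers all conjugacy classes at once, not merely $c_e$.

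The main obstacle is the structural identification of $T_\partial$ with the ribbon twist in the second paragraph; everything after it is routine. If one prefers to avoid the general coend dictionary, the same conclusion follows from naturality of the twist: on $\Hom_{\DG\Mods}(V_{(c,\idscs)},(\DGa)^{\otimes N})$ post-composition with $\theta_{(\DGa)^{\otimes N}}$ equals pre-composition with the twist $\theta_{V_{(c,\idscs)}}$ of the simple source module (recall that $\gamma_d$ is a cyclic vector realizing $V_{(c,\idscs)}$), and that twist is the scalar $1$ because $V_{(c,\idscs)}$ carries the trivial character of the centralizer $Z_{\gc}$. Alternatively one could expand the explicit word for $T_\partial$ in the operators $T\coa^{(i)}$, $U\coa^{(i)}$ and $M\coa^{(i)}$ and check directly that it reduces to the twist, but this is more laborious and ultimately unnecessary.
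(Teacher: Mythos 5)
Your proof is correct, and its main line differs from the paper's in a worthwhile way, so let me compare. The paper invokes the same structural input about $T_\partial$ but on the other side of naturality: citing \cite[Sect.\,4.5]{lyub6}, it represents the boundary Dehn twist by pre-composition with the twist of the \emph{source} module $V_{(c,\idscs)}$, i.e.\ by multiplication with the twist phase $\theta_{(c,\idscs)}$, and then \eqref{eq:Vtwist} gives $\theta_{(c,\idscs)} \eq 1$ because the character $\idscs$ is trivial. Your primary argument instead computes on the \emph{target}: the explicit coend twist \eqref{eq:DGTheta} conjugates the label $(\bh|\by)$ by $\tilde\mu_N(\bh|\by)$, hence maps basis vectors $\d\bh\by$ with $(\bh|\by)\iN d$ to basis vectors labeled by the same $2N$-conjugacy class $d$, and therefore (being invertible) permutes them and fixes each orbit sum $\gamma_d$. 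What your route buys: it is purely combinatorial, uniform in $c$, and independent of the ground field, since it avoids the simple-module character theory behind \eqref{eq:Vtwist} (which the paper develops under the assumption that $\mathrm{char}(\Bbbk)$ does not divide $|G|$). What the paper's route buys: it is a one-line scalar computation, and it shows at once that $T_\partial$ acts as the identity on the whole space $\Hom_{\DG\Mods}\big(V_{(c,\idscs)},(\DGa)^{\otimes N}\big)$, not merely on the span of the $\gamma_d$. Note also that both arguments rest on the same unproved structural identification (boundary twist $=$ ribbon twist), which you correctly flag as the only non-routine ingredient; and the ``naturality'' alternative in your closing paragraph is in fact precisely the paper's own proof.
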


\begin{proof}
The Dehn twist along a simple closed curve isotopic to the boundary is
represented by the twist automorphism of the $\DG$-module that labels the boundary circle (see 
\cite[Sect.\,4.5]{lyub6}), and thus in the case at hand, by multiplication with the twist phase
$\theta_{(c,\idscs)}$. Now according to \eqref{eq:Vtwist} we have $\theta_{(c,\idscs)} \eq 1$.
It follows that the relevant quantum representations of $\mcg_{\Ng,1}$ indeed satisfy 
the relations of $\mcg_{\Ng,0}$.
\end{proof}

\begin{rem}\label{rem:orbit}
An analogue of Theorem \ref{thm:notInvBasis} is valid for quantum representations
that come from other factorizable ribbon Hopf algebras than $\DG$:
Any quantum representation is contained in the one
obtained by taking $X \eq L^{\otimes \Ng}$ for a suitable value $\Ng$ of the genus, in which 
case one deals with endomorphisms of $L^{\otimes \Ng}$. Moreover, for obtaining the orbit
structure of such a particular representation it suffices to consider the subspace that is generated from the
distinguished element $\id_{L^{\otimes \Ng}}$ in $\End_{H\Mods}(L^{\otimes \Ng})$.
\end{rem}

\medskip

Before continuing, let us verify that we reproduce the explicit form of the 
matrices $T$ and $S$ from \cite{kssb,cogR} which generate the representation of
$\mcg_{1,0} \,{\cong}\, \slz$ on $\Hom_{\DG\Mods}(V_0,\DGa)$. 
For simplicity we assume that $\Bbbk$ has characteristic zero, so that 
we can work with ordinary characters of $\DG$-modules.
For $\Ng \eq1$ Theorem \ref{thm:InvBasis} implies that $\Hom_{\DG\Mods}(V_0,\DGa)$ is
isomorphic to $C(\DG)$, the space of central forms as defined in \eqref{eq:CDG}, which 
has a canonical basis given by the simple $\DG$-characters. 
Isomorphism classes of simple $\DG$-modules are in bijection with pairs $(c,\alpha)$ 
with $c\iN C_G$ and $\alpha$ a simple character of $Z_{\gc}$ for a chosen representative 
$\gc$ of $c$ (see Appendix \ref{ssec:DGmod} for a description of the simple 
$\DG$-modules $V_{(c,\alpha)}$ and their characters $\chi_{(c,\alpha)}$).
Denote by $\Xi_{(c,\alpha)}\colon Z(Z_{\gc})\To \ko$ the central character of a simple 
$\ko[Z_{\gc}]$-module with character $\alpha$; then 
  \be
  \Xi_{(c,\alpha)}(x) = \frac{\alpha(x)}{\alpha(e)} \,.
  \ee
Combining formula \eqref{eq:DGT..} for $T\coa$ with the expression \eqref{eq:DGchar2} 
for $\chi_{(c,\alpha)}$
and the fact that $\gc$ is central in $Z_{\gc}$, it follows that $T\coa$ acts as
  \be
  T\coa :\quad \chi_{(c,\alpha)} \,\longmapsto\, \Xi_{(c,\alpha)}(\gc)\, \chi_{(c,\alpha)} \,.
  \ee
The matrix affording the $T\coa$-transformation on simple $\DG$-characters thus takes the form
  \be
  \big( T\coa \big)_{(c,\alpha),(c',\alpha')} = \del{(c,\alpha),(c',\alpha')}\, \Xi_{(c,\alpha)}(\gc) \,.
  \label{eq:ModTmatrixChar}
  \ee
Further, according to \eqref{eq:Vtwist} $\Xi_{(c,\alpha)}(\gc)$ equals the twist phase 
$\theta_{(c,\alpha)}$ of the simple module $V_{(c,\alpha)}$.
Next consider the matrix affording the $S\coa$-transformation of simple $\DG$-characters by
  \be
  S\coa :\quad \chi_{(c,\alpha)} \,\longmapsto \sum_{(c',\alpha')} \big( S\coa\big)_{(c,\alpha),(c',\alpha')}\,
  \chi_{(c',\alpha')} \,.
  \ee
Let us tensor this formula with $\chi_{(\overline{c'},\alpha')}$ and apply
the Hopf pairing according to \eqref{eq:HopfpairingChar}.
Then using again \eqref{eq:DGchar2} and \eqref{eq:DGT..},
as well as the formula \eqref{eq:DGaHopfpairing} for the Hopf pairing, we obtain
  \be
  \big(S\coa\big)_{(c,\alpha),(c',\alpha')} = \frac{1}{|Z_{\gc}|\,|Z_{g_{c'}}|}
  \sum_{k\in G \colon {}^kg_{c'}\in Z_{\gc},\,\, \gc^k\in Z_{g_{c'}}}  \!\!\!\!
  \alpha\big( {}^{k\,\imath_{c'}\inv}\!(g_{c'}\inv)\big)\, \alpha'\big( (\gc\inv)^k\big) 
  \label{eq:ModSmatrixChar}
  \ee
(here we use that $\mathrm{char}(\Bbbk)$ does not divide $|G|$)
with $\imath_c$ a group element satisfying $g_{\overline c}\inv \eq {}^{\imath_c\!}\gc^{}$
(see the text preceding Equation \eqref{def-imath}). The results \eqref{eq:ModTmatrixChar}
and \eqref{eq:ModSmatrixChar} coincide with the formulas (2.14) and (2.12) of \cite{cogR}.

In the basis $\Gamm \eq \{\gamma_d\}_{d\in \mu_1\inv(c_e)}$
of $C(\DG)$ that was introduced in Theorem \ref{thm:InvBasis} the $T\coa$- and
$S\coa$-transformations read
  \be
  T\coa:\quad \gamma_{d_{(g|x)}} \mapsto \gamma_{d_{(g|g\inv x)}} \qquand
  S\coa:\quad \gamma_{d_{(g|x)}} \mapsto \gamma_{d_{(x\inv|g)}} \,,
 \label{eq:ModSmatrixDC..}
  \ee
hence the matrices representing these transformations are permutation matrices, 
as expected. An immediate consequence of the matrices being of this form is 
that the order of $T\coa$ coincides with the exponent of $G$. Note that even though 
the results \eqref{eq:ModTmatrixChar} and \eqref{eq:ModSmatrixChar} assume a restriction 
on the characteristic of $\ko$, the formulas \eqref{eq:ModSmatrixDC..} hold for 
an arbitrary ground field $\ko$.


\section{Mapping class group actions on \texorpdfstring{\boldmath $2\Ng$}{2\Ng}-conjugacy 
         classes}\label{sec:mcg2Nconj}

Henceforth we focus on the subset of quantum representations which we
established in Theorems \ref{thm:InvBasis} and \ref{thm:notInvBasis}. All
of these representations satisfy the relations of $\mcg_{\Ng,0}$; for simplicity we will 
refer to them as representations of $\mcg_{\Ng}$. Moreover, all of 
them are permutation representations obtained by linearizing $\mcg_\Ng$-actions on 
the sets $C^{2\Ng}_G$ of $2\Ng$-conjugacy classes. Therefore we now turn to investigating 
the action of mapping class groups on $2\Ng$-conjugacy classes. 
We have already established, in Lemma \ref{lem:endoproperties} as well as in Theorems 
\ref{thm:InvBasis} and \ref{thm:notInvBasis}, that the function 
$\mu_\Ng\colon C^{2\Ng}_G\To C_G$ \eqref{eq:muN} is $\mcg_\Ng$-invariant, so there is 
at least one orbit, namely $\mu_\Ng\inv(c)$, for each conjugacy class $c\iN \im(\mu_\Ng)$.

\subsection{General properties}

For brevity we re-express the formulas \eqref{eq:DGT..} and \eqref{eq:DGLM} as permutations of the label sets 
$G^{\times 2}$ and $C^{2}_G$, respectively of $G^{\times 2\Ng}$ and $C^{2\Ng}_G$.
Thus we write
  \be
  \bearll
  T_G: & (g|x) \,\longmapsto\, (g|g\inv x) \,, 
  \Nxl3
  S_G: & (g|x) \,\longmapsto\, (x\inv|g^x) \,,
  \Nxl3
  U_G: & (g|x) \,\longmapsto\, (xg|x) \,,
  \Nxl3
  L_G: & \big( (g|x),(h|y)\big)	\,\longmapsto\, \big( (g|({}^xh)g\inv x),({}^{(g^x)}h|(g^x)h\inv y) \big) \,,
  \Nxl3
  M_{G,m}: & \big( (g|x), (\bh|\by) \big) 
  \Nxl2
  & \quad \longmapsto\,
           \big( (g|x((g^x)\tilde\mu_m(\bh|\by))\inv), ({}^{(g^x)\tilde\mu_m(\bh|\by)}
           \bh|{}^{(g^x)\tilde\mu_m(\bh|\by)}\by) \big) \,,
  \eear
  \label{eq:GpermTS..M}
  \ee
where in the last formula $m$ takes values in $\{1,2,...\,,\Ng{-}1\}$.
When acting on $G^{\times 2\Ng}$ for some definite value of $\Ng$ we write $T_G^{(i)}$, 
$S_G^{(i)}$ and $U_G^{(i)}$, with $i \eq 1,2,...,, \Ng$, for the action of the respective 
permutations on the $(\Ng{-}i{+}1)$th pair of labels, i.e.
  \be
  X_G^{(i)} = \big( \id_{G^{\times 2(\Ng-i)}},X_G,\id_{G^{\times 2(i-1)}} \big)
  \label{X=UTS}
  \ee
for $X \eq T, S, U$. Analogously $L_G^{(k)}$ with $k \eq 1,2, ...\,,\Ng{-}1$ is given by
  \be
  L_G^{(k)} = \big( \id_{G^{\times 2(\Ng-k-1)}},L_G,\id_{G^{\times 2(k-1)}} \big) \,.
  \label{Y=OL}
  \ee
Finally, 
  \be
  M_G^{(k)} = \big( \id_{G^{\times 2(\Ng-k)}},M_{G,k-1}^{} \big)
  \label{Y=M}
  \ee
for $k\eq 2,3,...\,,\Ng$. 

\begin{rem}
The permutations $U_G^{(i)}$, $T_G^{(i)}$, and $L_G^{(k)}$ coincide with the generators of 
the $\mcg_{\Ng,0}$-action found in equations (3.12), (3.13) and (3.14) of \cite{bant4}.
\end{rem}

\medskip

Lemma \ref{lem:endoproperties} immediately implies that each of 
the maps \eqref{X=UTS}\,--\,\eqref{Y=M}
maps $2\Ng$-conjugacy classes bijectively to $2\Ng$-conjugacy classes, and therefore induces
a permutation of the set $C^{2\Ng}_G$ that preserves the map $\mu_\Ng \colon C^{2\Ng}_G\To C_G$. 
By abuse of notation we use the same symbols $T_G^{(i)}$ etc.\ for these permutations 
of $C^{2\Ng}_G$. We have shown that these permutations define a group homomorphism
  \be
  \sigma^\Ng_G:\quad \mcg_\Ng \rightarrow \mathrm{Sym}(C^{2\Ng}_G) \,,
  \label{sigmaGN}
  \ee
where $\mathrm{Sym}(X)$ denotes the group of permutations of a finite set $X$. Moreover, 
for each $c\iN C_G$ the subset $\mu_\Ng\inv(c) \,{\subset}\, C^{2\Ng}_G$ is 
$\mcg_\Ng$-invariant. Note, however, that the set $\mu_\Ng\inv(c)$ may be empty for 
some $c\iN C_G$; e.g.\ for $\Ng \eq 1$ it is non-empty iff $c\iN C_G'$.

The rest of this subsection is devoted to establish general features of $C^{2\Ng}_G$ as 
a $\mcg_\Ng$-set. Even though for each conjugacy class $c\iN \im(\mu_\Ng)$ there exists at 
least one orbit, namely the set $\mu_\Ng\inv(c)$, it may happen that the map $\mu_\Ng$ is 
a crude invariant, so $\mcg_\Ng$ does not necessarily act transitively on these orbits.

Let us begin by stating two easy results related to invariant subsets of $C^{2\Ng}_G$.
By construction every $2\Ng$-conjugacy class is a subset of the product of $2\Ng$ 
conjugacy classes. In particular there are precisely $|Z(G)|^{2\Ng}$ $2\Ng$-conjugacy 
classes consisting of a single element. Thus we have

\begin{prop}\label{prop:CenterSubgroupInv}~
\\[2pt]	
{\rm (i)}\, The set $\{d\iN C^{2\Ng}_G \,|\, d\,{\subseteq}\, Z(G)^{\times 2\Ng}\}$ 
is mapping class group invariant.
\\[2pt]	
{\rm (ii)}\, For any subgroup $H \,{\leq}\, G$, the set $C^{2\Ng}_{H\le G}$ consisting of
$2\Ng$-conjugacy classes with representatives in $H^{\times 2\Ng}$ is mapping class group invariant.
\\[2pt]
{\rm (iii)}\, If $H\,{<}\, G$ is a proper subgroup, then $C^{2\Ng}_{H\le G}\,{\subset}\, C^{2\Ng}_G$ 
is a proper subset.
\end{prop}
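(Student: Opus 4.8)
The plan is to prove (ii) directly from the explicit formulas, derive (i) as the special case $H \eq Z(G)$, and treat (iii) by producing an explicit class lying outside $C^{2N}_{H\le G}$. For (ii), I would read off the invariance from the permutation formulas \eqref{eq:GpermTS..M}. In each of the generators $T_G$, $S_G$, $U_G$, $L_G$ and $M_{G,m}$ every output coordinate is a word in the input coordinates built solely from multiplication, inversion and conjugation in $G$; for $M_{G,m}$ one uses in addition that each multi-commutator $\tilde\mu_m(\bh|\by) \eq \prod_i [h_i,y_i]$ is again such a word. A subgroup $H \le G$ is closed under all of these operations, so each generator maps $H^{\times 2N}$ into $H^{\times 2N}$. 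Hence if a $2N$-conjugacy class $d$ has a representative in $H^{\times 2N}$, so does its image under any generator, i.e.\ the permutation of $C^{2N}_G$ induced by that generator (well defined by the $G$-equivariance of Lemma \ref{lem:endoproperties}(ii)) carries $C^{2N}_{H\le G}$ into itself.

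To upgrade this forward-invariance to genuine invariance I would invoke finiteness: each generator acts as a bijection of the finite set $C^{2N}_G$ (Lemma \ref{lem:endoproperties}(i)) and sends the finite subset $C^{2N}_{H\le G}$ into itself, hence bijectively onto itself, so its inverse does too, and therefore all of $\mcg_N$ preserves $C^{2N}_{H\le G}$. Part (i) is then the case $H \eq Z(G)$: since $Z(G)^{\times 2N}$ is fixed pointwise under conjugation, any $2N$-conjugacy class that meets it is the singleton orbit of that point and hence is contained in $Z(G)^{\times 2N}$; thus $\{d \iN C^{2N}_G \,|\, d \subseteq Z(G)^{\times 2N}\}$ coincides with $C^{2N}_{Z(G)\le G}$ and is invariant by (ii).

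For (iii) the plan is to exhibit a single class with no representative in $H^{\times 2N}$. I would take the tuple $((g|e),(e|e),\ldots,(e|e))$ for a suitably chosen $g \iN G$; its $G$-orbit is $\{((g^k|e),(e|e),\ldots,(e|e)) \,|\, k\iN G\}$, which meets $H^{\times 2N}$ exactly when some conjugate $g^k$ lies in $H$. The main obstacle — in fact the only non-formal point in the whole proposition — is that one cannot simply take an arbitrary $g \notin H$: as $H$ need not be normal, the conjugacy class of such a $g$ may still meet $H$. What is required is an element lying in no conjugate of $H$, i.e.\ $g \notin \bigcup_{k\in G} kHk\inv$, and the existence of such $g$ is the classical fact that a finite group is not the union of the conjugates of a proper subgroup. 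I would settle this by the elementary count
\be
\Big| \bigcup_{k\in G} kHk\inv \Big|
\,\le\, 1 + [G:N_G(H)]\,(|H|\mi1)
\,\le\, 1 + [G:H]\,(|H|\mi1)
\,=\, |G| - ([G:H]\mi1)
\,<\, |G| \,,
\ee
using that every conjugate of $H$ contains $e$, that there are $[G:N_G(H)] \le [G:H]$ distinct conjugates, and that $[G:H] \ge 2$ for a proper subgroup. Any $g$ outside this union then yields a class in $C^{2N}_G \setminus C^{2N}_{H\le G}$, establishing (iii).
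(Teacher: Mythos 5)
Your proposal is correct and follows essentially the same route as the paper: the paper declares (i) and (ii) ``obvious'' (the verification being exactly your check that the generator formulas \eqref{eq:GpermTS..M} involve only multiplication, inversion and conjugation, hence preserve $H^{\times 2N}$), and for (iii) it invokes, without proof, precisely the fact you establish by your counting argument -- that a proper subgroup of a finite group cannot meet every conjugacy class, equivalently that $G$ is not the union of the conjugates of a proper subgroup. The only difference is one of detail: you supply the elementary count and the explicit witness class $\big((g|e),(e|e),\ldots,(e|e)\big)$, whereas the paper cites the classical fact and leaves the construction of $d$ implicit.
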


\begin{proof}
The statements (i) and (ii) are obvious. Claim {\rm (iii)} follows immediately from 
the fact that no proper subgroup $H \,{<}\, G$ of a finite group $G$ can intersect all conjugacy 
classes of $G$: there must exist some $d\iN C^{2\Ng}_G$ such that $d \,{\notin}\, C^{2\Ng}_{H\le G}$.
\end{proof}

Since the center $Z(G)$ is not empty, Proposition \ref{prop:CenterSubgroupInv}\,(i)
implies that for any non-trivial $G$ and any $\Ng$ the mapping class group action 
on $C^{2\Ng}_G$ has at least two orbits. But if $Z(G)$ is non-trivial, 
there is even more information to be obtained from the center.
Namely, let $Z(G)^{\times 2\Ng}$ act on $G^{\times 2\Ng}$ by left multiplication.
This action descends to an action on $2\Ng$-conjugacy classes:
  \be
  (\mathbf{a}|\mathbf{b}):\quad  d_{(\bh|\by)} \mapsto d_{(\mathbf{a}\bh|\mathbf{b}\by)}
  \ee
for $(\mathbf{a}|\mathbf{b})\iN Z(G)^{\times 2\Ng}$.
Note that the $Z(G)^{\times 2\Ng}$-action on $C^{2\Ng}_G$ preserves $\mu_\Ng$.
Now for any word $X$ in the mapping class group generators we have
  \be
   (\mathbf{a}|\mathbf{b})\circ X = X\circ X\inv\circ(\mathbf{a}|\mathbf{b})\circ X
  \equiv X\circ (\mathbf{a}|\mathbf{b})^X
  \ee
as permutations of $C^{2\Ng}_G$. In fact, conjugating the action of $Z(G)^{\times 2\Ng}$ by a mapping 
class is implemented by an automorphism of $Z(G)^{\times 2\Ng}$. A straightforward calculation gives
  \be
  \bearll
  (a|b)^{T_G} = (a|ab) \,,
  & \big((a|b),(c|d)\big)^{L_G} = \big((a|abc\inv),(c|a\inv cd)\big) \,,
  \Nxl3
  (a|b)^{S_G} = (b|a\inv) \,, \quad§
  & \big( (a|b),(\mathbf{c}|\mathbf{d})\big)^{M_{G,m}} = \big((a|ab),(\mathbf{c}|\mathbf{d})\big) \,,
  \Nxl3
  (a|b)^{U_G} = (ab\inv|b) \,.
  \eear
  \ee
These are indeed automorphisms, which straightforwardly extend to all of the 
generators $T^{(i)}_G$, $S^{(i)}_G$, $U^{(i)}_G$, $L^{(i)}_G$ and $M^{(i)}_G$. 
In other words, we have

\begin{prop}\label{prop:HeisenbergAction} ~
\\[2pt]
{\rm (i)}\, Conjugation by a mapping class implements an automorphism on $Z(G)^{\times 2\Ng}\!$.
\\[3pt]
{\rm (ii)}\, The set $C_G^{2\Ng}$ of $2\Ng$-conjugacy classes carries an action of a semidirect product 
of the groups $\,\mcg_\Ng$ and $Z(G)^{\times 2\Ng}\!$.
\\[3pt]
{\rm (iii)}\, The mapping class group action on the set $C^{2\Ng}_G$ permutes isomorphic 
$Z(G)^{\times 2\Ng}$-or\-bits. In particular, the union of all $Z(G)^{\times 2\Ng}$-or\-bits 
of a given isomorphism type is mapping class group invariant.
\end{prop}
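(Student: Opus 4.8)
The plan is to prove the three parts in order, with part~(i) carrying the computational content and parts~(ii) and~(iii) following formally. Throughout, write $\rho(\mathbf{z})$ for the permutation of $C^{2N}_G$ induced by left multiplication by $\mathbf{z}\iN Z(G)^{\times 2N}$, and $\sigma^N_G(X)$ for the permutation induced by a mapping class $X$. For part~(i) I would begin from the conjugation identity $(\mathbf{a}|\mathbf{b})\cir X \eq X\cir(\mathbf{a}|\mathbf{b})^X$ recorded above, equivalently $X\inv\cir\rho(\mathbf{z})\cir X \eq \rho(\mathbf{z}^X)$, together with $(\cdot)^{XY}\eq((\cdot)^X)^Y$. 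Because every mapping class is a word in the generators and a composite of automorphisms is an automorphism, it suffices to check that each of $T^{(i)}_G$, $S^{(i)}_G$, $U^{(i)}_G$, $L^{(i)}_G$, $M^{(i)}_G$ conjugates $Z(G)^{\times 2N}$ into itself by an automorphism. For each generator the relevant formula for $(\cdot)^X$ is the one displayed immediately before the statement; every output coordinate there is a product of input coordinates and their inverses, so that, the group $Z(G)^{\times 2N}$ being abelian, each map is a homomorphism, and each is visibly invertible with an inverse of the same shape (for instance $(a|b)^{T_G}\eq(a|ab)$ is inverted by $(a|b)\mapsto(a|a\inv b)$). Hence each generator acts by an automorphism, and since conjugation by a product is the composite of the conjugations by its factors, every mapping class $X$ acts on $Z(G)^{\times 2N}$ by an automorphism $\phi_X$, $\phi_X(\mathbf{z})\eq\mathbf{z}^X$, which is exactly the assertion of part~(i).

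For part~(ii) I would package this as a statement inside $\mathrm{Sym}(C^{2N}_G)$: let $\Gamma$ be the subgroup generated by $\mathrm{Im}(\sigma^N_G)$ together with $\rho\big(Z(G)^{\times 2N}\big)$. The identity $X\inv\cir\rho(\mathbf{z})\cir X\eq\rho(\mathbf{z}^X)$ shows that $\rho\big(Z(G)^{\times 2N}\big)$ is normalized by $\mathrm{Im}(\sigma^N_G)$ and that the induced conjugation action is exactly the $\phi$ from part~(i). By the universal property of the semidirect product this yields a homomorphism $\mcg_N\ltimes Z(G)^{\times 2N}\To\mathrm{Sym}(C^{2N}_G)$ with twist $\phi$, which is precisely the asserted action on $C^{2N}_G$. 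The only thing to watch is the standard bookkeeping of the semidirect-product convention, since the composition of permutations runs in the opposite order to $X\mapsto\phi_X$; replacing $\phi_X$ by $\phi_X\inv$ where required turns the resulting anti-homomorphism into an honest homomorphism into $\mathrm{Aut}(Z(G)^{\times 2N})$.

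For part~(iii) I would use that $Z(G)^{\times 2N}$ is a \emph{normal} subgroup of the semidirect product of part~(ii), so the quotient $\mcg_N$ permutes its orbits. Concretely, given a $Z(G)^{\times 2N}$-orbit $\mathcal O$ and a mapping class $X$, applying $\rho(\mathbf{z})$ to $\sigma^N_G(X)(\mathcal O)$ and using $\rho(\mathbf{z})\cir X\eq X\cir\rho(\mathbf{z}^X)$ together with the $Z(G)^{\times 2N}$-invariance of $\mathcal O$ shows that $\sigma^N_G(X)(\mathcal O)$ is again $Z(G)^{\times 2N}$-invariant, while the same identity shows any two of its points lie in a common orbit; hence it is a single orbit. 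Computing stabilizers gives $\mathrm{Stab}\big(\sigma^N_G(X)\,d\big)\eq\phi_X\inv\big(\mathrm{Stab}(d)\big)$, an automorphic image of $\mathrm{Stab}(d)$. Since $\phi_X$ is a group automorphism this preserves the orbit length and carries $\mathcal O$ to an orbit of the same type, so $\mcg_N$ permutes orbits of a given isomorphism type among themselves and the union of all orbits of a fixed type is $\mcg_N$-invariant. I expect the one genuinely delicate point to be this last step: one must be careful that the relevant notion of ``isomorphism type'' is the $\mathrm{Aut}(Z(G)^{\times 2N})$-invariant one — the $\mathrm{Aut}$-class of the stabilizer subgroup, equivalently isomorphism of $Z(G)^{\times 2N}$-sets up to a twist by an automorphism of the center — rather than the strict isomorphism class of $Z(G)^{\times 2N}$-sets, the two differing by precisely the automorphism $\phi_X$ furnished by part~(i).
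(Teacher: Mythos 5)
Your proof is correct and takes essentially the same route as the paper: the paper's own proof simply declares (i) and (ii) to be immediate consequences of the discussion preceding the proposition (the conjugation identity $(\mathbf{a}|\mathbf{b})\circ X \eq X\circ (\mathbf{a}|\mathbf{b})^X$ and the explicit formulas showing each generator conjugates $Z(G)^{\times 2N}$ by an automorphism) and derives (iii) directly from (i), which is exactly the material you have spelled out via the universal property of the semidirect product and the stabilizer computation. Your closing caveat on (iii) --- that ``isomorphism type'' must be read as ``stabilizers related by an automorphism of $Z(G)^{\times 2N}$'' rather than strict isomorphism of $Z(G)^{\times 2N}$-sets, since a mapping class $X$ carries an orbit with stabilizer $H$ to one with stabilizer $\phi_X\inv(H)$, which in general differs from $H$ (e.g.\ $S_G$ swaps the two factors of a pair) --- is a correct sharpening of a point the paper leaves implicit.
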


\begin{proof}
The statements (i) and (ii) are immediate consequences of the discussion preceding the 
proposition. (iii) follows directly from (i).
\end{proof}

The group $\mathrm{Out}(G)$ of outer automorphisms of $G$ acts on $G^{\times 2\Ng}$ 
through the diagonal embedding and thus induces an action on the set $C^{2\Ng}_G$ of 
$2\Ng$-conjugacy classes. Comparison with \eqref{eq:GpermTS..M} shows that
the action of $\mathrm{Out}(G)$ on $C^{2\Ng}_G$ commutes with the action of $\mcg_\Ng$. 
Thus we have

\begin{prop}\label{prop:OutAction}
The mapping class group action on $C^{2\Ng}_G$ permutes isomorphic $\mathrm{Out}(G)$-orbits. 
In particular, the union of all $\mathrm{Out}(G)$-orbits of a given isomorphism
type is mapping class group invariant.
\end{prop}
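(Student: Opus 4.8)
The plan is to reduce the proposition to the single fact asserted in the paragraph immediately preceding it, namely that the $\mathrm{Out}(G)$-action and the $\mcg_N$-action on $C^{2N}_G$ commute. Once this commutation is established, the statement becomes an instance of a general principle: whenever two group actions on a set commute, each element of one group sends an orbit of the other to an \emph{isomorphic} orbit. So the real work is concentrated in justifying commutation, and the rest is formal.

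First I would make precise that $\mathrm{Out}(G)$ acts on $C^{2N}_G$ at all. Any $\phi\in\mathrm{Aut}(G)$ acts on $G^{\times 2N}$ diagonally, $(\bh|\by)\mapsto(\phi(\bh)|\phi(\by))$, with $\phi$ applied to every entry. The conjugation action \eqref{g-action-2N} defining the $2N$-conjugacy classes is precisely the diagonal action of the inner automorphisms, and since $\mathrm{Inn}(G)$ is normal in $\mathrm{Aut}(G)$, the diagonal $\mathrm{Aut}(G)$-action descends to the orbit space $C^{2N}_G = G^{\times 2N}/\mathrm{Inn}(G)$ and factors through $\mathrm{Out}(G)$.

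Next I would verify commutation generator by generator, working upstairs on $G^{\times 2N}$ before passing to the quotient. For each permutation in \eqref{eq:GpermTS..M} one checks that composing with the diagonal $\phi$ yields the same result in either order. This is immediate from $\phi$ being a homomorphism: $\phi$ respects products, inverses, conjugations and commutators, and hence the multi-commutator map, $\phi(\tilde\mu_m(\bh|\by)) = \tilde\mu_m(\phi(\bh)|\phi(\by))$. For $T_G$, say, both orders yield $(\phi(g)|\phi(g)^{-1}\phi(x))$, and $S_G$, $U_G$ are equally immediate. The only genuine computation is for $L_G$ and $M_{G,m}$, whose formulas nest several conjugations together with $\tilde\mu_m$; this is where I expect the (mild) obstacle to sit, but it dissolves once one substitutes $\phi$ through each group operation occurring in \eqref{eq:GpermTS..M}. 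Passing to $C^{2N}_G$ then gives that the induced $\mathrm{Out}(G)$- and $\mcg_N$-actions commute.

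Finally, with commutation in hand, I would fix $w\in\mcg_N$ as a permutation of $C^{2N}_G$ and an $\mathrm{Out}(G)$-orbit $\mathcal{O}$. Since $w$ commutes with the $\mathrm{Out}(G)$-action, $w(\mathcal{O})$ is again an $\mathrm{Out}(G)$-orbit and the restriction $w|_{\mathcal{O}}\colon \mathcal{O}\to w(\mathcal{O})$ is an $\mathrm{Out}(G)$-equivariant bijection, so $\mathcal{O}\cong w(\mathcal{O})$ as $\mathrm{Out}(G)$-sets. Thus $\mcg_N$ permutes isomorphic $\mathrm{Out}(G)$-orbits, and in particular the union of all orbits of a fixed isomorphism type is $\mcg_N$-invariant, exactly parallel to the proof of Proposition \ref{prop:HeisenbergAction}(iii).
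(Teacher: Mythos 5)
Your proposal is correct and follows essentially the same route as the paper: the paper's entire argument is the paragraph preceding the proposition, which notes that the diagonal $\mathrm{Out}(G)$-action on $C^{2N}_G$ commutes with the $\mcg_N$-action by inspection of \eqref{eq:GpermTS..M}, and then deduces the statement formally exactly as in Proposition \ref{prop:HeisenbergAction}\,(iii). You have merely filled in the details the paper leaves implicit (descent of the $\mathrm{Aut}(G)$-action through $\mathrm{Inn}(G)$, the generator-by-generator check via $\phi(\tilde\mu_m(\bh|\by)) = \tilde\mu_m(\phi(\bh)|\phi(\by))$, and the equivariant-bijection argument), all of which are sound.
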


For any $g\iN G$ there is a distinguished mapping class group 
orbit in $C^{2\Ng}_G$: the orbit generated from the $2\Ng$-conjugacy class containing
the tuple $\big( (g|e),(e|e),...\,,(e|e)\big)$. More generally, for any $g \iN G$
we can consider all orbits emanating from $2\Ng$-conjugacy classes that are represented 
by elements of the type
  \be
  \big( (g^{m_1}|g^{n_1}),(g^{m_2}|g^{n_2}),...\,,(g^{m_\Ng}|g^{n_\Ng})\big) \,,
  \label{gm1gm2...}
  \ee
and one might suspect that these orbits have a particularly simple structure.
Indeed, denoting by $\mathrm{o}(g)$ the order of $g$, we have

\begin{thm}\label{thm:gTupleOrbits}
~\\
{\rm (i)} The mapping class group orbit in $C^{2\Ng}_G$ that is generated from the 
$2\Ng$-conjugacy class containing the element \eqref{gm1gm2...} includes the 
$2\Ng$-conjugacy classes containing all $2\Ng$-tuples 
$\big( (g^{p_1}|g^{q_1}),...,(g^{p_\Ng}|g^{q_\Ng})\big)$ which satisfy
  \be
  \mathrm{gcd}(m_1,...\,,m_\Ng,n_1,...\,,n_\Ng,\mathrm{o}(g))
  = \mathrm{gcd}(p_1,...\,,p_\Ng,q_1,...\,,q_\Ng,\mathrm{o}(g)) \,.
  \ee
Moreover, the $\mcg_\Ng$-action on $2\Ng$-conjugacy classes of this type factors 
through to an action of $\Sp(2\Ng,\zet)$.
\\[2pt]
{\rm (ii)} 
If any two different elements of the cyclic subgroup $\zet/\mathrm{o}(g)\zet \,{\le}\, G$ 
generated by $g \iN G$ belong to distinct conjugacy glasses,
then the mapping class orbit of the $2\Ng$-conju\-ga\-cy class that contains 
$\big( (g^{m_1}|g^{n_1}),(g^{m_2}|g^{n_2}),...\,,(g^{m_\Ng}|g^{n_\Ng})\big)$ is 
characterized by the number $\mathrm{gcd}(m_1,...\,,m_\Ng,n_1,...\,,,n_\Ng,\mathrm{o}(g))$, 
and the collection of such orbits coincides with the  collection of orbits of the 
defining action of $\Sp(2\Ng,\zet/\mathrm{o}(g)\zet)$ on $(\zet/\mathrm{o}(g)\zet)^{2\Ng}\!$.
\end{thm}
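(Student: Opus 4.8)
The plan is to restrict the whole discussion to the $\mcg_N$-invariant set of $2N$-conjugacy classes represented by tuples all of whose entries are powers of the fixed element $g$, and then to convert the combinatorics of these tuples into the linear algebra of the symplectic representation. Write $n := \mathrm{o}(g)$ and encode a tuple $\big((g^{m_1}|g^{n_1}),\ldots,(g^{m_N}|g^{n_N})\big)$ by its exponent vector $\mathbf v := (m_1,n_1,\ldots,m_N,n_N) \in (\zet/n\zet)^{2N}$. First I would feed such tuples into the explicit permutation formulas \eqref{eq:GpermTS..M}. Because $\langle g\rangle$ is abelian, every conjugation by a power of $g$ is trivial and every commutator occurring there vanishes; a short computation then gives, e.g., $T_G:(m,k)\mapsto(m,k-m)$, $S_G:(m,k)\mapsto(-k,m)$, $U_G:(m,k)\mapsto(m+k,k)$, while $L_G$ and $M_{G,m}$ act by analogous integer-linear maps on the relevant pairs (with $M_{G,m}$ collapsing to a $T_G$-type twist, since $\tilde\mu_m\eq e$ on this locus). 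In particular each generator sends a tuple of powers of $g$ to a tuple of powers of $g$ and acts on $\mathbf v$ by a fixed integer matrix. By the $G$-equivariance of Lemma \ref{lem:endoproperties}(ii), two exponent vectors yielding the same $2N$-conjugacy class are carried to exponent vectors yielding the same class, so these matrices descend to a well-defined $\mcg_N$-action on the $2N$-conjugacy classes of this type.

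Next I would identify the resulting matrices with the standard generators of the symplectic group. Conceptually, a tuple of powers of $g$ is a homomorphism $\pi \To \langle g\rangle \Cong \zet/n\zet$; since the target is abelian it factors through $H_1(\Sigma_N;\zet) \Cong \zet^{2N}$, and $\mcg_N$ acts through its homological (symplectic) representation, i.e.\ through $\mcg_N \To \Sp(2N,\zet)$ followed by reduction modulo $n$. Concretely, the matrices above are exactly the transvections attached to the relevant Dehn twists: those for $T_G,U_G,L_G$ coincide with the $\mcg_{N,0}$-generators of \cite{bant4} recorded in the Remark preceding the theorem, and one checks directly that each preserves the standard symplectic form $\sum_i dm_i\wedge dn_i$. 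Since these transvections generate $\Sp(2N,\zet)$, the action on exponent vectors factors through $\Sp(2N,\zet) \To \Sp(2N,\zet/n\zet)$, which is the ``factors through $\Sp(2N,\zet)$'' assertion of (i).

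To pin down the orbits I would invoke the standard fact that the orbits of $\Sp(2N,\zet/n\zet)$ on $(\zet/n\zet)^{2N}$ are precisely the level sets of the function $\mathbf v \mapsto \gcd(m_1,\ldots,n_N,n)$. Invariance of this gcd is clear, since any $\zet/n\zet$-linear map preserves the ideal generated by the components; for transitivity on a fixed value one reduces to the classical transitivity of $\Sp(2N,\zet)$ on primitive integer vectors together with surjectivity of the reduction $\Sp(2N,\zet) \twoheadrightarrow \Sp(2N,\zet/n\zet)$, treating a general content $d\mid n$ by scaling. Combining this with the previous paragraph, the $\mcg_N$-orbit of the class of $\mathbf v$ is the image, under the (surjective) map from exponent vectors to $2N$-conjugacy classes, of the full set of exponent vectors sharing the gcd of $\mathbf v$ — which is exactly the content of (i).

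Finally, for (ii) I would use the separation hypothesis. If no two distinct elements of $\langle g\rangle$ are conjugate, then $g^a \,{\sim}\, g^b$ forces $a\equiv b \pmod n$; hence two tuples of powers of $g$ are simultaneously conjugate only if their exponent vectors are equal, and the map from $(\zet/n\zet)^{2N}$ to the set of $2N$-conjugacy classes of this type is a bijection. This bijection intertwines the $\mcg_N$-action with the defining action of $\Sp(2N,\zet/n\zet)$, so the orbits are classified exactly by the gcd and coincide with the $\Sp(2N,\zet/n\zet)$-orbits on $(\zet/n\zet)^{2N}$. The hard part will be the middle step: verifying that the induced integer matrices really are symplectic and together generate all of $\Sp(2N,\zet)$ (the matching with the homological representation), together with the elementary but not entirely trivial transitivity statement for $\Sp(2N,\zet/n\zet)$ on vectors of fixed content, especially when $n$ is composite.
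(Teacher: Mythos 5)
Your proposal is correct and follows essentially the same route as the paper's proof: restrict to tuples of powers of $g$, compute the explicit integer-matrix action of the generators $T_G$, $U_G$, $S_G$, $L_G$, $M_{G,m}$ on exponent vectors, identify these matrices with the Dehn-twist transvections of the symplectic representation (so that the action factors through $\Sp(2N,\zet)\To\Sp(2N,\zet/\mathrm{o}(g)\zet)$), classify orbits by the gcd invariant, and deduce (ii) from the injectivity of the exponent-vector-to-class map under the separation hypothesis. The only cosmetic difference is that where you invoke the classification of $\Sp(2N,\zet/\mathrm{o}(g)\zet)$-orbits on $(\zet/\mathrm{o}(g)\zet)^{2N}$ as a standard fact (sketching the reduction to primitive integer vectors), the paper proves it self-containedly by an explicit Euclidean-algorithm reduction of a tuple to the normal form $\big((\mathrm{gcd}|0),(0|0),\ldots,(0|0)\big)$ using the standard symplectic generators.
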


\begin{proof}
(i) We first show that the $\mcg_\Ng$-action on $2\Ng$-tuples of the stated type 
factors through to an $\Sp(2\Ng,\zet)$-action. For $a$ and $b$ oriented simple 
closed curves on a closed oriented surface, denote $[a]$ and $[b]$, respectively, 
their homology classes. A Dehn twist along $b$ acts on $[a]$ as (see e.g.\
\cite[Prop.\,6.3]{FAma})
  \be
  T_b:\quad [a]\mapsto [a]+\langle a,b\rangle\, [b] \,,
  \label{eq:SymplecticRep}
  \ee
where $\langle\cdot\,,\cdot\rangle$ is the intersection form. According
to \eqref{eq:GpermTS..M} we have
  \be
  \hsp{-.8}\bearll
  T_G: & (g^m|g^n) \,\mapsto\, (g^m|g^{n-m})\,,
  \Nxl3
  U_G: & (g^m|g^n) \,\mapsto\, (g^{m+n}|g^n)\,,
  \Nxl3
  L_G: & \big((g^k|g^l),(g^m|g^n)\big) \,\mapsto\, \big((g^k|g^{l+m-k}),(g^m|g^{n+k-m})\big)\,,
  \Nxl3
  M_{G,p}: \!\!& \big((g^k|g^l),(g^{m_1}|g^{n_1}),...\,,(g^{m_p}|g^{n_p})\big)
  \Nxl2& \hsp{7.6}
  \mapsto\, \big((g^k|g^{l-k}),(g^{m_1}|g^{n_1}),...\,,(g^{m_p}|g^{n_p})\big) \,.
  \eear
  \ee
These maps generalize in an obvious way to the action of the $\mcg_\Ng$-generators 
on $2\Ng$-tu\-ples. Recalling from Definition \ref{def:coendendo}\,(ii) the relation between 
the generators $T_G^{(i)}$ etc.\ and Dehn twists, by comparison with the Dehn twist
action \eqref{eq:SymplecticRep} one verifies that the so obtained action coincides with 
the one of Dehn twists on homology. More precisely, there exists a choice of orientations 
of the curves $a_i$, $b_i$, $c_i$, $d_i$, $e_i$ defined in \eqref{thesurface}
such that the action of Dehn twists 
on a tuple $\big((g^{m_1}|g^{n_1}),...\,,(g^{m_\Ng}|g^{n_\Ng})\big)$ can be identified 
with the action on the homology class $m_1[d_1]+n_1[b_1]+...+m_\Ng[d_\Ng]+n_\Ng[b_\Ng]$ via 
\eqref{eq:SymplecticRep} (a useful relation is $[b_i] \eq {\pm}\, ([d_{i-1}]{-}[d_i])$, 
where the sign depends on the choice of relative orientations).
\\[2pt]
To establish the remaining part of (i), note that the identification of first homology 
classes and tuples of the relevant type gives a surjective map from the first homology 
(with coefficients in $\zet$) to a subset of $C^{2\Ng}_G$, equivariant with respect to 
the action of $\Sp(2\Ng,\zet)$, i.e.\ symplectic orbits on homology are mapped to orbits 
on $2\Ng$-conjugacy classes. We therefore first determine the symplectic orbits of the 
defining action on $\zet^{\times 2\Ng}$ (corresponding to the first homology with a 
chosen symplectic basis).
Now the \slz-or\-bits on $\zet^{\times 2}$ are characterized by the greatest 
common divisor, i.e.\ $(m|n),\,(p|q)\iN \zet^{\times 2}$ belong to the same orbit 
iff $\mathrm{gcd}(m,n) \eq \mathrm{gcd}(p,q)$. In particular, every \slz-orbit
has a unique representative of the form $(r|0)$. The group $\Sp(2\Ng,\zet)$ has a generating set 
consisting of the linear transformations 
  \be
  (m_i|n_i) \,\mapsto\, (m_i{+}n_i|n_i) \qquand  
  (m_i|n_i) \,\mapsto\, (n_i| \,{-}m_i)   
  \label{eq:SpT..SpS}
  \ee
acting on an arbitrary single entry of an $\Ng$-tuple
$\big((m_1|n_1),...\,,(m_\Ng|n_\Ng)\big) \iN \zet^{\times 2\Ng}$, and of 
  \be

  \ee
so that requiring that $\varphi$ commutes with $S$ implies that 
$\varphi_2(m,n) \eq \varphi_2(n) \eq \varphi_1(n)$. Then 
$T\cir\varphi  ((m|n)) \eq (\varphi_1(m)|\varphi_1(n) \,{-}\, \varphi_1(m))$ and
 $\varphi\cir T ((m|n)) \eq (\varphi_1(m)| \varphi_1(n{-}m)$,
hence the requirement that $\varphi$ commutes with $T$ gives 
$\varphi_1(m{+}n) \eq \varphi_1(m) \,{+}\, \varphi_1(n)$. In other words, $\varphi_1$ 
must be an automorphism of the group $\zet/k\zet$. The automorphism group of
$\zet/k\zet$ is $\big(\zet/k\zet\big)^\times$, and it follows that $\varphi$ must 
be a power map $\mathrm{p}_m$ for some $m$. Conversely, every such power map satisfies 
the requirement.
\end{proof}

Let now $\Ng$ be an arbitrary positive integer, and consider the action of $\Sp(2\Ng,\zet)$
on $(\zet/k\zet)^{2\Ng}$ with elements $\big( (m_1|n_1),(m_2|n_2),...\,,(m_\Ng|n_\Ng)\big)$. 
It follows again from Theorem \ref{thm:gTupleOrbits} that there is a fixed point 
$(\bf{0}|\bf{0})$, and in addition one orbit for each divisor $d$ of $k$ with $d \,{<}\, k$, 
containing the element $\big( (d|0),(0|0),...\,,(0|0)\big)$. The latter orbit contains all 
elements for which $\mathrm{gcd}(m_1,n_1,m_2,n_2,...\,,m_\Ng,n_\Ng,k) \eq d$. It remains to 
describe the possible quotients of these symplectic orbits, so again we need to 
investigate automorphisms of the $\Sp(2\Ng,\zet)$-set $\big(\zet/k\zet\big){}_{}^{2\Ng}$. 
Now the automorphisms which may result in the identification 
of elements of the type $\big( (g^{k_1}|g^{l_1}),...\,,(g^{k_\Ng}|g^{l_\Ng})\big)$ and 
$\big( (g^{m_1}|g^{n_1}),...\,,(g^{m_\Ng}|g^{n_\Ng})\big)$ are implemented by conjugation. 
In particular, for each $i \iN \{1,2,...\,,\Ng\}$ the exponent $m_i$ can only depend on $k_i$, 
but not on any other exponent $k_j$ nor on any $l_j$, and likewise $n_i$ can only depend on $l_i$.
Thus such an automorphism $\varphi$ acts as $k_i \,{\mapsto}\, m_i \eq \varphi_i(k_i)$
and $l_i \,{\mapsto}\, n_i \eq \psi_i(l_i)$. More generally we can state

\begin{lemma}\label{lemma:2NSymplecticAut}
A bijection $\varphi$ of the set $\big(\zet/k\zet\big)^{2\Ng}$ of the form
  \be
  \label{varphi-varphi-psi}
  \begin{array}{l}
  \varphi:\quad \big( (m_1|n_1),(m_2|n_2),...\,,(m_\Ng|n_\Ng)\big)
  \Nxl3 \qquad
  \longmapsto\, \big( (\varphi_1(m_1)|\psi_1(n_1)),
  (\varphi_2(m_2)|\psi_2(n_2)),...\,,(\varphi_\Ng(m_\Ng)|\psi_\Ng(n_\Ng))\big)
  \end{array}
  \ee
commutes with the $\Sp(2\Ng,\zet)$-action iff it acts on each pair
$(m_i|n_i)$ as one and the same power map $\mathrm{p}_m$. 
\end{lemma}

\begin{proof}
Suppose $\varphi$ is an automorphism of the form \eqref{varphi-varphi-psi}. We demand 
that $\varphi$ commutes with all generators of $\Sp(2\Ng,\zet)$ as given in \cite{burk}.
Equivariance with respect to the generator that acts as $(m_i|n_i) \,{\mapsto}\, (m_i{+}n_i|n_i)$ 
gives $\varphi_i(m_i+n_i) \eq \varphi_i(m_i)+\psi_i(n_i)$, while equivariance with respect to
$(m_i|n_i) \,{\mapsto}\, (n_i|{-}m_i)$ yields $\psi_i(n_i) \eq \varphi_i(n_i)$ and
$\varphi_i(-m_i) \eq {-}\varphi_i(m_i)$. These equalities must hold for all $i \eq 1,2,...,\Ng$ 
and for all $m_i$ and $n_i$. It follows that $\psi_i \eq \varphi_i$ for every $i$, and that
each $\varphi_i$ is a group automorphism of $\zet/k\zet$. This implies, in turn, that the 
action of $\varphi$ on a pair $(m_i|n_i)$ is implemented by a power map $\mathrm{p}_{l_i}$ 
for some $l_i\iN \big(\zet/k\zet\big)^\times$. Imposing that $\varphi$ also commutes with all 
transpositions $\big( (m_i|n_i),(m_{i+1}|n_{i+1})\big)  \,{\mapsto}\, 
\big( (m_{i+1}|n_{i+1}), (m_i|n_i)\big)$ then implies that $\varphi_i \eq \varphi_j$ 
for all $i,j \eq 1,2,...,\Ng$. It is easily verified that $\varphi$ then
commutes with all generators of the type $\big( (m_i|n_i),(m_{i+1}|n_{i+1})\big) \,{\mapsto}\, 
\big( (m_i{-}n_{i+1}|n_{i}), (m_{i+1}{-}n_i|n_{i+1})\big)$ as well.
\end{proof}

Lemma \ref{lemma:2NSymplecticAut} enables us to describe any (quotient of a) symplectic 
orbit that may occur in the mapping class group action on $2\Ng$-conjugacy classes. 
In order to illustrate this point, we present the \slz-orbits on $\big(\zet/k\zet\big)^2$ 
for $k \eq 3,4,5$ together with their quotients. The following figures show
the sets $\big(\zet/k\zet\big)^2$ for $k \eq 2, 3, 4, 5$, together with the 
actions of the generators $T$ (displayed as \red\ lines) and $S$ (\blue\ lines) of \slz.
In all cases there is the fixed point $(0|0)$ and the orbit containing $(1|0)$, and for
$k \,{=}\, 4$ there is a further length-3 orbit containing $(2|0)$. 
     

  \negspace{35}

\be
  \diagZtwoZtwoTwoorbits {1} {.8} {-2} {.8}
  \label{fig:Z2orbit}
\ee

  \negspace{31}

\be
  \diagZthreeZthreeTwoorbits {1} {.8} {-2} {1.6} 
\ee

  \negspace{39}

\be
  \diagZfourZfourThreeorbits {1} {.9} {-2} {2.7} 
\ee

  \negspace{39}

\be 
  \hspace*{-1.5em}
  \diagZfiveZfiveTwoorbits {1} {.9} {-1.9} {3.9} 
\ee

We illustrate the possible quotients in the cases $k \eq 3,4,5$. 
For $k \eq 3$ we need to mod out by $\mathrm{p}_2$, since
$\big(\zet/3\zet\big)^\times {=}\, \{1,2\}$. The $\mathrm{p}_2$-action has one fixed point $(0|0)$ and four length-2 orbits.
Choosing representatives of these orbits, the quotient looks a sfollows:

     \negspace{39}

\be
  \diagZthreeZthreeModPtwo {1} {.8} {-2} {.8}
\ee

Likewise, as the only non-trivial element in $\big(\zet/4\zet\big)^\times$ is $3$, we must mod out the action of $\mathrm{p}_3$, 
   which has four fixed points $(0|0)$, $(0|2)$, $(2|0)$ and $(2|2)$ and six length-2 orbits.
The \slz-action on the corresponding quotient looks as follows:

     \negspace{37}

\be
  \diagZfourZfourModPthree {1} {.8} {-2} {1.6}
  \label{eq:Z4quotorbits}
\ee

The group $\big(\zet/5\zet\big)^\times$ has three non-trivial elements, $2$, $3$ and $4$;
the equivalence classes under $\mathrm{p}_2$ and $\mathrm{p}_3$ coincide. The $\mathrm{p}_2$-action on $(\zet/5\zet)^2$ has $(0|0)$ as single fixed point
   and six length-4 orbits, and the $\mathrm{p}_4$-action amounts to split each of the latter 
   further into two length-2 orbits.
   The quotients can be illustrated as

     \negspace{39}

\be
\begin{array}{l}
  \diagZfiveZfiveModPtwo  {1.1} {.8} {-1.8} {.8}
  \\[-7pt]
  \diagZfiveZfiveModPfour {1} {.8} {-1.8} {1.6}
\end{array}
\label{fig:Z5modP4}
\ee

In the sequel we will focus on non-symplectic orbits.


\subsection{Dihedral groups \boldmath{$D_n$}, genus 1}\label{sec:Dn}

\paragraph{Basic data.}

The dihedral group $D_n$, for $n\iN \mathbb{N}$, has the presentation
  \be
  D_n =\langle x,y \,|\, x^n \eq e \eq y^2,\, yxy \eq x\inv \rangle \,.
  \ee
The set $\{y^{a}x^p\,|\, a \eq 0,1,\, p \eq 0,1,...\,,n{-}1\}$ exhausts the 
elements of $D_n$, so that $|D_n| \eq 2n$. We assume $n \,{>}\, 2$, 
since otherwise $D_n$ is abelian; also note the isomorphism $D_3 \,{\cong}\, S_3$.

$D_n$ has a normal abelian subgroup $\langle x\rangle \,{\cong}\, \zet/n \zet$; 
for odd $n$ the subgroups of $\langle x\rangle$ are the only normal subgroups. 
In $D_{2p}$ in addition the subgroup $\langle x^2,y\rangle \,{\cong}\, D_p$ is normal. 
Some additional pertinent properties of $D_n$ are given in the following list.
\\[7pt]
{\boldmath{$n \eq 2p+1$}:}
\\[7pt]
\begin{tabular}{ll}
        Exponent:           &  $2n = 4p+2$
\nxl3
        Center:	            &  $Z(D_n) = \{ e\}$
\nxl3
        Conjugacy classes:~ &  $c_e = \{e\}$\,,
\nxl2
             &  $c_{x^k} = \{x^k,x^{-k}\}\,\ (k\iN\{1,2,...\,,p\}) \,, \quad
                c_y = \{y,xy,...\,,x^{n-1}y\}$
\nxl2
        &  $|C_{D_n}| =	p+2=\frac12\,(n+3)$
\nxl3
        &  $C'_{D_n} =	\{c_{x^k} \,|\, k\eq 0,1,...\,,p\}$
\nxl3
        Centralizers:	&
	$ Z_{x^k} = \langle x\rangle$ ~for $k \eq 1,2,...\,,p\,, \quad
          Z_y =	\langle y\rangle$\,.
\end{tabular}
         ~\nxl2

\noindent 
{\boldmath{$n \eq 2p$}:}
\\[-3pt]~
\begin{tabular}{ll}
        Exponent:    &   $ \!\begin{cases}
                         \, n=2p  & \text{for } p \text{ even}\\
                         \, 2n=4p & \text{for } p \text{ odd}
                         \end{cases} $
\nxl7
        Center:	     &  $Z(D_n) = \langle x^p\rangle\cong \zet/2\zet$
\nxl3
        Conjugacy classes:~ &  $ c_e =	\{e\} $\,,
\nxl2
                & $ c_{x^k} = \{x^k,x^{-k}\}\,\ (k\iN\{1,2,...\,,p{-}1\})\,, \qquad
                  c_{x^p} = \{x^p\} $\,,
\nxl2
                & $ c_{y}	= \{y,x^2y,...,x^{n-2}y\}\,, \quad~~
                  c_{xy} = \{xy,x^3y,...\,,x^{n-1}y\} $
\nxl3
                & $ |C_{D_n} \,|\, = p+3 = \frac12\,(n+6) $
\nxl2
                & $ C'_{D_n} = \begin{cases}
                               \, \{c_{e},c_{x^2},..\,.c_{x^{p-1}}\} & \text{for } p\text{ odd}\\
                               \, \{c_e,c_{x^2},...\,,c_{x^p}\}  & \text{for } p\text{ even}
                               \end{cases} $
\nxl7
Centralizers:	& $ Z_{x^k} = \langle x\rangle \quad{\rm for}~ k \eq 1,2,...\,,p{-}1 $
\nxl2
            & $ Z_{y}   = \langle x^p,y\rangle \,\cong\, \zet/2\zet\times \zet/2\zet $
\nxl2
            & $ Z_{xy}  = \langle x^p,x^{p+1}y\rangle \,\cong\, \zet/2\zet\times\zet/2\zet $
\end{tabular}


\paragraph{\boldmath{$\mcg_1$-orbits on $C^2_{D_n}$ for $n \eq 2p+1$}.}

By the formulas \eqref{eq:NumberTBlocks} and \eqref{eq:Number0ptTBlocks} 
for the number of diconjugacy classes and of diconjugacy classes mapped by $\mu_1$
to $c_e \eq \{e\}$ we have
 $|C^2_{D_n}| \eq 2p^2+5p+4$ and $|\mu_1\inv(c_e)| \eq 2p^2+2p+4$.
The following pairs of elements form a complete set of representatives of diconjugacy classes:
  \be
  \begin{array}{l}
  (x^k|x^l)  \quad \text{for\, $k \eq 0$ and $l \eq 0,1,...\,,p$
             ~~or~~ $k \eq 1,2,...\,,p$ and $l \eq 0,1,...\,,2p$} \,,
  \Nxl3
  (y|x^k)\,,~~ (x^k|y)\,,~~ (y|x^ky) \quad \text{for } \,k \eq 0,1,...\,,p \,. 
  \eear
  \label{Dn-diconjcl}\ee
The number of such pairs, and thus of diconjugacy classes, is 
$ \frac12\,(n^2\,{+}\,1) \eq 2p^2\,{+}\,2p\,{+}\,1$ for the pairs of type $(x^k|x^l)$,
and $ \frac12\,(n\,{+}\,1) \eq p\,{+}\,1$ for the other three types.
By abuse of notation, we use these pairs to denote the diconjugacy classes 
they represent. Then
  \be
  \mu_1\inv(c_e) = \{ (x^k|x^l),(y|e),(e|y),(y|y) \}
  \ee
with $k$ and $l$ as in the first line of the list \eqref{Dn-diconjcl} and
  \be
  \mu_1\inv(c_{x^{2k}}) = \{ (y|x^k),(x^k|y),(y|x^ky) \}
  \quad {\rm for}~ k \eq 1,2,...\,,p \,.
  \ee

Note that $\mu_1\inv(c_e)$ consists of all those diconjugacy classes that are of the 
form analyzed in Theorem \ref{thm:gTupleOrbits}. It follows that $\mu_1\inv(c_e)$ splits 
into symplectic orbits, or possibly quotients of symplectic orbits. Because of $y^2 \eq e$ 
the set $\{(y|e),(e|y),(y|y)\}$ can be identified with the non-trivial symplectic orbit of 
$\big(\zet/2\zet\big){}_{}^2$. The element $x$ has order $2p{+}1 \,{=}\, n$, so the remaining 
orbits can be described starting from the symplectic orbits of $(\zet/n\zet)^2$. 
Apart from the trivial fixed point, there is one such orbit for each divisor $d$ of $n$
with $d \,{<}\, n$. There are thus $\divfn(n)$ symplectic orbits, with $\divfn(n)$ 
the number of divisors of $n$. As noted above, the corresponding diconjugacy classes contain 
two elements, since $yx^ky \eq x^{-k}$. The map that takes $(x^k|x^l)$ to $(x^{-k}|x^{-l})$ 
coincides with the power map corresponding to $n \,{-}\, 1\iN \big(\zet/n\zet\big)^\times$. 
If $k$ divides $n$, write $n \eq rk$, so that $\mathrm{gcd}(n,n{-}k) \eq k$. 
It follows that dividing out the relevant automorphism does not change the number of orbits. Thus the 
set of diconjugacy classes represented by pairs of the form $(x^k|x^l)$ consists of 
$\divfn(n)$ orbits. These are the quotients of symplectic orbits by the 
automorphism defined by the power map corresponding to $2p \eq n \,{-}\, 1$. 
The set $\mu_1\inv(c_e)$ thus consists of $\divfn(n)\,{+}\,1$ orbits.

\medskip

The remaining orbits are quickly determined. Each of the sets $\mu_1\inv(c_{x^k})$
with $k=1,2,...\,,p$, is $\mcg_1$-invariant, and its structure is given by 
 \be
  \bearl
  T:\quad (y|x^k) \mapsto (y|x^ky) \mapsto (y|x^k) \,, \quad (x^k|y) \mapsto (x^k|y) \,,
  \Nxl2
  S:\quad (y|x^k) \mapsto (x^k|y) \mapsto (y|x^k)  \,, \quad (y|x^ky) \mapsto (y|x^ky) \,.
  \eear
  \ee 
To summarize, $C^2_{D_n}$ consists of $\divfn(n)+p+1$ $\mcg_1$-orbits when $n=2p+1$.


\paragraph{\boldmath{$\mcg_1$-orbits on $C^2_{D_n}$ for $n \eq 2p$}.}

Equations \eqref{eq:NumberTBlocks} and \eqref{eq:Number0ptTBlocks} give
 $|C^2_{D_n}| \eq 2p^2\,{+}\,6p\,{+}\,8$ and $|\mu_1\inv(c_e)| \eq 2p^2\,{+}\,14 $.
We again denote diconjugacy classes by chosen representatives. Then we get the types
  \be
  \begin{array}{l}
  (x^k|x^l)  \quad \text{for\, $k \eq p,0$ and $l \eq 0,1,...\,,p$}
  \\[3pt] \hspace*{10.8em}
             \text{or~~ $k \eq 1,2,...\,,p{-}1$ and $l \eq 0,1,...\,,2p{-}1$} \,,
  \\[6pt]
  (y|x^k)\,,~~ (x^k|y)\,,~~ (xy|x^k)\,,~~ (x^k|xy)\,,~~ (y|x^ky)\,,~~ (xy|x^{k+1}y)
  \\[3pt] \hspace*{20.6em}
  \quad \text{for } \,k \eq 0,1,...\,,p \,.
  \eear
  \ee
The numbers of these are $\frac12\,(n^2+4) \eq 2p^2+2$ for type $(x^k|x^l)$ and
$\frac12\,(n+2) \eq p+1$ for each of the other types. Further, these types are distributed as
  \be
  \mu_1\inv(c_e) = \{(x^k|x^l),(y|x^{\epsilon p}),(x^{\epsilon p}|y), (x^{\epsilon p}|xy),
  (xy|x^{\epsilon p}), (y|x^{\epsilon p}y),(xy|x^{\epsilon p+1}y)\} 
  \ee
with $\epsilon \eq 0,1$ and
  \be
  \mu_1\inv(c_{x^{2k}}) = \{((y|x^k),(x^k|y),(xy|x^k),(x^k|xy),(y|x^ky),(xy|x^{k+1}y)\}
  \ee
with $k \eq 1,2,...\,,p{-}1$.

Again there are many symplectic orbits, all of which are contained in $\mu_1\inv(c_e)$. 
These are the orbits containing the diconjugacy classes of the pairs 
of type $(x^k|x^l)$, $(y|e)$, $(e|y)$, $(y|y)$, $(xy|e)$, $(e|xy)$, and $(xy|xy)$. The 
latter six diconjugacy classes form two orbits, each equivalent to the non-trivial
orbit in $\big(\zet/2\zet\big){}_{}^2$. The classes of type $(x^k|x^l)$
constitute the orbits of the quotient $\big( \zet/n\zet\big){}_{}^2/\mathrm{p}_{n-1}$. 
As in the case of odd $n$ there are $\divfn(n)$ such orbits. In 
total we have thus obtained $\divfn(n)\,{+}\,2$ (quotients of) symplectic orbits.

\medskip

The remaining orbits are easily determined by hand. If $p$ is odd, then the 
remaining part of $\mu_1\inv(c_e)$ forms a single orbit of the form
\be
  \diagDevenPoddMuOneInvCe {1.3cm}
\ee
If instead $p$ is even, then the diconjugacy classes represented by $(x^p|y)$, $(y|x^p)$ 
and $(y|x^py)$ form an invariant subset, by Proposition \ref{prop:CenterSubgroupInv}\,(ii). 
Hence for even $p$ the remaining part of $\mu_1\inv(c_e)$ splits into two orbits:

     \negspace{41}
\be
  \diagDevenPevenMuOneInvCe {1.9} {1.1}
\ee

Proposition \ref{prop:CenterSubgroupInv}\,(ii) also implies that the orbits in 
$\mu_1\inv(c_{x^{2k}})$ with $k \eq 1,2,...\,,p\,{-}\,1$ follow a similar pattern. We have
\be
  \diagDevenPevenMuOneInvCxKodd {1.3cm} {-3.6} {0}
  \hspace*{2.5em}
\ee
and

     \negspace{47}
\be
  \diagDevenPevenMuOneInvCxkk {1.9} {1.1}
\ee

To summarize the $n\,{=}\,2p$ situation: The set $C^2_{D_n}$ of diconjugacy classes 
splits into $\divfn(n)\,{+}\,3\,\frac{p-1}{2}\,{+}\,3$ orbits if $p$ is odd, and
into $\divfn(n)\,{+}\,2\,{+}\,\frac{3p}{2}$ orbits if $p$ even is even.


\subsection{Generalized quaternion groups \boldmath{$Q_{4p}$}, genus 1}\label{sec:Q4p}

The generalized quaternion group $Q_{4p}$ has the presentation
  \be
  Q_{4p} = \langle x,y \,|\, x^{2p} \eq e,\, x^p \eq y^2,\, y\inv xy \eq x\inv\rangle \,.
  \ee
The order of $Q_{4p}$ is $4p$, and we may enumerate the elements as 
  \be
  y^{2\epsilon}x^ky^a \quad \mathrm{with}\quad \epsilon,a\iN \{0,1\},\ k\iN \{0,...\,,,p{-}1\} \,,
  \ee
where $x^{-k} \eq y^2x^{p-k}$ and  $(x^ky)\inv \eq y^2x^ky \,$. The exponent of
$Q_{4p}$ is $2p$ for $p\iN 2\zet$ and $4p$ for $p\iN 2\zet{+}1$,
and the center is $Z(Q_{4p}) \eq \langle y^2\rangle \,{\cong}\, \zet/2\zet$.
The conjugacy classes are
  \be
  \bearll
  c_e = \{e\}\,, &
  c_{x^k} = \{x^k,x^{-k}\} ~~\mathrm{for}~~ k\iN \{1,2,...\,,p{-}1\} \,,
  \\[6pt]
  c_{y^2} = \{y^2\} \,, \quad & c_y = \{x^{2k}y \,|\, k \eq 0,1,...\,,p{-}1 \} \,,
  \\[6pt]
  & c_{xy} = \{x^{2k+1}y \,|\, k \eq 0,1,...\,,p{-}1 \} \,,
  \eear
  \ee
so that
  \be
  |C_{Q_{4p}}| = p+3  \qquand
  C'_{Q_{4p}} = \left\{ \bearll
  \{ c_e,c_{x^2},...\,, c_{x^{p-1}}\} & \text{for } p \text{ odd} \,,
  \\[6pt]
  \{ c_e,c_{x^2},...\,, c_{x^{p-2}}, c_{y^2}\}     & \text{for } p \text{ even} \,.
  \eear \right.
  \ee
The centralizers are
  \be
  \bearl
  Z_{x^k} = \langle x\rangle\cong \zet/2p\zet ~~\mathrm{for}~~ k\iN \{1,2,...\,,p{-}1\} \,,
  \\[6pt]
  Z_y = \langle y\rangle \cong \zet/4\zet
  \qquand Z_{xy} = \langle xy\rangle \cong \zet/4\zet \,.
  \eear
  \ee
Equations \eqref{eq:NumberTBlocks} and \eqref{eq:Number0ptTBlocks} give
  $ |C^2_{Q_{4p}}| \eq 2p^2+6p+8$ and $ |\mu_1\inv(c_e)| \eq 2p^2+14$.
The subgroup $\langle x\rangle \,{\cong}\,\zet/2p\zet$ is normal, as is the subgroup 
generated by $y$ and $x^2$. The similarity with $D_{2p}$ should not come as a
surprise, as $Q_{4p}$ and $D_{2p}$ have the same character table.

\medskip
Labeling diconjugacy classes by representatives, the following list exhausts $C^2_{Q_{4p}}$
(the number in brackets gives the total number of classes of the respective type).
     $$
  \bearll
  (x^k|x^l)    \quad \mathrm{for}~ k,l\iN\{0,...\,,p{-}1\},\ (k,l)\neq (0,0)   & (p^2-1)
  \\[6pt]
  (x^k|y^2x^l) \quad \mathrm{for}~ k\iN \{1,...\,,p{-}1\},~ l\iN\{0,...\,,p{-}1\} & (p^2\,{-}\,p)
  \\[6pt]
  (y^2|x^k)    \quad \mathrm{for}~ k\iN \{1,...\,,p{-}1\}    & (p-1)
  \\[6pt]
  (e|e),~~(e|y^2),~~(y^2|e),~~ (y^2|y^2)	                        & (4)
  \\[6pt]
  (e|y),~~(y|e),~~(y|y),~~(y|y^2),~~(y^2|y),~~(y|y^3)			& (6)
  \\[6pt]
  (e|xy),~~(xy|e),~~(xy|xy),~~(xy|y^2),~~(y^2|xy),~~(xy,y^2xy)	& (6)
  \eear \hspace*{2.23em}
     $$
     ~\\[-26pt]
     \be
     \bearll
     \phantom{(e|xy),~~(xy|e),~~(xy|xy),~~(xy|y^2),~~(y^2|xy),~~(xy,y^2xy)} & \\[-6pt]
  (x^k|y)      \quad \mathrm{for}~ k\iN\{1,...\,,p{-}1\}     & (p-1)
  \\[6pt]
  (y|x^k)      \quad \mathrm{for}~ k\iN\{1,...\,,p{-}1\}     & (p-1)
  \\[6pt]
  (x^k|xy)     \quad \mathrm{for}~ k\iN\{1,...\,,p{-}1\}     & (p-1)
  \\[6pt]
  (xy|x^k)     \quad \mathrm{for}~ k\iN\{1,...\,,p{-}1\}     & (p-1)
  \\[6pt]
  (y|x^ky)     \quad \mathrm{for}~ k\iN\{1,...\,,p{-}1\}     & (p-1)
  \\[6pt]
  (xy|x^kxy)   \quad \mathrm{for}~ k\iN\{1,...\,,p{-}1\}     & (p\,{-}\,1)
  \eear
  \ee
The first six rows of this list exhaust $\mu_1\inv(c_e)$. The diconjugacy classes 
contained in rows $1$\,--\,$4$ make up a quotient of the symplectic orbits on 
$(\zet/2p\zet)^2$, namely the quotient by the power map 
corresponding to $2p{-}1\iN(\zet/2p\zet)^\times$, which implements the 
conjugacy relating $(x^k|x^l)$ and $(x^{-k}|x^{-l})$. There are always the orbits 
containing $(y^2|e)$ (of length three), corresponding to the divisor $2$ of $2p$,
and $(x|e)$, corresponding to the divisor $p$ of $2p$.
In total there are of course $\divfn(2p)$ orbits residing in the first four rows. 
Rows $5$ and $6$ are covered by Theorem \ref{thm:gTupleOrbits} as well; they correspond
to powers of $y$ and of $xy$, respectively. Because of $(xy)^2 \eq y^2$,
potentially there are identifications between these orbits. Indeed,
for $p$ even (when $y$ is conjugate to $y\inv$) both row $5$ and row $6$ 
separately constitute the length $6$-orbit of the quotient $(\zet/4\zet)/p_3$ 
shown in \eqref{eq:Z4quotorbits}, while for $p$ odd (when $y\inv$ is conjugate to $xy$) 
the elements in these two rows form a single length-12 orbit as follows:

     \negspace{18}
  \be
  \diagQppppPodddiconj {2.1} {1.0} {1.5} {0.3}
  \label{gluedmaporbit}
  \ee
The set $\mu_1\inv(c_e)$ thus consists of $\divfn(2p){+}1$ orbits for odd $p$, 
and of $\divfn(2p){+}2$ orbits for even $p$.
This concludes the description of the \slz-orbits in $\mu_1\inv(c_e)$. 

Before turning to the remaining $\mcg_1$-orbits, let us see how the rest of 
$C_{Q_{4p}}^2$ fibers over $C_{Q_{4p}}'$. We have
  \be
  \begin{array}{ll}
  \mu_1\inv(c_{x^{2k}}) \!\!& = \big\{ (x^k|y), (x^{p-k}|y), (y|x^k), (y|x^{p-k}),
  (xy|x^k), (xy|x^{p-k}) \,,
  \nxl2
  & ~~~ (x^k|xy), (x^{p-k}|xy), (y|x^ky), (y|x^{p-k}y), (xy|x^{k+1}y), (xy,x^{p-k+1}y)
  \big\}
  \nxl2
  & \hspace*{11em} \text{for } \left\{ \begin{array}{ll}
  k\in \{1,...\,,\frac{p-1}{2}\} & \text{if } p \iN 2 \zet{+}1 \,,
  \nxl1
  k\in\{1,...\,,\frac{p}{2}-1\} & \text{if } p \iN 2 \zet \,,
  \end{array} \right.
  \Nxl3
  \mu_1\inv(c_{y^2}) \!\!& 
  = \big\{ (x^{p/2}|y), (y|x^{p/2}), (xy|x^{p/2}), (x^{p/2}|xy), (y|x^{p/2}y), (xy|x^{p/2+1}y) \big\}
  \nxl2
  & \hspace*{19.4em} \text{if } p \iN 2 \zet \,,
  \end{array}
  \ee
Consider first the case $k \eq 2l{+}1$. The set $\mu_1\inv(c_{x^{2k}})$ then forms 
a single \slz-orbit:
 
                 \negspace{12}
\be
\diagQuaternionsB {2.2} {1.6} {0.8} {7.2} {5.5} {1.5} {0.3}
\ee

\medskip
\noindent
Next consider the case $k \eq 2l$, first with $p$ odd. The set 
$\mu_1\inv(c_{x^{4l}})$ again consists of a single orbit, shown in the following figure:

                 \negspace{12}
\be
\diagQuaternionsC {1.9} {1.5} {-6.5} {1.4} {-6.4} {0.7}
\ee
If instead $p$ is even, then the diconjugacy classes that have representatives in the subgroup 
$H \eq \langle y,x^2\rangle$ form proper invariant subsets of $\mu_1\inv(c_{x^{2k}})$.
By Proposition \ref{prop:CenterSubgroupInv}\,(ii) 
it follows that the set $\mu_1\inv(c_{x^{4l}})$ for $2l \,{\ne}\, p$ consists of two orbits,
as shown in the following figure:

                 \negspace{15}
  \be
  \diagQuaternionsD {1.8} {1.4} {-5.8} {1.5} {-6.06} {0.9} {-4.0} {2.8}
  \ee
The same argument implies that for even $p$ the orbits on $\mu_1\inv(c_{y^2})$ 
depend on $p\bmod 4$, as shown in the following pictures.

                 \negspace{33}
  \be
  \diagQuaternionsE {1.9} {1.1} {-4.7} {-0.5}
  \ee

                 \negspace{27}
  \be
  \diagQuaternionsF {60} {1.3cm} {-3.9} {0}  \hspace*{4em}
  \ee

\medskip
\noindent
To summarize, when $p$ is odd, then the set $C^2_{Q_{4p}}$ consists of $\divfn(2p)+(p{+}3)/2$ 
$\mcg_1$-orbits. When $p \,{\equiv}\, 0\bmod{4}$ there are $\divfn(2p)+(3p+8)/4$ orbits, and 
when $p \,{\equiv}\, 2\bmod{4}$ there are $\divfn(2p)+(3p{+}6)/4$ orbits.


\subsection{Genus 2 and 3}

\subsubsection{\boldmath{$S_3$}}

Recall that $S_3$ is isomorphic to $D_3$. We denote the non-trivial elements of $S_3$ as
  \be
  a_1:=(23)\,,\quad a_2:=(13)\,,~\quad a_3:=(12)\,,~\quad b:=(123)\,,~\quad b\inv=(132) \,.
  \ee
Then the three conjugacy classes of $S_3$ are $c_e \eq \{e\}$, $c_a \eq \{a_1,a_2,a_3\}$
and $c_b \eq \{b,b\inv\}$, while $C'_{S_3} \eq \{c_e,c_b\}$. 

\paragraph{\boldmath{$\mcg_2$}-orbits:}

Clearly, $\im(\mu_2) \,{=}\, C'_{S_3}$. Using GAP we obtain
  \be
  |C^4_{S_3}| = 251\,,~\quad |\mu_2\inv(c_e)| = 116\,,~\quad |\mu_2\inv(c_b)| = 135\,.
  \ee
Let us determine the orbits of $\mu_2\inv(c_e)$ and $\mu_2\inv(c_b)$.\,%
 \footnote{~An explicit list of the sets $\mu_2\inv(c_e)$ and $\mu_2\inv(c_b)$
 is given in Tables 2 and 3 of {\tt arXiv:1506.03263v1}.}
The quotient of the mapping class group that acts effectively on $\mu_2\inv(c_e)$ has order
$394\,419\,752\,309\,411\,020\,800$; it is a centerless group, i.e.\ the hyperelliptic 
involution (the only non-trivial central element in $\mcg_2$) is represented trivially.


Clearly $d_1 \eq \big( (e|e),(e|e) \big) \iN \mu_2\inv(c_e)$ is a fixed point,
Further, there are 40 $4$-conjugacy classes $d_2$\,--\,$d_{41}$ which fall into the 
conditions of both Proposition \ref{prop:CenterSubgroupInv}\,(ii) and Theorem 
\ref{thm:gTupleOrbits}, implying that this set is mapping class invariant, and 
15 classes $d_{42}$\,--\,$d_{56}$ that satisfy the conditions of Theorem 
\ref{thm:gTupleOrbits} and thus again form a mapping class invariant subset, as well 
as $60$ further classes $d_{57}$\,--\,$d_{166}$. Let us denote these three subsets 
by indicating their order, as $X_{15}$, $X_{40}$, and $X_{60}$, respectively. 

The set $X_{15}$ consists of a single $\mcg_2$-orbit that coincides with the non-trivial 
symplectic orbit on $(\zet/2\zet)^4$. Inspection reveals that all generators 
have order two, and indeed are
products of disjoint transpositions. The group generated by these permutations, 
a quotient of the image of the quantum representation, has order $720$ and is isomorphic 
to $S_6$. The stabilizer of any given $4$-conjugacy class has order $48$ and is isomorphic
to $\zet/2\zet\Times S_4$. An alternative characterization of the mapping class orbit 
$X_{15}$ is that it is equivalent 
to the orbit of the action of $S_6$ on two-element subsets of $\{1,2,3,4,5,6\}$.

The set $X_{40}$ consists of a single $\mcg_2$-orbit, too, coinciding with the 
non-trivial orbit in the quotient $\big(\zet/3\zet\big)^4/\mathrm{p}_{2,2}$.
The group generated by the action of $T^{(i)}$ and $U^{(i)}$, $i\eq 1,2$, and $L$ 
on $X_{40}$ has order $25\,920$ and is isomorphic to the simple group $O(5,3)$.
The stabilizer of any one of the elements of $X_{40}$ has order $648$.
Also the $\mcg_2$-set $X_{60}$ consists of a single orbit. 
Pertinent data for these orbits are summarized in the following table: 
  \begin{center}
  \begin{tabular}{cccccc}
   Name	    &  Representative               & Length  & $\mcg_2$-quotient   &   Stabilizer &   \Symplectic
   \\[3pt] \hline ~\\[-8pt]
   --       &  $\big( (e|e),(e|e)\big)$	    &   $1$   &   $\{e\}$         &  --          &   yes
   \\[3pt]
   $X_{15}$ &  $\big( (a_1|e),(e|e)\big)$   &	$15$  & $S_6$     & $\zet/2\zet\times S_4$ &   yes
   \\[3pt]
   $X_{40}$ &  $\big( (b|e),(e|e)\big)$	    &   $40$  & $O(5,3)$            &  $648$       &   yes
   \\[3pt]
   $X_{60}$ &  $\big( (a_1|e),(a_2|e)\big)$ &	$60$  & $15\, 216\, 811\, 431\, 690\, 240$ &
                                                             $253\, 613\, 523\, 861\, 504$ &   no
  \end{tabular}
  \end{center}

\smallskip
\noindent
This table lists the following 
data for each orbit: a representative $4$-tuple of one $4$-con\-ju\-gacy class generating the 
orbit; the length of the orbit; information about the quotient of $\mcg_2$ acting 
effectively on the orbit, either in the form of the name of a finite group isomorphic 
to the quotient or as the order of the quotient; and information about the stabilizer of one 
of the elements of the orbit, either as the name of a group isomorphic to the stabilizer 
or as the order of the stabilizer. The final datum indicates whether the $\mcg_2$-action 
on the orbit factors through the symplectic group (yes) or not (no) or, equivalently,
whether the Torelli group $\tor_2$ acts trivially (yes) or not (no).

The Torelli group $\tor_2$ acts non-trivially on $X_{60}$; the action of the element 
$T_c$ from \eqref{eq:genus2Torelliel} is indicated in the following figure:
\begin{center}
\begin{tikzpicture}
[commutative diagrams/every diagram]
  \node	at  (-5,5)  {$X_{60}$, $T_c$:};
  \foreach \x in {0,...,59}
  { \node  (n\x)  at  (6*\x-180:5.1cm)   {\scriptsize$\bullet$}; }
  \foreach \y in {57,...,116}
  { \node  (m\y)  at  (6*\y-162:5.5cm) {\scriptsize$d_{\y}$}; }
  \path[blue,commutative diagrams/.cd, every arrow, every label]
  (n14) edge  (n31)
  (n31) edge  [bend right]  (n28)
  (n28) edge  (n14)
  (n15) edge  (n55)
  (n55) edge  (n33)
  (n33) edge  (n15)
  (n19) edge  (n44)
  (n44) edge  (n32)
  (n32) edge  (n19)
  (n23) edge  (n49)
  (n49) edge  (n36)
  (n36) edge  (n23)
  (n29) edge  [bend left]  (n35)
  (n35) edge  (n51)
  (n51) edge  (n29)
  (n30) edge  (n54)
  (n54) edge  (n42)
  (n42) edge  (n30)
  (n34) edge  (n56)
  (n56) edge  [bend right] (n48)
  (n48) edge  (n34)
  (n40) edge  [bend left] (n43)
  (n43) edge  [bend right] (n41)
  (n41) edge  [bend right](n40)
  (n53) edge  [bend left](n57)
  (n57) edge  [bend left] (n59)
  (n59) edge  [bend right](n53)
  ;
\end{tikzpicture}
\end{center}

\medskip

Finally we consider the set $\mu_2\inv(c_b)$, which has order 135. In this case none of 
the results of Section \ref{sec:mcg2Nconj} 
is applicable, so we cannot draw any immediate conclusions regarding the existence of 
invariant subsets. A GAP calculation shows that $\mu_2\inv(c_b) \,{=:}\, X_{135} $ is a 
single $\mcg_2$-orbit, and the quotient of $\mcg_2$ acting effectively on $X_{135}$ has 
order $102\,662\,358\,058\,319\,877\,641\,002\,337\,100\,103\,680$. This group is 
centerless and non-simple, with derived subgroup of index $2$. The Torelli group 
$\tor_2$ acts non-trivially on $X_{135}$.
  
\smallskip

\paragraph{\boldmath{$\mcg_3$}-orbits:} 
We have $|C^6_{S_3}| \eq 8\,051$, $|\mu_3\inv(c_e)| \eq 2\,948$ and $|\mu_3\inv(c_b)| 
\eq  5\,103$. The $6$-con\-ju\-ga\-cy classes in  $\mu_3\inv(c_e)$ represented by elements 
of the form $\big( (a_1^{k_1}|a_1^{l_1}), (a_1^{k_2}|a_1^{l_2}), (a_1^{k_3}|a_1^{l_3})\big)$ 
make up the two $\mcg_3$-orbits on $( \zet/2\zet)^{6}$, one fixed point and one orbit 
of length $63$. Replacing $a_1$ by $b$ we get instead the non-trivial orbit on 
$(\zet/3\zet)^6/\mathrm{p}_{2,3}$, which is of length $364$. The remaining 
$6$-conjugacy classes in $\mu_3\inv(c_e)$ form a single orbit of length $2\,520$; 
calculating the action on this orbit of the element \eqref{eq:TorelN3} shows that 
the Torelli group $\tor_3$ acts non-trivially. Some features of these $\mcg_3$-orbits 
are summarized in the following table:
   \begin{center}
   \begin{tabular}{ccccc}
   Representative                   &  Length   &   $\mcg_3$-quotient  &  Stabilizer        & \Symplectic
   \\[3pt] \hline ~\\[-8pt]
   $\big((e|e),(e|e),(e|e)\big)$    &  $1$      &  $\{e\}$             &  --                &   yes
   \\[3pt]
   $\big( (a_1|e),(e|e),(e|e)\big)$ &  $63$     &  $1\,451\,520$       &  $23\,040$         &	yes
   \\[3pt]
   $\big( (b|e),(e|e),(e|e)\big)$   &  $364$    &  $4\,585\,351\,680$  &  $\,12\,597\,120$  &   yes
   \\[3pt]
   $\big( e|e),(a_1|e),(e|a_3)\big)$&  $2\,520$ &  $\sim 10^{284}$     &  $\sim 10^{281}$   &   no
   \end{tabular}
   \end{center}
The set $\mu_3\inv(c_b)$ consists of a single $\mcg_3$-orbit:
   \begin{center}
   \begin{tabular}{ccccc}
   Representative                     &   Length  &  $\mcg_3$-quotient  &  Stabilizer       & \Symplectic
   \\[3pt] \hline ~\\[-8pt]
   $\big((a_3|a_1),(e|e),(e|e)\big)$  & $5\, 103$ &  $\sim 10^{420}$    &  $\sim 10^{417}$  &  no
   \end{tabular}
   \end{center}


\subsubsection{\boldmath{$D_4$}}

We refer to Section \ref{sec:Dn} for notation; recall that $C'_{D_4} \eq \{ c_e,c_{x^2}\}$
and that $x^2$ is central.

\paragraph{\boldmath{$\mcg_2$}-orbits:}
We have $|C^4_{D_4}| \eq 1\, 216$, distributed over $\mu_2\inv(c_e)$ of order $736$ and 
$\mu_2\inv(c_{x^2})$ of order $480$. Since $x^2$ is central, it follows from 
Theorem \ref{thm:Torelli}\,(ii) that the Torelli group $\tor_2$ acts trivially on 
$C^4_{D_4}$. The set $\mu_2\inv(c_e)$ consists of $11$ $\mcg_2$-orbits. The $4$-con\-ju\-gacy 
classes represented by elements of the form $\big( (x^{k_1}|x^{l_1}),(x^{k_2}|x^{l_2})\big)$ 
describe the three symplectic orbits, of lengths $1$, $15$ and $120$, of the quotient 
$(\zet/4\zet)^4/\mathrm{p}_{3,2}$. The diconjugacy classes of the form 
$\big((y^{k_1}|y^{l_1}),(y^{k_2}|y^{l_2})\big)$ constitute the non-trivial symplectic 
orbit in $(\zet/2\zet)^4$, of length $15$; there is an isomorphic symplectic orbit 
consisting of diconjugacy classes of the form 
$\big(((xy)^{k_1}|(xy)^{l_1}),((xy)^{k_2}|(xy)^{l_2})\big)$. 
In addition there are two orbits of length 45, two of length 60, and two of length 180.
Some properties of the $\mcg_2$-orbits are listed in the following table:
   \begin{center}
   \begin{tabular}{ccccc}
   Representative                &  Length  & $\mcg_2$-quotient &  Stabilizer               & \Symplectic
   \\[3pt] \hline ~\\[-8pt]
   $\big((e|e),(e|e)\big)$       &  $1$	    &  $\{e\}$          &  --                       &  yes
   \\[3pt]
   $\big( (y|e),(e|e)\big)$      &  $15$    &  $S_6$            &  $\zet/2\zet \times S_4$  &  yes
   \\[3pt]
   $\big( (x^2|e),(e|e)\big)$    &  $15$    &  $S_6$            &  $\zet/2\zet \times S_4$  &  yes
   \\[3pt]
   $\big( (xy|e),(e|e)\big)$     &  $15$    &  $S_6$            &  $\zet/2\zet \times S_4$  &  yes
   \\[3pt]
   $\big( (x|e),(e|e)\big)$      &  $120$   &  $368\, 640$      &  $3\, 072$                &  yes
   \\[3pt]
   $\big( (y|e),(x^2|e)\big)$    &  $45$    &  $S_6$            &  $\zet/2\zet\times D_4$   &  yes
   \\[3pt]
   $\big( (x^3y|e),(x|e)\big)$   &  $45$    &  $S_6$            &  $\zet/2\zet\times D_4$   &  yes
   \\[3pt]
   $\big( (y|x^2),(e|e)\big)$    &  $60$    &  $S_6$            &  $D_6$                    &  yes
   \\[3pt]
   $\big( (x^3y|x^2),(e|e)\big)$ &  $60$    &  $S_6$            &  $D_6$                    &  yes
   \\[3pt]
   $\big( (y|e),(x|e)\big)$      &  $180$   &  $368\, 640$      &  $2\, 048$                &  yes
   \\[3pt]
   $\big( (y|x^2),(x|e)\big)$    &  $180$   &  $368\, 640$      &  $2\, 048$                &  yes
   \end{tabular}
   \end{center}
The set $\mu_2\inv(c_{x^2})$ consists of a single $\mcg_2$-orbit: 
\begin{center}
   \begin{tabular}{ccccc}
   Representative             &  Length  &  $\mcg_2$-quotient &  Stabilizer  & Symplectic
   \\[3pt] \hline ~\\[-8pt]
   $\big( (y|x),(e|e)\big)$   &  $480$	 &  $11\, 796\, 480$  &  $24\, 576$  &  yes
   \end{tabular}
\end{center}

\paragraph{\boldmath{$\mcg_3$}-orbits:} 
We have $|C^6_{D_4}| \eq 68\,608$, $|\mu_3\inv(c_e)| \eq 36\,352$ and $|\mu_3\inv(c_{x^2})| \eq 32\,256$. 
The set $\mu_3\inv(c_e)$ consists of $11$ $\mcg_3$-orbits. The $6$-conjugacy classes represented 
by tuples of the form $\big( (x^{k_1}|x^{l_1}),(x^{k_2}|x^{l_2}),(x^{k_3}|x^{l_3})\big)$ split 
into three orbits: the trivial fixed point, an orbit of length $63$ equivalent to the 
non-trivial orbit on $(\zet/2\zet)^6$, and an orbit of length $2016$ generated 
from $\big( (x|e),(e|e),(e|e)\big)$. The two orbits generated from $\big( (y|e),(e|e),(e|e)\big)$ 
and $\big( (xy|e),(e|e),(e|e)\big)$ are equivalent to the non-trivial orbit of 
$( \zet/2\zet)^6$, too. There are in addition two orbits 
of length $945$, two of length $1\, 008$, and two of length $15\,120$.
The quotient acting effectively on the orbit is isomorphic to $O(7,2)$ in all
cases except for three, in the latter cases the group is perfect but not simple.
Some pertinent data are collected in the following table:
\begin{center}
   \begin{tabular}{ccccc}
   Representative                   &  Length     &  $\mcg_3$-quotient          &  Stabilizer  & \Symplectic
   \\[3pt] \hline ~\\[-8pt]
   $\big((e|e),(e|e),(e|e)\big)$    &  $1$        &  $\{e\}$			&  --		&   yes		
   \\[3pt]
   $\big( (x^2|e),(e|e),(e|e)\big)$ &  $63$       &  $O(7,2)$			&  $23\, 040$	&   yes
   \\[3pt]
   $\big( (y|e),(e|e),(e|e)\big)$   &  $63$       &  $O(7,2)$			&  $23\, 040$	&   yes
   \\[3pt]
   $\big( (xy|e),(e|e),(e|e)\big)$  &  $63$       &  $O(7,2)$			&  $23\, 040$	&   yes
   \\[3pt]
   $\big( (x|e),(e|e),(e|e)\big)$   &  $2\, 016$  & $1\, 522\, 029\, 035\, 520$& $754\, 974\, 720$ & yes
   \\[3pt]
   $\big( (y|e),(x^2|e),(e|e)\big)$ &  $945$	  &  $O(7,2)$			&  $1\, 536$	&   yes
   \\[3pt]
   $\big( (xy|e),(x^2|e),(e|e)\big)$&  $945$	  &  $O(7,2)$			&  $1\, 536$	&   yes
   \\[3pt]
   $\big( (y|x^2),(e|e),(e|e)\big)$ &  $1\, 008$  &  $O(7,2)$			&  $1\, 440$	&   yes
   \\[3pt]
   $\big( (xy|x^2),(e|e),(e|e)\big)$&  $1\, 008$  &  $O(7,2)$			&  $1\, 440$	&   yes
   \\[3pt]
   $\big( (y|e),(x|e),(e|e)\big)$   & $15\, 120$
                                         & $49\, 873\, 847\, 435\, 919\, 360$ & $3\, 298\, 534\, 883\, 328$ & no
   \\[3pt]
   $\big( (y|x^2),(x|e),(e|e)\big)$
                            & $15\, 120$ & $49\, 873\, 847\, 435\, 919\, 360$ & $3\, 298\, 534\, 883\, 328$ & no
   \\[3pt]
   \end{tabular}
\end{center}
The set $\mu_3\inv(c_{x^2})$ is a single $\mcg_3$-orbit. The quotient acting effectively
on it is a non-sim\-ple perfect group with trivial center. The stabilizer of any element
in this orbit is a non-perfect group with center 
$(\zet/2\zet){}^{\times 5}$ and with a derived subgroup of index $16$. 
A few more details are given in the following table:
\begin{center}
   \begin{tabular}{ccccc}
   Representative                 &  Length    &  $\mcg_3$-quotient                  &  Stabilizer    & \Symplectic
   \\[3pt] \hline ~\\[-8pt]
   $\big((y|x),(e|e),(e|e)\big)$ & $32\, 256$ & $3\,191\,926\,235\,898\, 839\, 040$ & $98\,956\,046\,499\,840$ & no
   \end{tabular}
\end{center}


\subsubsection{\boldmath{$Q_8$}}

For pertinent notation see section \ref{sec:Q4p}; recall that $C'_{Q_8} \eq \{ c_e,c_{y^2}\}$ 
and that $y^2$ is central.

\paragraph{\boldmath{$\mcg_2$}-orbits:}
We have $\im(\mu_2) \eq C'_{Q_8}$, and $|C^4_{Q_8}| \eq 1\, 216$, which is distributed over $C'_{Q_8}$ 
as $|\mu_2\inv(c_e)| \eq 736$ and $|\mu_2\inv(c_{y^2})| \eq 480$. Again it follows from 
Theorem \ref{thm:Torelli}\,(ii) that the Torelli group $\tor_2$ acts trivially on $C^4_{Q_8}$.

The set $\mu_2\inv(c_e)$ consists of $6$ $\mcg_2$-orbits. The $4$-conjugacy classes 
represented by elements of the form $\big( (x^{k_1}|x^{l_1}),(x^{k_2}|x^{l_2})\big)$ 
split into the three orbits of lengths $1$, $15$, and $120$ corresponding to the quotient 
$(\zet/4\zet)^4/\mathrm{p}_{3,2}$. The $4$-conjugacy classes represented 
by elements $\big( (y^{k_1}|y^{l_1}),(y^{k_2}|y^{l_2})\big)$ with 
$\mathrm{gcd}(k_1,l_1,k_2,l_2,4) \eq 1$ describe another orbit of length $120$ inside
$(\zet/4\zet)^4/\mathrm{p}_{3,2}$, and yet another orbit of the same type is obtained 
from the latter by replacing $y$ with $xy$ in the representatives. 
The three orbits of length $120$ are isomorphic. Finally there is an 
orbit of length $360$, generated from the $4$-conjugacy class represented by 
$\big( (x|e),(y|e)\big)$. Some properties of these and
of $\mu_2\inv(c_{y^2}) \,{\subset}\, C^4_{Q_8}$ are given in the next two tables:
\begin{center}
   \begin{tabular}{ccccc}
   Representative              &  Length & $\mcg_2$-quotient &  Stabilizer  &  \Symplectic
   \\[3pt] \hline ~\\[-8pt]
   $\big((e|e),(e|e)\big)$     &  $1$    &  $\{e\}$          &  --          &   yes
   \\[3pt]
   $\big( (y^2|e),(e|e)\big)$  &  $15$   &  $S_6$	     &  $\zet/2\zet \times S_4$ & yes
   \\[3pt]
   $\big( (x|e),(e|e)\big)$    &  $120$  &  $368\, 640$      &  $3\, 072$   &	yes
   \\[3pt]
   $\big( (y|e),(e|e)\big)$    &  $120$  &  $368\, 640$      &  $3\, 072$   &	yes
   \\[3pt]
   $\big( (xy|e),(e|e)\big)$   &  $120$  &  $368\, 640$	     &  $3\, 072$   &	yes
   \\[3pt]
   $\big( (x|e),(y|e)\big)$    &  $360$  &  $11\, 520$       &  $5\, 760$   &	yes
   \end{tabular}
   \end{center}
\begin{center}
   \begin{tabular}{ccccc}
   Representative            &  Length  &  $\mcg_2$-quotient  &  Stabilizer  & \Symplectic
   \\[3pt] \hline ~\\[-8pt]
   $\big( (x|y),(e|e)\big)$  &  $480$   &  $11\, 796\, 480$   &  $24\, 576$  &  yes
   \end{tabular}
   \end{center}

\begin{rem}\label{rem:NoPGroup}
A result of \cite{etrw} states that if $G$ is a $p$-group, then the images of the 
quantum representations of $\mcg_{0,n}$ obtained from $\DG$ are $p$-groups as well.
Now $Q_8$ is a $2$-group, while the quotients of $\mcg_2$ acting effectively 
on the orbits of lengths $360$ and $480$ are not. In fact,
as verified by computer, they aren't $p$-groups for any $p$. Thus the result 
of \cite{etrw} does not generalize to higher genus.
\end{rem}

\paragraph{\boldmath{$\mcg_3$}-orbits:} 
Similarly to the case of $D_4$ we have $|C_{Q_8}^6| \eq 68\,608$, 
$|\mu_3\inv(c_e)| \eq 36\,352$ and $|\mu_3\inv(c_{y^2})| \eq 32\,256$. 
The set $\mu_3\inv(c_e)$ is distributed over $6$ $\mcg_3$-orbits. The $6$-conjugacy classes 
represented by tuples of the form $\big( (x^{k_1}|x^{l_1}), (x^{k_2}|x^{l_2}), (x^{k_3}|x^{l_3}) \big)$ 
make up the three orbits of the quotient $(\zet/4\zet)^6/\mathrm{p}_{3,3}$: the 
trivial fixed point, an orbit of length $63$ generated from $\big( (x^2|e),(e|e),(e|e)\big)$, 
and an orbit of length $2\, 016$ generated from $\big( (x|e), (e|e), (e|e)\big)$. Moreover, 
the two tuples $\big( (y|e), (e|e), (e|e)\big)$ and $\big( (xy|e), (e|e), (e|e)\big)$ both 
generate orbits of length $2\, 016$ isomorphic to the first one. Finally the $6$-conjugacy 
class represented by $\big( (x|e),(y|e),(e|e)\big)$ generates an orbit of length $30\,240$. 
The quotients of $\mcg_3$ acting effectively on the non-trivial orbits are perfect, and 
in the case of the length-$63$ orbit even simple; for the length-$30\, 240$ orbit the group 
is \emph{not} a $p$-group for any $p$.
Here is a summary of pertinent data:
\begin{center}
   \begin{tabular}{ccccc}
   Representative                   &  Length    &  $\mcg_3$-quotient           &  Stabilizer        & \Symplectic
   \\[3pt] \hline ~\\[-8pt]
   $\big((e|e),(e|e), (e|e)\big)$   &  $1$       &  $\{e\}$                     &  --                &  yes
   \\[3pt]
   $\big( (y^2|e),(e|e),(e|e)\big)$ &  $63$      &  $O(7,2)$                    &  $23\, 040$        &  yes
   \\[3pt]
   $\big( (x|e),(e|e), (e|e)\big)$  &  $2\, 016$ &  $1\, 522\, 029\, 035\, 520$ &  $754\, 974\, 720$ &  yes
   \\[3pt]
   $\big( (y|e),(e|e), (e|e)\big)$  &  $2\, 016$ &  $1\, 522\, 029\, 035\, 520$ &  $754\, 974\, 720$ &  yes
   \\[3pt]
   $\big( (xy|e),(e|e), (e|e)\big)$ &  $2\, 016$ &  $1\, 522\, 029\, 035\, 520$ &  $754\, 974\, 720$ &  yes
   \\[3pt]
   $\big( (x|e),(y|e), (e|e)\big)$  &  $30\, 240$& $49\, 873\, 847\, 435\, 919\, 360$ &	$1\, 649\, 267\, 441\, 664$	& no
   \end{tabular}
   \end{center}
The set $\mu_3\inv(c_{y^2})$ consists of a single $\mcg_3$-orbit. The quotient of
$\mcg_3$ acting effectively is perfect with trivial center, but not simple.
The stabilizer of any point is not perfect, has center $(\zet/2\zet)^{\times 5}$ and a 
derived subgroup of index $16$. We have:
\begin{center}
   \begin{tabular}{ccccc}
   Representative                   &  Length    &  $\mcg_3$-quotient  &  Stabilizer  & \Symplectic
   \\[3pt] \hline ~\\[-8pt]
   $\big( (x|y), (e|e), (e|e)\big)$ & $32\, 256$ & $3\,191\, 926\, 235\, 898\, 839\, 040$
                                                           & $98\, 956\, 046\, 499\, 840$ &  no
   \end{tabular}
   \end{center}


\subsection{\boldmath{$A_5$}}

For the non-abelian simple group $A_5$ of order $60$ we have 
$Z(A_5) \eq \langle e\rangle$, $A_5' \eq A_5^{}$ and $n_{A_5} \eq 30$, and there are
five conjugacy classes $c_e \eq \{e\}$ and $c_a$, $c_b$,  $c_c$,  $c_d$ containing, 
respectively, the group elements
  \be
  a := {(1\,2)(3\,4)}    \,,\quad
  b := {(1\,2\,3)}       \,,\quad
  c := {(1\,2\,3\,4\,5)} \,,\quad
  d := {(1\,2\,3\,5\,4)} \,.
  \ee
Every element of $A_5$ is a commutator, so $C'_{\!A_5} \eq C_{\!A_5}$.
The chosen representatives of the conjugacy classes have centralizers as follows:
  \be
  \begin{array}{rl}
  Z_{(1\,2)(3\,4)}	\!&= \langle (1\,2)(3\,4), (1\,3)(2\,4)\rangle\cong \zet/2\zet\Times\zet/2\zet \,,
  \\[4pt]
  Z_{(1\,2\,3)}	        \!&= \langle (1\,2\,3)\rangle       \cong \zet/3\zet	 \,,
  \\[4pt]
  Z_{(1\,2\,3\,4\,5)}	\!&= \langle (1\,2\,3\,4\,5)\rangle \cong	\zet/5\zet \,,
  \\[4pt]
  Z_{(1\,2\,3\,5\,4)}	\!&= \langle (1\,2\,3\,5\,4)\rangle \cong	\zet/5\zet \,.
  \eear
  \ee
The formulas \eqref{eq:NumberTBlocks} and \eqref{eq:Number0ptTBlocks} result in
$|C^2_{A_5}| \eq 77$ and $|\mu_1\inv(c_e)| \eq 22$.


\subsubsection{\boldmath{$\mcg_1$}-orbits}

The set $\mu_1\inv(c_e)$ consists of five $\mcg_1$-orbits.
The pre-images partition $C^2_{A_5}$ according to
\be
  \begin{array}{rl}
  \mu_1\inv(c_e) = \!\! & \big\{ \,  (e|e), (c|e), (c^2|e), (e|c), (e|c^2), (c|c), (c|c^2),
  \\[4pt]
  &  (c|c^3), (c|c^4), (c^2|c^2), (c^2|c^4), (c^2|c^3), (c^2|c),
  \\[4pt]
  &  (b|e), (e|b), (b|b), (b|b^2),(a|e), (e|a), (a|a),
     (a|a^b), (a|{}^ba) \, \big\} \,.
  \end{array}
  \label{eq:A5dicCe}
\ee
Here the first two rows constitute the quotient 
$(\zet/5\zet)^2/\mathrm{p}_4$, as shown in \eqref{fig:Z5modP4}. The four 
diconjugacy classes represented by $(b|e)$, $(e|b)$, $(b|b)$ and $(b|b^2)$ make up 
the non-trivial orbit of the quotient $(\zet/3\zet)^2/\mathrm{p}_2$, illustrated 
in \eqref{fig:Z3modP2}, while the classes represented by $(a|e)$, $(e|a)$ and $(a|a)$
form the non-trivial orbit of $(\zet/2\zet)^2$, depicted in \eqref{fig:Z2orbit}. 
Finally the classes represented by $(a|a^b)$ and $(a|{}^ba)$ form a length-2 orbit 
for which both $S$ and $T$ act as the elementary transposition. 
The set $\mu_1\inv(c)$ for $c \iN \{ c_a, c_b, c_c, c_d \}$ are given by
  \be
  \begin{array}{ll}
  \mu_1\inv(c_a) = \!\!\! &
  \big\{\, (b^2|a), (b^2|b^a), (b^2|b^c), (b|a), (b^2|ac), (b^2|b^d), (a|b^2), (a|b) \,\big\} \,,
  \Nxl3
  \mu_1\inv(c_b) = \!\!\!& \big\{ \, (b|a^c), (a|a^{(c^2)}), (a^c|b), (b|b^{(c^2)}),
  (b|c), (b^2|d), (b|(b^2)^{(c^2)}), (b|d),
  \\[4pt]
  &~~ (b^2|c), (a|c^2), (a|c^d), (c|b), (c|b^2), (c|a^{(b^2)}), (c|d), (c^4|d),
  \\[4pt]
  &~~ (d|b^2), (d|b), (d|a^{(b^2)}), (d|c), (d|c^4) \, \big\} \,,
  \Nxl3
  \mu_1\inv(c_c) = \!\!\!& \big\{ \, (a|d), (d|a), (a|ad), (b|bd^2), (b|d^2), (b|d^3), (a|c), 
  \\[4pt]
  &~~  (a|bcb), (c|a), (c|[c,b]), (c|[b,c]), (c|d^2), (c|d^3) \, \big\} \,,
  \Nxl3
  \mu_1\inv(c_d) = \!\!\!& \big\{ \, (a|ad^2), (a|d^2), (d^2|a), (b|bc^2), (b|c^3), (b|c^2), (a|cb), 
  \\[4pt]
  &~~ (a|b^2c), (bc^3|a), (c^3|b), (c^2|b), (d|c^2), (d|c^3) \, \big\} \,.
  \eear
  \label{eq:A5dicCa}
  \ee

The set $\mu_1\inv(c_a)$  and consists of a single orbit, illustrated as follows.

                 \negspace{19}
  \be
  \diagSfiveMuinvCa {1.9} {0.6} {-4.2} {1.2}
  \ee
The set $\mu_1\inv(c_b)$
is preserved by the non-trivial outer automorphism $\varphi$ of $A_5$. The diconjugacy 
classes $(b|a^c)$, $(a^c|b)$ and $(a|a^{(c^2)})$ are fixed points under $\varphi$, while
the others form orbits of length two.
Proposition \ref{prop:OutAction} implies that $\mu_1\inv(c_b)$ 
splits into two $\mcg_1$-orbits. One orbit has length 3 and looks as follows:

              \negspace{33}
\be
  \begin{tikzpicture}[scale=1.5,commutative diagrams/every diagram]
  \node (a)  [listobj]  at  (-2,0)  {\scriptsize$(b|a^c)$};
  \node (b)  [listobj]  at  (0,0)   {\scriptsize$(a^c|b)$};
  \node (c)  [listobj]  at  (2,0)   {\scriptsize$(a|a^{(c^2)})$};
  \path[commutative diagrams/.cd, every arrow, every label]
  (a)  edge  [\red,out=145,in=215,loop=5mm]  (a)
  (a)  edge  [\blue,<-> ]  (b)
  (b)  edge  [\red,<-> ]   (c)
  (c)  edge  [\blue,out=-35,in=35,loop=5mm]  (c)
  ;
  \end{tikzpicture}
  \ee

              \negspace{17}
\noindent
The remaining $18$ diconjugacy classes in $\mu_1\inv(c_b)$ form another $\mcg_1$-orbit:

              \negspace{17}
  \be
  \diagSfiveMuinvCn {1.2} {1.2} {1.6} {2.3} {1.0}
  \ee
 
\noindent
Concerning $\mu_1\inv(c_c)$, note that the subgroup of $A_5$ generated by $a$ and $d$
is isomorphic to $D_5$. Proposition \ref{prop:CenterSubgroupInv}\,(ii) implies
that the diconjugacy classes $(a|d)$, $(d|a)$ and $(a|ad)$ form an invariant subset.
In fact, there are exactly two orbits: 

              \negspace{31}
\be
  \begin{tikzpicture}[scale=1.5,commutative diagrams/every diagram]
  \node (a)  [listobj]  at  (-2,0)  {\scriptsize$(d|a)$};
  \node (b)  [listobj]  at  (0,0)   {\scriptsize$(a|d)$};
  \node (c)  [listobj]  at  (2,0)   {\scriptsize$(a|ad)$};
  \path[commutative diagrams/.cd, every arrow, every label]
  (a)  edge  [\red,out=145,in=215,loop=5mm]  (a)
  (a)  edge  [\blue,<->]  (b)
  (b)  edge  [\red,<->]   (c)
  (c)  edge  [\blue,out=-35,in=35,loop=5mm]  (c)
  ;
  \end{tikzpicture}
\ee

              \negspace{19}
  \be
  \diagSfiveMuinvCc {2.0} {1.4} {1.3} {0.82}
  \ee

\bigskip
\noindent
Finally, $\mu_1\inv(c_d)$ is isomorphic to $\mu_1\inv(c_c)$ as a $\mcg_1$-set. This 
follows from Proposition \ref{prop:OutAction}, since the non-trivial outer automorphism 
$\varphi$ of $A_5$ induces a bijection between $\mu_1\inv(c_c)$ and $\mu_1\inv(c_d)$.


\subsubsection{\boldmath{$\mcg_2$}-orbits}

The number of $4$-conjugacy classes is $|C^4_{A_5}| \eq 216\,341$, and these are distributed
according to $|\mu_2\inv(c_e)| \eq 5\,110$, $|\mu_2\inv(c_a)| \eq 50\,432$, 
$|\mu_2\inv(c_b)| \eq 72\,549$, $|\mu_2\inv(c_c)| \eq |\mu_2\inv(c_d)| \eq 44\,125$.

The set $\mu_2\inv(c_e)$ consists of $12$ orbits. The non-trivial symplectic orbits have 
lengths $15$ (emanating from $\big( (a|e),(e|e)\big)$), $40$ (generated from 
$\big( (b|e), (e|e)\big)$) and $312$ (emanating from $\big( (c|e),(e|e)\big)$);
these are (the non-fixed points) of the type $(\zet/2\zet)^4$, 
$( \zet/3\zet)^4/\mathrm{p}_{2,2}$ and $(\zet/5\zet)^4/\mathrm{p}_{4,2}$. 
In addition there are orbits of lengths $30$, $40$, $60$, $160$, $180$, $240$, $1\, 152$
and $2\, 880$. Some properties of these orbits are given in the following table.
\begin{center}
   \begin{tabular}{ccccc}
   Representative              &  Length    &  $\mcg_2$-quotient    &  Stabilizer             & \Symplectic
   \\[3pt] \hline ~\\[-8pt]
  $\big((e|e),(e|e)\big)$      &  $1$	    &  $\{e\}$              &  --                     &  yes
  \\[3pt]
  $\big( (a|e),(e|e)\big)$     &  $15$	    &  $S_6$                &  $\zet/2\zet\times S_4$ &  yes
  \\[3pt]
  $\big( (b|e),(e|e)\big)$     &  $40$	    &  $O(5,3)$             &  $648$                  &  yes
  \\[3pt]
  $\big( (c|e),(e|e)\big)$     &  $312$	    &  $O(5,5)$             &  $15\, 000$             &  yes
  \\[3pt]
  $\big( (a|e),(dca|e)\big)$   &  $30$	    &  $S_6$                &  $\zet/2\zet\times A_4$ &  yes
  \\[3pt]
  $\big( (a|dca),(e|e)\big)$   &  $40$	    &  $S_6$                &  $\zet/3\zet\times S_3$ &  yes
  \\[3pt]
  $\big( (a|e), (d^4a|e)\big)$ &  $60$	    &  $15\, 216\, 811\, 431\, 690\, 240$  &  $-$     &  no
  \\[3pt]
  $\big( (b|bac), (b,bac)\big)$&  $160$	    &  $3\, 397\, 386\, 240$               &  $-$     &  no
  \\[3pt]
  $\big( (a|e), (d|e)\big)$   & $180$ 
           & \!\!\!\! $338\, 533\, 188\, 894\, 720\, 000\, 000\, 000\, 000\, 000$ \!\!\!\!
                                                                        &  $-$      &  no
  \\[3pt]
  $\big( (b|e), (bac|e)\big)$  &  $240$	&  $41\, 304\, 285\, 341\, 123\, 293\, 285\, 177\, 098\, 240$  & $-$ & no
  \\[3pt]
  $ \!\!\!\!\! \big((b|cb^2a),(cda|c^2a)\big)$ \!\!\!\!\! & $1\, 152$	&  $-$  
                                                                        &  $-$      &  no
  \\[3pt]
  $\big( (b|e), (c^4|e)\big)$  &  $2\, 880$	&  $-$                  &  $-$      &  no
  \end{tabular}
  \end{center}

\noindent
The set $\mu_2\inv(c_a)$ consists of two orbits, of length $1\, 280$ and $49\, 152$,
respectively, while $\mu_2\inv(c_b)$ consists of three orbits of respective length
$135$, $44\, 712$ and $27\, 702$. None of these orbits is symplectic. Some of their
properties are summarized in the following table.
\begin{center}
	\begin{tabular}{ccccc}
   	Representative              &  Length        &  $\mcg_2$-quotient  & \Symplectic
   	\\[3pt] \hline ~\\[-8pt]
	$\big( (e|e),(b|a^d)\big)$	&	$1\, 280$	&	$\sim 10^{78}$	&	no
	\\[3pt]
	$\big( (e|b),({}^cb|{}^{c^2}a)\big)$	& $49\, 152$	&	--	&	no
	\\[3pt]
	$\big( (e|e),(b|a^c)\big)$	&	$135$	&
	             $102\, 662\, 358\, 058\, 319\, 877\, 641\, 002\, 337\, 100\, 103\, 680$  &  no
	\\[3pt]
	$\big( (e|e),(b|b^{c^3})\big)$	&	$44\, 712$	&	--	&	no
	\\[3pt]
	$\big( (e|b),(a^d|a^c)\big)$	&	$27\, 702$	&	--	&	no
   	  \end{tabular}
\end{center}
Note that also for $S_3$ there is a $\mcg_2$-orbit of length $135$, and the 
quotient of $\mcg_2$ acting effectively on these two orbits have the same order.
It is natural to suspect that these two $\mcg_2$-orbits are in fact isomorphic.
Finally, owing to Proposition \ref{prop:OutAction} the sets $\mu_2\inv(c_c)$ and 
$\mu_2\inv(c_d)$ are isomorphic as $\mcg_2$-sets, so we only list properties of the 
former. There are three orbits; the following table lists some of their properties.
\begin{center}
  \begin{tabular}{ccccc}
   Representative              &  Length        &  $\mcg_2$-quotient      & \Symplectic
  \\[3pt] \hline ~\\[-8pt]
  $\big( (b|d^3),(e|e)\big)$	&	$25\, 000$	&	--	  &   no
  \\[3pt]
  $\big( (a|d),(e|e)\big)$	&	$375$	&	$\sim 10^{55}$	  &   no
  \\[3pt]
  $\big( (a|d),((d^3)^b|e)\big)$	&	$18\, 750$	&   --    &   no
  \end{tabular}
\end{center}

~


\section{The Drinfeld double of a finite group}
\label{sec:DG}

In this appendix we briefly review the (untwisted) Drinfeld double $\DG$ of a finite group $G$, 
together with some aspects of its representation theory that are used in this paper. 
The structure of $\DG$ as a quasitriangular Hopf algebra as well as further information about
its representation theory can be found in \cite{dipR,with2,maso,dongJ,daiL}.

\subsection{The double \texorpdfstring{\boldmath $\DG$}{DG} of a finite group
            \texorpdfstring{\boldmath $G$}{G}}\label{ssec:DGproperties}

Given a finite group $G$, its double $\DG$ is a Hopf algebra in 
$\mathrm{Vec}_f(\Bbbk)$. It has a basis $\{\b g x\}_{g,x\in G}$ in which the 
structure morphisms take the following form:
  \be
  \bearll
  \text{multiplication}~~ 
  m:\quad & \b g x\oti \b h y \,\mapsto\, \b g x \, \b h y := \del{g^x,h}\, \b g {xy} \,,
  \Nxl3
  \text{unit}~~ \eta:\quad &\dsty 1 \,\mapsto\, \sum_{g\in G} \b g e \,,
  \Nxl1
  \text{comultiplication}~~
  \Delta: ~~ &\dsty \b g x \,\mapsto\, \sum_{h\in G} \b h x\oti \b{h\inv g}x \,,
  \Nxl2
  \text{counit}~~  \eps: \quad & \b g x \,\mapsto\, \del{g,e} \,,
  \Nxl2
  \text{antipode}~~ \apo: \quad & \b g x \,\mapsto\, \b {(g\inv)^x}{x\inv} \,.
  \eear
  \label{DGmorphsinbas}
  \ee
$\DG$ is even a factorizable ribbon Hopf algebra, with R-matrix
$ R \eq \sum_{g,h\in G} \b g e \oti \b h g $
and ribbon element $ \nu \eq \sum_{g\in G} \b g {g\inv} $,
and so with monodromy matrix $ Q \eq \sum_{g,h\in G} \b h g \oti \b g {h^g_{}} $
and Drinfeld element $ u \eq \sum_{g\in G} \b g {g\inv} \eq \nu $.
The special group-like element is trivial, $u\inv \nu \eq \eta$. 

Factorizability means that the Drinfeld map 
$f_Q\colon \DGs\To \DG$ furnished by the monodromy matrix is invertible. 
Denoting the basis of $\DGs$ dual to $\{\b g x\}$ by $\{ \d g x\}_{g,x\in G}$, we have
$ f_Q^{}(\d g x) \eq \b x {g^x} $ and $ f_Q\inv(\b g x) \eq \d {{}^gx} g $.
$\DG$ has a two-sided integral $\Lambda \iN \DG$ and a two-sided cointegral 
$\lambda \iN \DGs$ given by
  \be
  \Lambda:\quad 1 \mapsto \sum_{g\in G} \b e g \qquand
  \lambda = \sum_{g\in G}\d g e \,.
  \label{eq:DGint}
  \ee
We denote the group of group-like elements by $\mathcal{G}(\DG)$ and the space of 
central forms by $C(\DG)$, i.e.
  \be
  C(\DG): = \{ \beta\iN \DGs \,|\, \beta(b\, b') \eq \beta(b'\, b)\ \forall\, b,b'\iN \DG\} \,.
  \label{eq:CDG}
  \ee

\begin{prop}~\label{prop:ZCG}\\[1pt]
{\rm{(i)}} 
The center $Z(\DG)$ consists of the elements $\sum_{g,x\in G} a(g,x)\,\b g x$, 
for all functions $a\colon G\Times G\To \Bbbk$ satisfying
  \be
       a(g,x) = 0 ~~\mathrm{for}~~ [g,x] \neq e \qquand
  a(g^z,x^z) = a(g,x)~~ \mathrm{for~all}~~ g,x,z\iN G \,.
  \label{functions-a}
  \ee
{\rm{(ii)}} 
The space $C(\DG)$ of central forms consists of the elements $\sum_{g,x} a(g,x)\,\d g x$ 
for all functions $a\colon G\Times G\To \Bbbk$ satisfying {\rm \eqref{functions-a}}.
\\[2pt]
{\rm{(iii)}} 
The group of group-like elements in $\DG$ is given by
  \be
  \mathcal{G}(\DG) = \big\{ \, \sum_{g\in G} \xi(g)\,\b g x \,|\,
  x\iN G,\, \xi\iN H_1(G,\Bbbk^\times) \, \big\} \,,
  \ee 
and satisfies $\,\mathcal{G}(\DG) \,{\cong}\, G\Times H_1(G,\Bbbk^\times)$.
\end{prop}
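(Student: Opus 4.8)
The three parts are all direct computations in the basis $\{\b g x\}$, using the multiplication, comultiplication and counit from \eqref{DGmorphsinbas}; the plan is to impose the defining property in each case and read off the conditions on the coefficient function. For (i), I would write a general element as $z=\sum_{g,x}a(g,x)\,\b g x$ and demand $z\,\b h y=\b h y\,z$ for every basis element $\b h y$. Using $\b g x\,\b h y=\delta_{g^x,h}\,\b g {xy}$ and comparing the coefficient of a fixed $\b p q$ on both sides yields the single identity $a(p,qy^{-1})\,\delta_{p^{qy^{-1}},h}=\delta_{h,p}\,a(p^y,y^{-1}q)$, required for all $h,y,p,q$. Choosing $h\nE p$ forces $a(p,s)=0$ whenever $s$ fails to commute with $p$ (set $s=qy^{-1}$ and $h=p^{s}$), which is the first condition in \eqref{functions-a}; choosing $h=p$ then collapses the Kronecker deltas to $1$ and produces $a(p,s)=a(p^y,s^y)$, the conjugation invariance. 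The converse — that both conditions make $z$ central — follows by running the same coefficient comparison backwards, the two conditions being exactly what is needed to match the deltas.

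For (ii) the plan is entirely parallel but dualized: writing $\beta=\sum_{g,x}a(g,x)\,\d g x$ and imposing $\beta(\b h y\,\b{h'}{y'})=\beta(\b{h'}{y'}\,\b h y)$, the pairing picks out $\delta_{h^y,h'}\,a(h,yy')=\delta_{(h')^{y'},h}\,a(h',y'y)$. Specializing $h'=h^y$ and $y'=y\inv x$ condenses this to the single relation $a(h,x)=\delta_{h^x,h}\,a(h^y,x^y)$ valid for all $y$; taking $y=e$ gives the vanishing condition, and the remaining freedom in $y$ gives the conjugation invariance, so the very same function class \eqref{functions-a} reappears. I expect no real obstacle here beyond keeping the conjugation convention $g^x=x\inv gx$ straight in the index bookkeeping.

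For (iii) I would start from $\gamma=\sum_{g,x}c(g,x)\,\b g x$ and impose $\Delta(\gamma)=\gamma\oti\gamma$ together with $\eps(\gamma)=1$. Since $\Delta(\b g x)=\sum_{h}\b h x\oti\b{h\inv g}x$ only ever couples tensor factors with equal second index, the product $\gamma\oti\gamma$ must be supported on a single value of $x$, so $\gamma=\sum_g\xi(g)\,\b g x$ with $\xi(g):=c(g,x)$. Reindexing $\Delta(\gamma)=\sum_{p,q}\xi(pq)\,\b p x\oti\b q x$ and matching it against $\sum_{p,q}\xi(p)\xi(q)\,\b p x\oti\b q x$ forces $\xi(pq)=\xi(p)\xi(q)$, i.e. $\xi\iN H_1(G,\Bbbk^\times)=\Hom(G,\Bbbk^\times)$ (non-vanishing of $\xi$ pushes its values into $\Bbbk^\times$), while $\eps(\gamma)=\xi(e)=1$ is then automatic. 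This already yields the stated description of $\mathcal{G}(\DG)$.

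Finally, for the isomorphism $\mathcal{G}(\DG)\Cong G\Times H_1(G,\Bbbk^\times)$, I would send $\sum_g\xi(g)\,\b g x\mapsto(x,\xi)$ and compute the product of two group-likes: $\big(\sum_g\xi(g)\,\b g x\big)\big(\sum_h\eta(h)\,\b h y\big)=\sum_g\xi(g)\,\eta(g^x)\,\b g {xy}$. The decisive point — and the closest thing to an obstacle — is that $\eta$, being a character into an abelian group, is a class function, so $\eta(g^x)=\eta(g)$ and the conjugation twist disappears; the product is $\sum_g(\xi\eta)(g)\,\b g {xy}$, corresponding to $(xy,\xi\eta)$. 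A priori one might expect a semidirect product with $G$ acting on characters by conjugation, and the plan is precisely to observe that this action is trivial, which upgrades the extension to a genuine \emph{direct} product and makes the parametrizing map a group isomorphism.
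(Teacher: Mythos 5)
Your proposal is correct and follows essentially the same route as the paper's proof: direct coefficient comparison in the bases $\{\b g x\}$ and $\{\d g x\}$ for (i) and (ii), and for (iii) the observation that $\Delta(\b g x)$ preserves the second index forces group-likes to be supported on a single $x$ with multiplicative coefficients. Your explicit remark that $\eta(g^x)=\eta(g)$ (characters into $\Bbbk^\times$ are class functions) is exactly the point the paper compresses into ``one checks easily'' when verifying $\Gamma_{a,\xi}\,\Gamma_{a',\xi'}=\Gamma_{aa',\xi\xi'}$, so the direct-product structure is justified in the same way.
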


\begin{proof}
(i)\, Writing
  \be
  b = \sum_{g,x\in G} a(g,x) \, \b g x \,, \quad~
  \beta = \sum_{g,x\in G} \alpha(g,x)\, \d g x  \quad\text{ and }\quad
  \Gamma = \sum_{g,x\in G} \gamma(g,x)\, \b g x
  \ee
with arbitrary functions $a$, $\alpha$ and $\gamma$ from $G\Times G$ to $\Bbbk$, 
for $b\iN Z(\DG)$ we need $a({}^{x\!}(h^y),xy\inv) 
   $\linebreak[0]$
{=}\, 0 \eq a(h^y,y\inv x)$,
unless ${}^{x\!}(h^y) \eq h$ or, equivalently, $x \eq z y$ for some $z\iN Z_h$. 
This is, in turn, equivalent to requiring $a({}^z h,z) \eq 0$ unless $z\iN Z_h$,
i.e.\ to $a(g,x) \eq 0$ unless $[g,x] \eq e$. The remaining constraint is
equivalent to $a(g^z,x^z) \eq a(g,x)$ for all triples $g,x,z\iN G$.
\\[2pt]
(ii)\, From
  \be
  \bearl \dsty
  \beta(\b g x\, \b h y) = \del{g^x,h} \sum_{p,q\in G} \alpha(p,q) \, \d p q(\b g {xy})
  = \del{g^x,h}\, \alpha(g,xy) \qquand
  \Nxl2 \dsty
  \beta(\b h y\, \b g x) = \del{h^y,g} \sum_{p,q\in G} \alpha(p,q)\, \d p q(\b h {yx})
  = \del{h^y,g}\, \alpha(h,yx)
  \eear
  \ee
it follows that $\beta\iN C(\DG)$ iff
$ \del{g^x,h}\,\alpha(g,y) \eq \del{g^{y\inv x},h}\,\alpha(g^x,y^x) $ for all $g,h,x,y\iN G$.
This implies that $a(g,y) \eq 0$ unless $[g,y] \eq e$, and that 
$\alpha(g^x,y^x) \eq \alpha(g,y)$ for all $g,x,y\iN G$.
 \\[2pt]
(iii)\, For $\Gamma \iN \DG$ to be group-like we need $\Gamma\oti\Gamma \eq \Delta(\Gamma)
\,{\equiv}\, \sum_{p,g,x\in G} \gamma(pg,x)\, \b p x\oti \b g x$ as well as
$\Gamma\,{\neq}\, 0$. It follows that if $\gamma(g,x)\,{\neq}\, 0$ for some $g\iN G$, then 
$\gamma(g,x)\,\gamma(h,x) \eq \gamma(gh,x)$ for all $g,h\iN G$. Moreover, the
property $\eps(\Gamma) \eq 1$ of group-like elements translates to $\gamma(e,x) \eq 1$.
Together this means that $\gamma(\,\cdot\,,x)\iN H_1(G,\Bbbk^\times)$ 
(for those $x\iN G$ for which $\gamma(g,x)$ is non-zero).
Further, setting $\Gamma_{a,\xi} \eq \sum_{g\in G}\xi(g)\, \b g a$ and 
$\Gamma_{a',\xi'} \eq \sum_{g\in G}\xi'(g)\, \b g{a'}$, one checks that
$ \Gamma_{a,\xi}\,\Gamma_{a',\xi'} \eq \Gamma_{aa',\xi\xi'} $ and
$ (\Gamma_{a,\xi})\inv_{} =\Gamma_{a\inv,\xi\inv} $.
\end{proof}

\medskip

Next recall from \eqref{eq:muN} the map $\mu_1\colon C^2_G\To C_G$ 
from diconjugacy classes to conjugacy classes.
It follows immediately from Proposition \ref{prop:ZCG} that a basis for $Z(\DG)$ 
is given by $\{ v_d\}_{d\in \mu_1\inv(c_e)}$, and a basis for $C(\DG)$
by $\{\gamma_d\}_{d\in \mu_1\inv(c_e)}$, where
  \be
  v_d := \sum_{(g|x)\,\in\, d}\b g x \qquand
  \gamma_d := \sum_{(g|x)\,\in\, d} \d g x 
  \ee
for any diconjugacy class $d\iN C^2_G$.

Denote by $\tilde{\pi}\colon G\Times G\rightarrow G$ the 
projection to the first factor and define $\pi\colon C^{2}_G\To C_G$ by 
  \be
  \pi:\quad d\mapsto \tilde\pi(d) \,.
  \label{eq:leftproj}
  \ee

\begin{prop}\label{prop:Zbasis}
Let $d,d'\iN \mu_1\inv(c_e)$ and let $(g|x)\iN d$. Then
  \be
  v_d\, v_{d'} = \sum_{\{y\in G|(g|y)\in d'\}}\! v_{d_{(g|xy)}} \,.
  \label{eq:vdprod}
  \ee
In particular $v_d\cdot v_{d'} \eq 0$ unless $\pi(d) \eq \pi(d')$.
\end{prop}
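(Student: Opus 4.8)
The plan is to compute $v_d\,v_{d'}$ directly from the multiplication rule \eqref{DGmorphsinbas} and to identify the outcome inside the basis $\{v_{d''}\}_{d''\in\mu_1\inv(c_e)}$ of $Z(\DG)$ furnished by Proposition \ref{prop:ZCG}. Expanding both factors, \[ v_d\,v_{d'} = \sum_{(a|b)\iN d}\;\sum_{(c|e')\iN d'}\del{a^b,c}\,\b a{be'}. \] Since $d\iN\mu_1\inv(c_e)$, every pair $(a|b)\iN d$ is a commuting pair, so $a^b=a$ and the Kronecker delta collapses to $\del{a,c}$. Hence only summands with matching first entries survive, and \[ v_d\,v_{d'}=\sum_{(a|b)\iN d}\;\sum_{\{e'\mid(a|e')\iN d'\}}\b a{be'}. \]

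First I would dispose of the vanishing criterion. The first entry $a$ of any $(a|b)\iN d$ lies in the single conjugacy class $\pi(d)$, while the surviving terms have $c=a$ together with $(c|e')\iN d'$, forcing $a\iN\pi(d')$ as well. Thus a nonzero summand requires $\pi(d)\cap\pi(d')\neq\emptyset$; as $\pi(d)$ and $\pi(d')$ are conjugacy classes this means $\pi(d)=\pi(d')$. Consequently $v_d\,v_{d'}=0$ whenever $\pi(d)\neq\pi(d')$, which is the ``in particular'' assertion.

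For the main identity I would fix the representative $(g|x)\iN d$ and first record the one-sided computation $\b g x\,v_{d'}=\sum_{\{y\mid(g|y)\iN d'\}}\b g{xy}$, which follows from the same collapse of the delta; summing it over $(g|x)\iN d$ reproduces the double sum above. Since $v_d\,v_{d'}$ is a product of two central elements it again lies in $Z(\DG)$ and hence expands in the basis $\{v_{d''}\}$, so its coefficient on a given $v_{d''}$ equals the coefficient of any single basis vector $\b h w$ with $(h|w)\iN d''$. For commuting $(g|y)$ the pair $(g|xy)$ is again commuting, with diconjugacy class $d_{(g|xy)}$, and the plan is to show that the surviving vectors $\b g{xy}$ assemble, as $y$ runs over $\{y\mid(g|y)\iN d'\}$, into precisely the orbit sums $v_{d_{(g|xy)}}$, thereby yielding \eqref{eq:vdprod}.

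The main obstacle is exactly this last bookkeeping step: passing from the fixed-representative sum $\sum_y\b g{xy}$ to the orbit sums $\sum_y v_{d_{(g|xy)}}$ requires controlling how many times each diconjugacy class $d_{(g|xy)}$ is produced. I expect the decisive tool to be an orbit--stabilizer argument: simultaneous $G$-conjugation acts on the set of pairs $\big((a|b),(a|e')\big)$ with matching first entry, the assignment $\big((a|b),(a|e')\big)\mapsto(a|be')$ is equivariant, so the fibres over the points of a single diconjugacy class all have the same size, and reconciling that common fibre size with the number of admissible $y$ --- through the centralizer $Z_g$ and the stabilizers of $(g|x)$ and of the elements of $d'$ --- is the one place where genuine care with multiplicities is needed.
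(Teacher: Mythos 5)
Your expansion of $v_d\,v_{d'}$ and the collapse of the Kronecker delta via $a^b=a$ for commuting pairs is exactly the computation in the paper's proof, and your argument for the second assertion ($v_d\,v_{d'}=0$ unless $\pi(d)=\pi(d')$) is complete and correct. But for the main identity \eqref{eq:vdprod} you stop at announcing an orbit--stabilizer plan and explicitly defer the multiplicity bookkeeping; since that bookkeeping is the entire content of the identity (it is the step the paper compresses into ``it is quickly verified''), the proposal as written does not prove the Proposition.

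Moreover, the gap you flagged is not just real, it cannot be closed in the form stated: the surviving vectors do \emph{not} assemble into the orbit sums $v_{d_{(g|xy)}}$ with coefficient $1$. Carrying out the count you propose, the coefficient of $\b{g}{b}$ in $v_d\,v_{d'}$ is $\#A_b$ with $A_b=\{q : (g|q)\in d,\ (g|q^{-1}b)\in d'\}$, while its coefficient in $\sum_y v_{d_{(g|xy)}}$ is $\#B_b$ with $B_b=\{y : (g|y)\in d',\ d_{(g|xy)}=d_{(g|b)}\}$; the orbit--stabilizer argument you anticipate (a bijection $(y,v)\mapsto v$ between $\{(y,v): (g|y)\in d',\ v\in Z_g,\ (xy)^v=b\}$ and $\{u\in Z_g : (g|(x^u)^{-1}b)\in d'\}$) gives
\[
  \#A_b\cdot|Z_{(g|x)}| \;=\; \#B_b\cdot|Z_{(g|b)}| \,,
\]
so the correct identity carries weights,
\[
  v_d\, v_{d'} \;=\; \sum_{\{y\in G\,|\,(g|y)\in d'\}} \frac{|Z_{(g|xy)}|}{|Z_{(g|x)}|}\, v_{d_{(g|xy)}}
  \;=\; \sum_{\{y\in G\,|\,(g|y)\in d'\}} \frac{|d|}{|d_{(g|xy)}|}\, v_{d_{(g|xy)}} \,,
\]
and these weights are all $1$ only if every class $d_{(g|xy)}$ has the same size as $d$. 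A concrete counterexample to \eqref{eq:vdprod} as stated: take $G=D_4=\langle r,f\mid r^4=f^2=e,\ frf=r^{-1}\rangle$ and $(g|x)=(r^2|f)$, $d=d'=d_{(r^2|f)}=\{(r^2|f),(r^2|r^2f)\}$ (commuting pairs, since $r^2$ is central). Then
\[
  v_d^{\,2} \;=\; \big(\b{r^2}{f}+\b{r^2}{r^2f}\big)^2 \;=\; 2\,\b{r^2}{e}+2\,\b{r^2}{r^2}\,,
\]
whereas the right-hand side of \eqref{eq:vdprod} is $v_{d_{(r^2|e)}}+v_{d_{(r^2|r^2)}}=\b{r^2}{e}+\b{r^2}{r^2}$. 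Your equivariance observation does prove that $v_d\,v_{d'}$ is a non-negative integer combination of the $v_{d''}$ supported exactly on the classes $d_{(g|xy)}$, but the coefficients are the index ratios above, not $1$. So your caution about the ``one place where genuine care with multiplicities is needed'' was exactly on target: that is where the paper's ``quickly verified'' conceals an error, and it is also where your own proof would have had to stop.
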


\begin{proof}
Note first that since $\{v_d\}_{d\in \mu_1\inv(c_e)}$ forms a basis of $Z(\DG)$, 
the product $v_d\cdot v_{d'}$ is bound to be a sum of basis elements of the same type.
We have
  \be
  v_d\, v_{d'} \, = \! \sum_{\scriptstyle (p|q)\in d\atop \scriptstyle (r|s)\in d'}\! \b p q\, \b r s
  \, = \! \sum_{\scriptstyle (p|q)\in d\atop \scriptstyle (r|s)\in d'}\! \del{p,r}\, \b p {qs}
  \, = \!\!\! \sum_{\scriptstyle (p|q)\in d\atop \scriptstyle s\in G\text{ s.t.}\, (p|s)\in d'}\!\!\! \b p {qs} \,.
  \label{eq:vdprod2}
  \ee
It is quickly verified that the right hand sides of \eqref{eq:vdprod} and
\eqref{eq:vdprod2} coincide.
\end{proof}


\subsection{The category \texorpdfstring{\boldmath $\DG$}{DG}-mod for
           char\texorpdfstring{\boldmath $(\Bbbk)\,{\nmid}\, |G|$}{(k) not dividing |G|}}\label{ssec:DGmod}

For any group element $g\iN G$ we denote by $\DG^{(g)}$ the subspace of $\DG$ that is
spanned by $\{\b gx \,|\, x\iN Z_g\}$; this is a subalgebra of \DG\ isomorphic
to $\Bbbk[Z_g]$. By induction, any $\DG^{(g)}$-module $V$ defines a $\DG$-module $\widehat{V}$,
  \be
  \widehat{V} := \DG\otimes_{\DG^{(g)}} V.
  \label{eq:IndMod}
  \ee
It follows that $\widehat{V}$ has a vector space decomposition 
$\widehat{V} \eq \bigoplus_{h\in c_g}\! V_h$ in terms of elements of the conjugacy class 
$c_g$ of $g$, and $\dim_\Bbbk(\widehat{V}) \eq |c_g|\, \dim_\Bbbk(V)$.

Now assume that $\mathrm{char}(\Bbbk)$ does not divide $|G|$. The simple $\DG$-modules 
are then precisely the modules induced from simple $\DG^{(g)}$-modules. More precisely, 
choose a representative $\gc\iN c$ for each conjugacy class $c\iN C_G$. 
Isomorphism classes of simple $\DG$-modules are then in bijection with pairs 
$(c,\alpha)$, where $c\iN C_G$ and $\alpha$ is a simple $Z_{\gc}$-character. If $V$ is 
a $Z_{\gc}$-representation with character $\alpha$, the induced module $\widehat{V}$ is 
a simple $\DG$-module corresponding to the pair $(c,\alpha)$. We denote the set 
$\{ (c,\alpha) \}$ of labels of indecomposable \DG-modules by $I$ and choose for each 
such label a representative module $V_{(c,\alpha)}$. We denote the trivial 
one-dimensional \DG-module $V_{(e,\idscs)}$ by $V_0$.

\medskip

Taking a simple module to its dual amounts to an involution on the set $I$ of labels:
$V_{(c,\alpha)}^\vee \,{\cong}\, V_{\overline{(c,\alpha)}}$. To describe this 
involution concretely we impose a condition on the choice of representatives $\{\gc\}$
of conjugacy classes: For $g \iN c$ denote the conjugacy class containing $g\inv$ by 
$\overline{c}$; this defines an involution on the set $C_G$ of conjugacy classes. 
For $\overline{c}\nE c$ we choose the representatives in such a way that 
$g_{\overline{c}}^{} \eq \gc\inv$. If instead a conjugacy class $c$ contains both $\gc$ 
and $\gc\inv$, we choose a group element $\imath_c\iN G$ such that 
$\gc\inv \eq {}^{\imath_c\!}\gc^{}$. If $\overline{c} \nE c$, then we just set 
$\imath_c \,{:=}\, e$. Note that because of $Z_g \eq Z_{g\inv}$, conjugation by $\imath_c$ 
is an automorphism of $Z_{\gc}$. Now if $\alpha$ is a $Z_{\gc}$-character, we 
define $\alpha^{\imath_c}$ to be the character obtained from $\alpha$ by the 
automorphism given by $\imath_c$, i.e.\ $\alpha^{\imath_c}(x) \eq 
\alpha(\imath_c\inv x\,\imath_c^{})$. With these conventions we finally have
  \be
  \overline{(c,\alpha)} = (\overline{c},\overline{\alpha}^{\imath_c}) \,,
  \label{def-imath}
  \ee
with $\overline{\alpha}$ the character dual to $\alpha$, i.e. 
$\overline{\alpha}(x) \eq \alpha(x\inv)$.

\medskip

The character of a simple module corresponding to $(c,\alpha)$ acts as
  \be
  \chi_{(c,\alpha)}(\b g x) = \left\{ \begin{array}{ll}
  \alpha(x^z) & \text{if}~~ g \eq {}^{z\!}\gc\ \text{ and } x\iN Z_{\gc} \,, \nxl1
  0 &\text{otherwise\,.} \end{array} \right.
  \label{eq:DGchar}
  \ee
Equivalently,
  \be
  \chi_{(c,\alpha)} = \frac{1}{|Z_{\gc}|}\sum_{z\in G}\sum_{x\in Z_{\gc}}
  \alpha(x)\, \d {{}^{z\!} \gc} {{}^{z\!} x} \,.
  \label{eq:DGchar2}
  \ee
It follows from \eqref{eq:DGchar2} that the simple character 
$\chi_{(c,\alpha)}$ lies in the linear span of \\
$ \{ \gamma_d \,|\, d \,{\in}\, \mu_1\inv(c_e)\,{\cap}\, \pi\inv(c) \} \,{\subset}\, \DGs $,
with $\pi$ the map from $C^{2}_G$ to $C_G$ defined in \eqref{eq:leftproj}.

\medskip

Let us list some further facts about the category $\DG\Mod$. 
Since \DG\ is a fini\-te-di\-men\-sional ribbon Hopf algebra,
for any field $\Bbbk$, $\DG\Mod$ is a finite $\Bbbk$-linear ribbon category.
If moreover $\chark \eq 0$, then the Grothendieck ring $K_0(\DG\Mod)$ 
is isomorphic to $\bigoplus_{c\in C_G}\!Z\big(\Bbbk[Z_{\gc}]\big)$ \cite{with2}.

Assume now $\mathrm{char}(\Bbbk)\,{\nmid}\, |G|$. Then $\DG\Mod$ is semisimple 
and even modular. The rank of $\DG\Mod$ is $\sum_{c\in C_G} |C_{Z_{\gc}}|$ which, 
as is shown in Proposition \ref{prop:ordermu1inv}, coincides with $|\mu_1\inv(c_e)|$ 
and thus with  the dimension of $C(\DG)$ and $Z(\DG)$. The dimension $\mathrm{D}_{\DG\Mods}$ 
of $\DG\Mod$ is $|G|$. Moreover, since $\DG\Mod$ is a 
Drinfeld center, namely the center of the category of $G$-graded vector spaces
(in other words, its class in the Witt group of modular tensor categories is trivial),
the gluing anomaly of the Reshetikhin-Turaev TQFT constructed from $\DG\Mod$ 
vanishes, and hence all mapping class group representations 
are genuine linear representations.

All quantum dimensions in $\DG\Mod$ are integers:
  \be
  \mathrm{d}_{(c,\alpha)} = |c|\, \alpha(e) = \mathrm{dim}_\Bbbk(V_{(c,\alpha)}) \,.
  \label{qdims}
  \ee
The twist phases are given by
  \be
  \theta_{(c,\alpha)} = \frac{\alpha(\gc)}{\alpha(e)} \,.
  \label{eq:Vtwist}
  \ee
Since $\gc$ is central in $Z_{\gc}$, $\theta_{(c,\alpha)}$ is a root of unity. According
to \eqref{qdims} the invertible objects of $\DG\Mod$ correspond to labels $(c_a,\zeta)$
with $a\iN Z(G)$ and $\zeta$ a linear character of $G$. The character of an
invertible object takes the simplified form
  \be
  \chi_{(c_a,\zeta)} = \sum_{x\in G} \zeta(x)\, \d ax
  = \sum_{d \,\in\, \mu_1\inv(c_e)\cap\pi\inv(c_a)} \zeta(x_d)\, \gamma_d \,,
  \ee
with $x_d$ such that $(\gc|x_d)$ for some $\gc$ is a representative of
the diconjugacy class $d$.

\begin{rem}
The isomorphism classes of invertible objects form a subgroup of the 
Grothendieck ring $K_0(\DG\Mod)$,
called the the Picard group $\mathrm{Pic}(\DG\Mod)$. This group is given by
$ \mathrm{Pic}(\DG\Mod) \,{\cong}\, Z(G)\,{\times}\, H_1(G,\Bbbk^\times)
\,{\cong}\, Z\big(\mathcal{G}(\DG)\big) $.
\end{rem}


\subsection{The coend \texorpdfstring{\boldmath $\DGa$}{DGa}}

To any finite tensor category $\mathcal C$ there is associated a standard coend, namely
\eqref{Lcoend}, i.e.\ the coend $L$
of the functor $(U,V) \,{\mapsto}\, U^\vee \oti V$. In the context of representations
of mapping class groups, this coend has been studied in \cite{lyub6}; we refer to it
as the \emph{Lyubashenko coend}. In this appendix we describe this coend 
for the category $\DG\Mod$. As an object of $\DG\Mod$, the Lyubashenko coend is the 
left coadjoint module $\DGa$ \cite{lyub6,vire4}; in terms of the standard bases
of \DG\ and \DGs, the action of \DG\ on $\DGa$ is 
  \be
  \b g x \,.\, \d h y = \del{g^x,[h,y]}\, \d {{}_{}^{x\!}h}{{}^{x\!}y} \,.
  \label{eq:DGcoa}
  \ee
Note that the action \eqref{eq:DGcoa} preserves the subspaces of \DGa\ that are spanned by those
$\d g x$ for which $(g|x)\iN d$ for a fixed diconjugacy class $d$.

The Lyubashenko coend $L$ of a braided finite tensor category has a natural Hopf 
algebra structure \cite{lyub6}. If $\mathcal C$ is the representation category of a 
factorizable Hopf algebra, this structure is given by the formulas
\eqref{eq:Lapo}. Specializing to $H \eq \DG$ these give
  \be
  \bearll
  m\coa:      & \d g x \oti \d h y \mapsto \del{[x,g]x,y}\, \d {(g^x)hyx\inv}{x} \,, 
  \Nxl2
  \eta\coa:   & 1 \mapsto \sum_{x\in G}\d e x \,,             
  \Nxl2
  \Delta\coa: & \d gx \mapsto \d gy \oti \d{g^y}{y\inv x} \,, 
  \Nxl2
  \eps\coa:   & \d g x \mapsto \del{x,e} \,,                  
  \Nxl2
  \apo\coa:   & \d gx \mapsto \d {g\inv [x,g]}{(x\inv)^g} \,. 
  \label{eq:DGamult..}
  \eear
  \ee
The Hopf algebra \DGa\ in $\DG\Mod$ comes equipped with a two-sided integral 
$\Lambda\coa$ and a two-sided cointegral $\lambda\coa$ given by
  \be
  \Lambda\coa:\quad 1 \mapsto \sum_{g\in G} \d g e \qquad\text{and by}\qquad
  \lambda\coa:\quad \d g x \mapsto \del{g,e} \,,
  \label{eq:DGaintegrals}
  \ee
respectively. 
Note that $\lambda\coa\circ\Lambda\coa \eq |G| \eq \lambda\circ\Lambda$.
\DGs\ also comes with a Hopf pairing 
  \be
  \omega\coa:\quad \d gx \oti \d hy \,\longmapsto\, \del{g\inv[x,g],h}\, \del{(x\inv)^g,y} \,,
  \label{eq:DGaHopfpairing}
  \ee
obtained by specializing \eqref{Lhopa}. This Hopf pairing has the following properties:

\begin{prop}~\\[1pt]
{\rm (i)} 
For $d,d'\iN C^2_G$ we have
  \be
  \omega\coa(\gamma_d\oti\gamma_{d'}) = \del{d',\overline{d}}\, |d| \,,
  \label{hopa-gaga}
  \ee
where $\overline{d}$ denotes the diconjugacy class satisfying 
$(g\inv|x\inv) \iN \overline d$ if $(g|x)\iN d$.
\\[2pt]
{\rm (ii)} 
For each $d\iN C^2_G$ such that $\pi(d) \eq c$, let $(c,\alpha)$ label a simple $\DG$-module, 
and choose a representative $(\gc|x_d)\iN d$. If $\mathrm{char}(\Bbbk) \,{\nmid}\, |G|$, 
then the character of $(c,\alpha)$ can be expanded as
  \be
  \chi_{(c,\alpha)} = \sum_{d\in \mu_1\inv(c_e)\cap\pi\inv(c)} \alpha(x_d)\, \gamma_d \,.
  \label{chi=sumalphagamma}
  \ee
{\rm (iii)} 
Applied to the characters of simple \DG-modules labeled by $(c,\alpha)$ and 
$(c',\alpha')$, the Hopf pairing gives
  \be
  \omega\coa(\chi_{(c,\alpha)}\oti\chi_{(c',\alpha')})
  = \del{c',\overline{c}}\, \del{\alpha',\alpha}\, |G| \,.
  \label{eq:HopfpairingChar}
  \ee
\end{prop}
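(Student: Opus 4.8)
The plan is to prove (i) by substituting the explicit Hopf pairing \eqref{eq:DGaHopfpairing} into the definition of $\gamma_d$, then to read off (ii) from the character formula \eqref{eq:DGchar2}, and finally to obtain (iii) by feeding the expansion (ii) into the pairing (i) and invoking orthogonality of the irreducible characters of the centralizers $Z_{\gc}$.

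For (i), I would write $\gamma_d\eq\sum_{(g|x)\in d}\d gx$ and $\gamma_{d'}\eq\sum_{(h|y)\in d'}\d hy$ and substitute into \eqref{eq:DGaHopfpairing}. For each fixed $(g|x)\iN d$ the two Kronecker deltas single out the unique partner $(h|y)\eq\big(g\inv[x,g]\,\big|\,(x\inv)^g\big)$. The one computational identity to record is that this pair equals $\big((g\inv)^{xg}\,\big|\,(x\inv)^{xg}\big)$, which follows from $(xg)\inv g\inv(xg)\eq g\inv x\inv g\inv x g\eq g\inv[x,g]$ and $(xg)\inv x\inv(xg)\eq g\inv x\inv g\eq(x\inv)^g$. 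Hence the partner is the conjugate of $(g\inv|x\inv)$ by $xg$, so it lies in $\overline d$ and in no other diconjugacy class; summing over the $|d|$ elements of $d$ gives $\omega\coa(\gamma_d\oti\gamma_{d'})\eq\del{d',\overline d}\,|d|$.

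For (ii), since $\chi_{(c,\alpha)}\iN C(\DG)$ is a central form, Proposition \ref{prop:ZCG}(ii) already forces its coefficients in the basis $\{\d gx\}$ to be constant along diconjugacy classes and supported on commuting pairs, so it is automatically a combination of the $\gamma_d$ with $d\iN\mu_1\inv(c_e)$. It then remains to identify the coefficient of $\gamma_d$ with that of the representative term $\d{\gc}{x_d}$ in \eqref{eq:DGchar2}: imposing ${}^z\gc\eq\gc$ forces $z\iN Z_{\gc}$, whence $x\eq z\inv x_d z$ and $\alpha(x)\eq\alpha(x_d)$ because $\alpha$ is a class function on $Z_{\gc}$, and averaging over $z\iN Z_{\gc}$ reproduces exactly $\alpha(x_d)$. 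Only classes with $\pi(d)\eq c$ contribute, giving the stated expansion. Inserting the two expansions into $\omega\coa$ and applying (i), only the terms with $d'\eq\overline d$ survive; since $\pi(\overline d)\eq\overline c$ this yields the factor $\del{c',\overline c}$, and for $c'\eq\overline c$ the pairing collapses to $\sum_d\alpha(x_d)\,\alpha'(x_{\overline d})\,|d|$ over $d\iN\mu_1\inv(c_e)\cap\pi\inv(c)$. Writing $|d|\eq|c|\,\ell_d$ with $\ell_d$ the cardinality of the $Z_{\gc}$-conjugacy class of $x_d$, the sum over diconjugacy classes collapses to a sum over $Z_{\gc}$, and the character orthogonality relation $\sum_{x\in Z_{\gc}}\alpha(x)\,\beta(x\inv)\eq|Z_{\gc}|\,\del{\alpha,\beta}$ (valid since $\mathrm{char}(\ko)\,{\nmid}\,|G|$) finishes the count as $|c|\,|Z_{\gc}|\,\del{}\eq|G|\,\del{}$.

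The main obstacle is the precise identification of $x_{\overline d}$, i.e.\ the second entry of the chosen representative $(g_{c'}|x_{\overline d})$ of $\overline d$, and this is exactly where the duality convention \eqref{def-imath} must be handled with care. When $\overline c\nE c$ one has $g_{\overline c}\eq\gc\inv$, so the natural representative $(\gc\inv|x_d\inv)$ is already in standard form, $x_{\overline d}\eq x_d\inv$, and the orthogonality step delivers $\del{\alpha,\alpha'}$ directly. When $c$ is self-dual ($\overline c\eq c$, so $g_{c'}\eq\gc$), however, bringing $(\gc\inv|x_d\inv)$ back to first entry $\gc$ requires conjugation by $\imath_c\inv$, producing $x_{\overline d}\eq\imath_c\inv x_d\inv\imath_c$ and hence evaluating $\alpha'$ through the automorphism $\imath_c$ of $Z_{\gc}$. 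Tracking this twist and matching it against the labelling of dual objects in \eqref{def-imath} is the delicate point; once the representatives are accounted for consistently, the residual input is nothing deeper than orthogonality of characters on $Z_{\gc}$, so the whole computation is carried by the bookkeeping of conjugacy-class representatives rather than by any additional structural fact.
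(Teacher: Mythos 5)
Your parts (i) and (ii) are fine. For (i) you use exactly the paper's argument: the pairing \eqref{eq:DGaHopfpairing} matches $(g|x)$ with the single pair $\big(g\inv[x,g]\,\big|\,(x\inv)^g\big) \eq \big((g\inv)^{xg}\,\big|\,(x\inv)^{xg}\big)$, which is the conjugate of $(g\inv|x\inv)$ by $xg$ and hence lies in $\overline d$; summing over the $|d|$ elements gives \eqref{hopa-gaga}. For (ii) your route genuinely differs from the paper's: you read the coefficient of $\gamma_d$ directly off the character formula \eqref{eq:DGchar2}, after noting that a character, being a central form, must be a combination of the $\gamma_d$ with $d\iN\mu_1\inv(c_e)$. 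The paper instead extracts the coefficients through the pairing, $a(d)\eq|d|\inv\,\omega\coa\big(\chi_{(c,\alpha)}\oti\gamma_{\overline d}\big)$, which uses (i) and a longer computation. Your version is more elementary and equally valid; the hypothesis $\mathrm{char}(\ko)\nmid|G|$ enters only through the prefactor $1/|Z_{\gc}|$ in \eqref{eq:DGchar2}.

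In (iii), however, there is a genuine gap, and it sits exactly at the point you call ``the delicate point'' and then defer to bookkeeping. Your identification of the representative is correct: for self-dual $c$ one has $x_{\overline d}\eq(x_d\inv)^{\imath_c}$, not $x_d\inv$. But carrying this through orthogonality does not deliver $\del{\alpha',\alpha}$: since $\alpha'\big((w\inv)^{\imath_c}\big)\eq(\alpha')^{\imath_c}(w\inv)$, your sum collapses to
\be
  \frac{|G|}{|Z_{\gc}|}\sum_{w\in Z_{\gc}}\alpha(w)\,(\alpha')^{\imath_c}(w\inv)
  \;=\; |G|\;\del{\alpha,(\alpha')^{\imath_c}} \,,
\ee
and conjugation by $\imath_c$ is an automorphism of $Z_{\gc}$ that is in general not inner, hence permutes its irreducible characters nontrivially. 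So what your plan actually proves is \eqref{eq:HopfpairingChar} with $\del{\alpha',\alpha}$ replaced by $\del{\alpha',\alpha^{\imath_c}}$; the two agree when $\overline c\nE c$ (where $\imath_c\eq e$), but not in general.

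Moreover, this is not a defect you could have repaired by more careful bookkeeping: the stated formula itself fails. Take $G\eq Q_8\eq\langle x,y\rangle$, $c\eq c'\eq c_x\eq\{x,x\inv\}$, $\gc\eq x$, $Z_x\eq\langle x\rangle\,{\cong}\,\zet/4\zet$, $\imath_c\eq y$, and $\alpha\eq\alpha'$ the character with $\alpha(x)\eq\ii$. The classes $d_k\ni(x|x^k)$, $k\eq0,1,2,3$, exhaust $\mu_1\inv(c_e)\cap\pi\inv(c)$, each has $|d_k|\eq2$, and each is self-dual with $x_{\overline{d_k}}\eq x^k$ (conjugate $(x\inv|x^{-k})$ by $y$); in particular $(x|x\inv)\,{\notin}\,d_1$. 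Hence by (i) and (ii)
\be
  \omega\coa\big(\chi_{(c,\alpha)}\oti\chi_{(c,\alpha)}\big)
  \;=\; 2\sum_{k=0}^{3}\alpha(x^k)^2 \;=\; 2\,(1-1+1-1) \;=\; 0 \,,
\ee
whereas \eqref{eq:HopfpairingChar} claims the value $|G|\eq8$; instead one finds
$\omega\coa\big(\chi_{(c,\alpha)}\oti\chi_{(c,\alpha^{\imath_c})}\big)\eq8$, confirming the twisted formula. The paper's own proof elides this at the step ``we can choose $x_{\overline d}\eq x_d\inv$'', which is precisely what fails here, so your instinct identified a real flaw; but since your proposal endorses the untwisted delta as its conclusion, the argument as outlined cannot be completed. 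The correct general statement your computation leads to is
$\omega\coa\big(\chi_{(c,\alpha)}\oti\chi_{(c',\alpha')}\big)\eq\del{c',\overline{c}}\,\del{\alpha',\alpha^{\imath_c}}\,|G|$.
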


Note that $\omega\coa$ does not necessarily pair dual $\DG$-characters.

\begin{proof}
(i) By the elementary equalities $g\inv[x,g] \eq (g\inv)^{xg}$ and
$(x\inv)^g \eq (x\inv)^{xg}$ we have
  \be
  \omega\coa(\gamma_d\oti\gamma_{d'}) = \sum_{\scriptstyle (g|x)\in d \atop \scriptstyle (h|y)\in d'}
  \del{(g\inv)^{xg},h}\, \del{(x\inv)^{xg},y} \,, 
  \ee
which directly implies \eqref{hopa-gaga}.
\\[2pt]
(ii) If $|G|$ is non-zero in $\Bbbk$, then the cardinality of any diconjugacy 
class $d$ is non-zero as well, since $|d| \eq [G \,{:}\, Z_{(\gc|x_d)}] 
\eq [G \,{:}\, Z_{\gc}]\, [Z_{\gc} \,{:}\, Z_{(\gc|x_d)}]$.
Also note that $|\overline d| \eq |d|$. Characters 
can be expanded in the basis $\{\gamma_d\}_{d\in \mu_1\inv(c_e)}$
of $C(\DG)$, i.e.\ $\chi_{(c,\alpha)} \eq \sum_{d\in \mu_1\inv(c_e)} a(d)\, \gamma_d$
with suitable coefficients $a(d)$. According to \eqref{hopa-gaga} the latter are 
$ a(d) \eq |d|^{-1} \omega\coa(\chi_{(c,\alpha)}\oti \gamma_{\overline{d}}) $.
Applying the character formula \eqref{eq:DGchar} thus yields
  \be
  \begin{array}{r}
  a(d)  \dsty
  = \frac{1}{|d|\, |Z_{\gc}|} \sum_{\scriptstyle z\in G\atop \scriptstyle x\in Z_{\gc}} \! 
  \sum_{(h|y)\in d} \alpha(x)\, \omega\coa(\d {{}^{z\!}\gc}{{}^zx}\oti\d {h\inv}{y\inv})
  \qquad\quad
  \\[-7pt] \dsty
  = \frac{|G|}{|d|\, |Z_{\gc}|}\sum_{x\in Z_{\gc}} \alpha(x)\, \del{d,d_{(\gc,x)}} \,.
  \eear
  \ee
Now if $(\gc,x)\iN d$, then $(\gc|y)\iN d$ for every $y$ that is $Z_{\gc}$-conjugate 
to $x$, so that the sum on the right hand side, if non-zero, takes the value 
$\alpha(x_d)\, [Z_{\gc} \,{:}\, Z_{(\gc|x_d)}]$. Moreover, the sum is non-zero iff 
$\pi(d) \eq c$. We conclude that if $d\in \mu_1\inv(c_e) \,{\cap}\, \pi\inv(c)$, then
  \be
  a(d) = \frac {[G \,{:}\, Z_{\gc}]\, [Z_{\gc} \,{:}\, Z_{(\gc|x_d)}]} {|d|}\,
  \alpha(x_d) = \alpha(x_d) \,,
  \ee
while otherwise $a(d) \eq 0$. This proves (ii).
\\[2pt]
(iii) Combining \eqref{hopa-gaga} and \eqref{chi=sumalphagamma} gives
  \be
  \bearll
  \omega\coa(\chi_{(c,\alpha)}\oti\chi_{(c',\alpha')}) 
  & \displaystyle = \sum_{\scriptstyle d\in \mu_1\inv(c_e)\cap\pi\inv(c) \atop
  \scriptstyle d'\in \mu_1\inv(c_e)\cap\pi\inv(c')}
  \Nxl3
  & \displaystyle = \del{c',\overline c} \sum_{d\in \mu_1\inv(c_e)\cap\pi\inv(c)}
  \!\! \alpha(x_d)\, \alpha'(x_{\overline d})\, |d|\,.
  \label{eq:sumdcpi}
  \eear
  \ee  
We can choose $x_{\overline{d}} \eq x_d^{-1}$. Now note that the number of elements 
$(g|x)$ in $d$ with the first entry fixed is $[Z_{g} \,{:}\, Z_{(g|x)}]$. Using also
that $|d| \eq [G \,{:}\, Z_{\gc}]\, [Z_{\gc} \,{:}\,Z_{(\gc|x_d)}]$, the 
right hand side of \eqref{eq:sumdcpi} can be rewritten as
  \be
  \omega\coa(\chi_{(c,\alpha)}\oti\chi_{(c',\alpha')}) 
  = \del{c',\overline{c}}\, \frac{|G|}{|Z_{\gc}|}\sum_{d\in \mu_1\inv(c_e)\cap\pi\inv(c)} \!
  \frac{|Z_{\gc}|}{|Z_{(\gc|x_d)}|}\, \alpha(x_d)\, \alpha'(x_{d}^{-1}) \,.
  \ee
This equals $\del{c',\overline{c}}\,\del{\alpha,\alpha'}\, |G| $,
thus establishing \eqref{eq:HopfpairingChar}.
\end{proof}

\medskip

In the semisimple case, i.e.\ for $\mathrm{char}(\Bbbk)\,{\nmid}\, |G|$, the Lyubashenko coend 
is given by $\bigoplus_{i} X_i^\vee\oti X_i$, where the (finite) summation ranges over 
representatives of isomorphism classes of simple objects. Concretely,
$\DGs \,{\cong}\, \bigoplus_{(c,\alpha)} V_{\overline{(c,\alpha)}}\oti V_{(c,\alpha)}$ 
as a left $\DG$-module.


\section{Properties of \texorpdfstring{\boldmath $2\Ng$}{2\Ng}-conjugacy classes}\label{sec:2Nconj}

For $c \iN C_G$ we denote by $\overline{c}$ the conjugacy class containing $g\inv$ for 
$g\iN c$.  For $c_1,c_2\iN C_G$, the subset $c_1\,{\cdot}\, c_2 \,{:=}\,\{g_1g_2 \,|\, 
g_1\iN c_1,\,g_2\iN c_2\}$ is a disjoint union of conjugacy classes, in which a
conjugacy class $c \,{\subseteq}\, c_1\,{\cdot}\, c_2$ appears with a multiplicity  $\mathcal{K}_{c_1,c_2}^{\phantom{c_1c_2}c}$
given by the number of solutions to $g_1g_2 \eq \gc$ with $g_1\iN c_1$ and $g_2\iN c_2$, 
modulo conjugation. 

Recall from \eqref{com=cupc} that $C'_G \,{\subseteq}\, C_G$ is the set of conjugacy 
classes of commutators, so that $\com_G \eq \bigcup_{c\in C'_G}\! c$. We have
  
\begin{lemma}
{\rm (i)} 
A conjugacy class $c$ lies in $C'_G$ iff there exists a $c_1\iN C_G$ such that 
$c \,{\subseteq}\, \overline{c}_1\,{\cdot}\, c_1$, i.e.\ such that 
$\mathcal{K}_{\overline{c}_1,c_1}^{\phantom{\overline{c}_1c_1}c} \,{\ne}\, 0$.
\\[2pt]
{\rm (ii)} 
We have $C'_G \eq \im(\mu_1) \,{\subseteq}\, \im(\mu_2) \,{\subseteq}\,
\im(\mu_3) \,{\subseteq}\,\cdots$.
\\[2pt]
{\rm (iii)} 
If $G$ is perfect, then there exists an $N\iN \NN$ such that $\im(\mu_n) \eq C_G$ 
for $n \,{>}\,N$.
\end{lemma}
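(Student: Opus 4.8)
The plan is to handle the three parts in turn, the first two being short computations and the third carrying the real content. For (i) the key is the elementary identity $[g,x] \eq g\inv g^x$, which rewrites the group commutator as the product of an element and a conjugate of its inverse. If $g$ lies in the conjugacy class $c_1$, then $g\inv \iN \overline{c}_1$ and $g^x \iN c_1$, so every commutator lies in a product $\overline{c}_1 \cdot c_1$; taking $c_1$ to be the class of $g$ shows the class of $[g,x]$ is contained in $\overline{c}_1\cdot c_1$. Conversely, if $c \subseteq \overline{c}_1\cdot c_1$, I would pick $ab \iN c$ with $a \iN \overline{c}_1$ and $b \iN c_1$; writing $a \eq h\inv$ for some $h \iN c_1$ and $b \eq h^x$ (possible since $b$ is conjugate to $h$) gives $ab \eq h\inv h^x \eq [h,x] \iN \com_G$. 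Since $\com_G$ is a union of conjugacy classes, this places $c$ in $C'_G$. The statement about the multiplicity is then just the tautology that $c \subseteq \overline{c}_1\cdot c_1$ is equivalent to $\mathcal{K}_{\overline{c}_1,c_1}^{\phantom{\overline{c}_1c_1}c}\neq 0$.

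For (ii) the equality $C'_G \eq \im(\mu_1)$ has already been recorded in the main text. The ascending inclusions follow by padding: given a $2m$-tuple $(\mathbf h|\mathbf y)$ representing $d$ with $\tilde\mu_m(\mathbf h|\mathbf y)\iN c$, the $(2m{+}2)$-tuple obtained by appending the pair $(e,e)$ has multi-commutator $\tilde\mu_m(\mathbf h|\mathbf y)\cdot[e,e] \eq \tilde\mu_m(\mathbf h|\mathbf y)\iN c$, because $[e,e]\eq e$. Hence $c \iN \im(\mu_{m+1})$, giving $\im(\mu_m)\subseteq\im(\mu_{m+1})$ for every $m$.

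For (iii), the substantive part, I would argue via \emph{commutator width}. Let $G_k \subseteq G$ denote the set of elements expressible as a product of at most $k$ commutators, so that $G_1 \subseteq G_2 \subseteq \cdots$ directly from the definition. Since the inverse of a commutator is again a commutator and products of commutators multiply, $\bigcup_k G_k$ is exactly the subgroup $G'$ generated by $\com_G$. As $G$ is finite, this ascending chain of subsets stabilizes, so there is an $N$ with $G_N \eq G'$. If $G$ is perfect then $G' \eq G$, whence every element of $G$ is a product of at most $N$ commutators; padding a shorter product with copies of $[e,e]\eq e$ then writes it as a product of exactly $n$ commutators for every $n \geq N$. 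Therefore $\tilde\mu_n$ is surjective, and hence $\mu_n$ is surjective onto $C_G$, for all $n > N$.

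The only genuine obstacle is the finiteness of the commutator width in (iii); but for a finite group this is immediate from the stabilization of the chain $G_1\subseteq G_2\subseteq\cdots$, and does not require any deep uniform bound such as Ore's conjecture. The one point needing care is the passage from \emph{at most $N$ commutators} to \emph{exactly $n$ commutators for all $n>N$}, which is supplied precisely by the trivial-pair padding already used in (ii).
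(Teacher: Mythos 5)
Your proof is correct and takes essentially the same route as the paper: part (i) rests on the same identity $[g,x] \eq g\inv(g^x)$, and your padding-with-$(e|e)$ and commutator-width arguments are precisely the details behind what the paper dismisses as ``immediate'' for (ii) and (iii).
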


\begin{proof}
One has $c\iN C'_G$ iff $\gc$ is a commutator.
Since $[g,x] \eq g\inv (g^x)$, this proves (i).
The proofs of (ii) and (iii) are immediate.
\end{proof}

Enumeration of $2\Ng$-conjugacy classes is in general a difficult task.
But the following general formulas for the sizes and number of $2\Ng$-conjugacy classes
are easily obtained.

\begin{lemma}
{\rm (i)} 
If $(\bg|\bx)\iN d$, then the size of $d \iN C^{2\Ng}_G$ is
  \be
  |d| = \frac{|G|}{|Z_{(\bg|\bx)}|} \,.
  \ee
{\rm (ii)} 
For any $\Ng\iN \mathbb{N}$ we have
  \be
  |C^{2\Ng}_G| = \frac{1}{|G|}\sum_{(\bg|\bx)\in G^{\times 2\Ng}}|Z_{(\bg|\bx)}| \,.
  \ee
{\rm (iii)} 
Moreover,
  \be
  |C^2_G| = \sum_{c\in C_G}\frac{1}{|Z_{\gc}|}\sum_{x\in G}|Z_{(\gc|x)}| \,.
  \label{eq:NumberTBlocks}
  \ee
\end{lemma}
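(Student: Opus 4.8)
**The plan is to prove the three formulas by standard orbit-counting (Burnside/orbit-stabilizer) arguments, handling (i) first, then deducing (ii), and finally specializing to obtain (iii).**

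First I would establish (i) directly from the orbit-stabilizer theorem. The set $C^{2N}_G$ of $2N$-conjugacy classes is precisely the set of orbits of the conjugation action \eqref{g-action-2N} of $G$ on $G^{\times 2N}$. For a fixed representative $(\bg|\bx)$ of an orbit $d$, the stabilizer of $(\bg|\bx)$ under this action is exactly the simultaneous centralizer $Z_{(\bg|\bx)} = \{z \iN G \,|\, (\bg^z|\bx^z) \eq (\bg|\bx)\}$. The orbit-stabilizer theorem then gives $|d| \eq [G \,{:}\, Z_{(\bg|\bx)}] \eq |G|/|Z_{(\bg|\bx)}|$, which is the claim. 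The only thing to check is that the stabilizer is well-defined up to conjugacy as $(\bg|\bx)$ ranges over $d$, but since the orbit size $|d|$ is intrinsic to $d$, the quantity $|Z_{(\bg|\bx)}|$ is independent of the chosen representative.

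Next I would derive (ii) by summing the reciprocals of the orbit sizes. Since $C^{2N}_G$ is the set of orbits, partitioning $G^{\times 2N}$ into orbits gives
  \be
  |C^{2N}_G| = \sum_{d\in C^{2N}_G} 1
  = \sum_{d\in C^{2N}_G} \frac{1}{|d|}\sum_{(\bg|\bx)\in d} 1
  = \sum_{(\bg|\bx)\in G^{\times 2N}} \frac{1}{|d_{(\bg|\bx)}|} \,,
  \ee
and substituting $|d_{(\bg|\bx)}| \eq |G|/|Z_{(\bg|\bx)}|$ from part (i) yields
  \be
  |C^{2N}_G| = \frac{1}{|G|}\sum_{(\bg|\bx)\in G^{\times 2N}} |Z_{(\bg|\bx)}| \,,
  \ee
which is exactly (ii). (This is the usual Burnside/Cauchy-Frobenius count, reorganized so that each point contributes the reciprocal of its orbit length.)

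Finally, (iii) is the specialization of (ii) to $N \eq 1$, rewritten by fibering $G^{\times 2}$ over its first entry. For $N\eq 1$ a point is a pair $(g|x)$, and I would organize the double sum over $G \Times G$ by first grouping the first entry $g$ into its conjugacy class $c\iN C_G$. For a fixed conjugacy class $c$ with chosen representative $\gc$, the elements $g\iN c$ are in bijection with cosets $G/Z_{\gc}$, and conjugation carries $(\gc|x)$ to $({}^zg_c|{}^zx)$ with the same centralizer size $|Z_{({}^z\gc|{}^zx)}| \eq |Z_{(\gc|x)}|$. Summing over the $|c| \eq [G\,{:}\,Z_{\gc}] \eq |G|/|Z_{\gc}|$ elements of $c$ thus replaces the sum over $g\iN c$ by a factor $|G|/|Z_{\gc}|$ times the sum over $x\iN G$ of $|Z_{(\gc|x)}|$; the overall prefactor $1/|G|$ from (ii) then cancels the $|G|$, leaving $\frac{1}{|Z_{\gc}|}\sum_{x}|Z_{(\gc|x)}|$, and summing over all $c\iN C_G$ gives \eqref{eq:NumberTBlocks}. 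The one point requiring a line of care is this invariance of the centralizer size along a conjugacy orbit of the first entry, but it is immediate from the fact that conjugation is an automorphism: $Z_{({}^z\gc|{}^zx)} \eq z\, Z_{(\gc|x)}\, z\inv$. I expect no genuine obstacle here; the entire statement is a packaging of orbit-counting, and the main (very mild) subtlety is simply keeping the bookkeeping of the two reorganizations of the sum consistent.
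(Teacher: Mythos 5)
Your proposal is correct and matches the paper's proof in substance: the paper disposes of (i) by the orbit-stabilizer theorem and of (ii) and (iii) by Burnside's lemma, and your derivations of (ii) (summing reciprocal orbit lengths) and of (iii) (regrouping the sum over first entries by conjugacy class) are just the same counting with the order of summation reorganized. No gaps; your explicit verification that $|Z_{({}^z\gc|{}^zx)}| \eq |Z_{(\gc|x)}|$ is the only point the paper leaves implicit.
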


\begin{proof}
(i) is an instance of the orbit-stabilizer theorem, (ii) is the corresponding
instance of Burnside's lemma, and (iii) also follows immediately from Burnside's lemma.
\end{proof}

A diconjugacy class $d$ is a subset of the Cartesian product of two conjugacy classes, 
$d\subseteq c_1\Times c_2$, and there is a subset $D_{c_1,c_2} \,{\subset}\, C^2_G$ 
such that $c_1\Times c_2 \eq \bigcup_{d\in D_{c_1,c_2}}\! d$.

\begin{prop}
For any $c,c'\iN C_G$ the number of diconjugacy classes in $c\Times c'$ is
  \be
  |D_{c,c'}| = \frac{1}{|Z_{\gc}|\, |Z_{g_{c'}}|}\,
  \sum_{x\in G} \big| Z_{\gc}\,{\cap}\, {}_{}^{x\!}Z_{g_{c'}} \big|
  = |Z_{\gc} \!\backslash G/ Z_{g_{c'}}| \,.
  \label{propB3}
  \ee
\end{prop}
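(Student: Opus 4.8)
The plan is to recognise the diconjugacy classes inside $c\Times c'$ as the orbits of a group action and then to count those orbits in two ways. By definition a diconjugacy class is an orbit of the conjugation action of $G$ on $G\Times G$, so $D_{c,c'}$ is exactly the set of orbits of the diagonal conjugation action on $c\Times c'$; it is convenient to realise this as the left action $x\cdot(g|h)\eq({}^xg|{}^xh)$, which has the same orbits as the right conjugation used in \eqref{g-action-2N}. Since $G$ acts transitively on a conjugacy class, with the stabiliser of the chosen representative $\gc$ equal to its centraliser $Z_{\gc}$, I would identify $c\,{\cong}\,G/Z_{\gc}$ and $c'\,{\cong}\,G/Z_{g_{c'}}$ as $G$-sets. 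Writing $H\,{:=}\,Z_{\gc}$ and $K\,{:=}\,Z_{g_{c'}}$, the task becomes that of counting the orbits of the diagonal $G$-action on $G/H\Times G/K$.

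For the second equality I would use the classical bijection between diagonal orbits on a product of homogeneous spaces and double cosets. Given a pair $(g_1H|g_2K)$, transitivity lets me move the first entry to the base coset $H$; the residual freedom is precisely the point stabiliser $H$, acting on the second factor $G/K$, and its orbits there are the double cosets $HgK$. This produces a bijection $D_{c,c'}\,{\cong}\,H\backslash G/K$, and hence $|D_{c,c'}|\eq|Z_{\gc}\backslash G/Z_{g_{c'}}|$.

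For the first equality I would count these same double cosets by Burnside's lemma applied to the action of $H$ on $G/K$ by left multiplication, whose orbits are the double cosets $HgK$. The set of cosets fixed by $h\iN H$ is $\{gK \,|\, g\inv hg\iN K\}$, and its cardinality is $\frac1{|K|}\,|\{g\iN G \,|\, g\inv hg\iN K\}|$ because the condition $g\inv hg\iN K$ is invariant under right translation $g\,{\mapsto}\,gk$. Summing over $h$ therefore gives
\be
|H\backslash G/K| = \frac{1}{|H|\,|K|}\sum_{h\in H}\big|\{g\in G \,|\, g\inv hg\in K\}\big| .
\ee
The last step is a Fubini-type interchange: the right-hand side counts pairs $(g,h)$ with $h\iN H$ and $g\inv hg\iN K$, and fixing $g$ first turns the inner count into $|H\,{\cap}\,{}^gK|$, so that $\sum_{h\in H}|\{g \,|\, g\inv hg\in K\}|\eq\sum_{x\in G}|H\,{\cap}\,{}^xK|$. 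Substituting back $H\eq Z_{\gc}$ and $K\eq Z_{g_{c'}}$ yields \eqref{propB3}.

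Everything here is standard orbit counting, so I do not anticipate a genuine obstacle; the only points that demand care are keeping the left/right conjugation conventions of Table~\ref{table} consistent and checking the two reindexings -- the passage from elements to $K$-cosets, and the interchange of the order of summation -- which together are where an index slip would most easily hide.
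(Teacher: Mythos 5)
Your proof is correct, and it rests on the same two ingredients as the paper's --- Burnside's lemma and the Fubini interchange turning $\sum_{h\in H}|\{g : g^{-1}hg\in K\}|$ into $\sum_{x\in G}|H\cap{}^{x}K|$ --- but it is organized differently. The paper derives the two equalities by two \emph{independent} Burnside counts: one for the action $(h,k)\colon x\mapsto hxk$ of $Z_{g_c}\times Z_{g_{c'}}$ on $G$, whose orbits are the double cosets (this gives the second equality), and one for the conjugation action of $Z_{g_c}$ on the set of representatives $(g_c|{}^{x}g_{c'})$, with the same $1/|Z_{g_{c'}}|$ overcounting correction you make (this gives the first). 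You instead prove the outer equality $|D_{c,c'}| = |Z_{g_c}\backslash G/Z_{g_{c'}}|$ structurally, via the classical bijection between diagonal $G$-orbits on $G/H\times G/K$ and double cosets, and then need only a single Burnside computation, for $H$ acting on $G/K$ by left multiplication. Note that this last $H$-set is isomorphic to the paper's set of representatives (send $gK$ to $(g_c|{}^{g}g_{c'})$, intertwining left multiplication with conjugation), so your one Burnside count is the paper's second one in coset language. The net effect: your route exhibits the agreement of $|D_{c,c'}|$ with the double-coset count by an explicit bijection rather than by matching two numerical formulas, at the modest cost of setting up and checking the homogeneous-space identification; the paper's route instead yields the counting formula $|H\backslash G/K| = \frac{1}{|H|\,|K|}\sum_{x\in G}|H\cap{}^{x}K|$ as a self-contained statement about arbitrary subgroups $H,K\le G$, independent of any interpretation in terms of conjugacy classes.
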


\begin{proof}
For any pair of subgroups $H,K \,{\le}\, G$ the cardinality $|H\backslash G/K|$ of the set
of double cosets is given by Burnside's lemma:
  \be
  |H\backslash G/K| = \frac{1}{|H|\,|K|} \sum_{(h,k)\in H\times K} | G^{(h,k)}| 
  = \frac{1}{|H|\,|K|} \sum_{x\in G}|H\cap {}_{}^{x\!} K| \,.
  \label{B5+B6}
  \ee
Here $G^{(h,k)}$ is the set of elements $x\iN G$ satisfying $hxk \eq x$, i.e.\ ${}^x k\eq h^{-1}$.
The second equality in \eqref{B5+B6} holds because for ${}^x k\eq h^{-1}$
one needs $h\iN H \,{\cap}\, {}^xK$.
We have thus established the second equality in \eqref{propB3}.
\\[2pt]
Now note that every $d \,{\subseteq}\, c\times c'$ has a representative of the form 
$(\gc|{}^xg_{c'})$ for some $x\iN G$, and that the cardinality $|D_{c,c'}|$ coincides 
with the number of $Z_{\gc}$-orbits on the set of such representatives. 
Further, for $z\iN Z_{\gc}$ we have $(\gc|{}^{zx}g_{c'}) \eq (\gc|{}^xg_{c'})$
iff $z\iN Z_{\gc} \,{\cap}\, {}_{}^{x\!}Z_{g_{c'}}$. Again using Burnside's lemma,
the number of orbits then becomes
  \be
  \frac{1}{|Z_{\gc}|\,|Z_{g_{c'}}|}\, \sum_{x\in G} |Z_{\gc}\,{\cap}\, {}_{}^{x\!}Z_{\gc'}| \,,
  \ee
where the prefactor $1/|Z_{g_{c'}}|$ rectifies overcounting. This completes the proof.
\end{proof}

We are particularly interested in the number $|\mu_1\inv(c)|$ of diconjugacy classes 
which are mapped by $\mu_1$ to a fixed conjugacy class $c\iN C'_G$. Assume that 
$c' \,{\subseteq}\, \overline{c} \,{\cdot}\, c$; then there exists an $x\iN G$ such that 
$\gc\inv (\gc^x)\iN c'$. More generally, if 
$\mathcal{K}_{\overline{c},c}^{\phantom{\overline{c},c}c'} \, {\geq} \,1$, then we can, and will, 
choose elements $x_i$, $i \eq 1,2,...\,, \mathcal{K}_{\overline{c},c}^{\phantom{\overline{c}c}c'}$, 
representing the $\mathcal{K}_{\overline{c},c}^{\phantom{\overline{c}c}c'}$ inequivalent 
solutions, i.e.\ we have $\gc\inv (\gc^{x_i})\iN c'$ for every $i$, and there does 
not exist any $z\iN Z_{\gc}$ such that $\gc^{x_iz} \eq \gc^{x_j}$ for $i \,{\neq}\, j$. 
The representatives $x_i$ are of course not unique -- for every pair $z_1,z_2\iN Z_{\gc}$, 
$z_1 x_i z_2$ represents the same solution. Moreover, two elements $x,x'\iN G$ 
satisfying $\gc\inv(\gc^x)\iN c'$ and $\gc\inv(\gc^{x'})\iN c'$ represent the same 
solution iff there exist $z_1,z_2\iN Z_{\gc}$ such that $x' \eq z_1xz_2$.

\begin{lemma}
Let $c\iN C_G$ and $c'\iN C_G'$ be such that $c' \,{\subseteq}\, \overline{c}\,{\cdot}\, c$. 
The number of diconjugacy classes in $c\Times G$ that are mapped to $c'$ by $\mu_1$ is
  \be
  \sum_{i=1}^{\mathcal{K}_{\overline{c},c}^{\phantom{\overline{c}c}c'}} \zeta_{c,c',i}
  \qquad{\rm with}\quad~
  \zeta_{c,c',i} := \frac{1}{|Z_{(\gc|x_i)}|}
  \sum_{y\in Z_{(\gc^{}|x_i)}}\! |Z_{\gc} \,{\cap}\, Z_{y}| \,.
  \label{defzeta}
  \ee
\end{lemma}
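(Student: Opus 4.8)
The plan is to fix once and for all the chosen representative $\gc$ of the conjugacy class $c$ and to convert the statement into a count of orbits of the centralizer $Z_\gc$ acting by conjugation. Any diconjugacy class $d$ with $\pi(d)=c$ has a representative of the form $(\gc|x)$, and two such representatives $(\gc|x),(\gc|x')$ represent the same class iff $x'=x^z$ for some $z\iN Z_\gc$ (a group element conjugating $\gc$ to itself lies in $Z_\gc$). Hence the diconjugacy classes contained in $c\Times G$ correspond bijectively to the orbits of $Z_\gc$ acting on $G$ by conjugation, and under this dictionary $\mu_1(d)$ is the conjugacy class of $[\gc,x]=\gc\inv\gc^x$. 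The condition $\mu_1(d)=c'$ therefore cuts out $S:=\{x\iN G\mid \gc\inv\gc^x\iN c'\}$; since conjugating $x$ by $z\iN Z_\gc$ conjugates $\gc\inv\gc^x$ by $z$ (the factor $\gc\inv$ being fixed), $S$ is a union of $Z_\gc$-orbits, and the number we want is the number of these orbits.

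Next I would organize $S$ by double cosets. The computation $\gc\inv\gc^{z_1xz_2}=(\gc\inv\gc^x)^{z_2}$ shows that membership in $S$ depends only on the double coset $Z_\gc x Z_\gc$, so $S$ is a disjoint union of double cosets; by the parametrization of solutions fixed immediately before the lemma these are exactly $D_i:=Z_\gc x_i Z_\gc$ for $i=1,\dots,\mathcal{K}_{\overline{c},c}^{\phantom{\overline{c}c}c'}$. Thus the count splits as $\sum_i N_i$, where $N_i$ is the number of $Z_\gc$-orbits on $D_i$, and it remains to prove the per-coset equality $N_i=\zeta_{c,c',i}$.

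For this I would exhibit both quantities as orbit counts. The elementary identity $\sum_x|\mathrm{Stab}(x)|=(\#\text{orbits})\,|Z_\gc|$ gives $N_i=\frac{1}{|Z_\gc|}\sum_{x\iN D_i}|Z_\gc\cap Z_x|$. On the other side, $Z_\gc\cap Z_y$ is exactly the fixed-point set of $y$ acting by conjugation on $Z_\gc$, so $\zeta_{c,c',i}=\frac{1}{|Z_{(\gc|x_i)}|}\sum_{y\iN Z_{(\gc|x_i)}}|Z_\gc\cap Z_y|$ is, by Burnside, the number of orbits of $Z_{(\gc|x_i)}=Z_\gc\cap Z_{x_i}$ acting by conjugation on $Z_\gc$. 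To identify the two I would use the $Z_\gc$-equivariant projection $q:D_i\to c$, $x\mapsto\gc^x$, whose image is the single $Z_\gc$-conjugation orbit of $\gc^{x_i}$ and whose fibres are cosets of $Z_\gc$; the orbit lemma for an equivariant surjection onto a transitive base reduces $N_i$ to the number of orbits of a point-stabilizer on one fibre, which I would transport to $Z_{(\gc|x_i)}$-conjugation orbits on $Z_\gc$ through the candidate map $w\mapsto[(\gc|x_i w)]$ (using the slice $x_iZ_\gc$, which every $Z_\gc$-orbit in $D_i$ meets).

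The hard part is precisely this final matching. The map $w\mapsto[(\gc|x_i w)]$ from $Z_{(\gc|x_i)}$-conjugation orbits on $Z_\gc$ to $Z_\gc$-orbits on $D_i$ is easily seen to be well defined and surjective, but injectivity is delicate: an equivalence $(\gc|x_i w)\sim(\gc|x_i w')$ yields only $w'=u\,w\,(x_i u x_i\inv)\inv$ for $u$ in the a priori larger group $Z_\gc\cap Z_{\gc^{x_i}}$, and one must show that this twisted relation collapses to ordinary $Z_{(\gc|x_i)}$-conjugation. I expect to settle it by proving the clean Burnside identity $\frac{1}{|Z_\gc|}\sum_{x\iN D_i}|Z_\gc\cap Z_x|=\frac{1}{|Z_{(\gc|x_i)}|}\sum_{y\iN Z_{(\gc|x_i)}}|Z_\gc\cap Z_y|$ directly, via a double count of the commuting configurations $\{(v,x)\mid v\iN Z_\gc,\ x\iN D_i,\ vx=xv\}$ reorganized over the fibres of $q$. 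That is the one place where the exact interplay between the double coset $D_i$ and the pair-centralizer $Z_{(\gc|x_i)}$ has to be pinned down, and where a naive bijection does not suffice.
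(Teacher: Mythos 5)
Your reduction is sound, and it is the same one the paper uses: diconjugacy classes in $c\Times G$ mapping to $c'$ correspond to $Z_{\gc}$-conjugation orbits on the solution set, that set decomposes into the double cosets $D_i \eq Z_{\gc}x_iZ_{\gc}$, every orbit meets the slice $x_iZ_{\gc}$, and $\zeta_{c,c',i}$ is, by Burnside, the number of $Z_{(\gc|x_i)}$-conjugation orbits on $Z_{\gc}$. All of that is correct. But your proposal stops exactly where the work starts: the per-coset equality $N_i \eq \zeta_{c,c',i}$ is never proved; you only announce that you ``expect to settle it'' by a double count of commuting configurations. As submitted, that is a genuine gap, not a proof.

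Moreover, this gap cannot be filled, because the identity you plan to prove -- and with it the lemma itself -- is false. Take $G \eq S_3$, $\gc \eq (123)$, and $c \eq c' \eq$ the class of $3$-cycles, so $Z_{\gc} \eq A_3$. There is a single solution class, represented by $x_1 \eq (12)$ (indeed $\gc\inv\gc^{x_1} \eq (132)(132) \eq (123) \iN c'$), and $D_1 \eq A_3(12)A_3$ is the set of all three transpositions. Since $A_3$ acts transitively on transpositions by conjugation, $N_1 \eq 1$: the unique diconjugacy class in $c\Times G$ mapping to $c'$ is that of $\big((123)|(12)\big)$. On the other hand $Z_{(\gc|x_1)} \eq Z_{(123)}\,{\cap}\,Z_{(12)} \eq \{e\}$, so $\zeta_{c,c',1} \eq |Z_{\gc}\,{\cap}\,Z_e| \eq 3$. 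Your diagnosis of where the danger lies is exactly right: on the slice $x_1Z_{\gc}$ the true identification is the twisted relation $w' \eq (u^{x_1})\inv w\, u$ with $u$ ranging over $Z_{\gc}\,{\cap}\,Z_{{}^{x_1}\gc}$, which in this example is all of $A_3$; since $u^{x_1} \eq u\inv$ there, the relation reads $w' \eq u w u \eq w u^2$ and is transitive on $A_3$, while ordinary $Z_{(\gc|x_1)}$-conjugation is trivial. So the twisted relation genuinely does not collapse, and no counting argument can equate the two orbit numbers. For comparison: the paper's own proof consists precisely of the unjustified jump you were suspicious of -- from ``conjugating by $y\iN Z_{(\gc|x_i)}$ gives $(\gc|x_i z^y)$'' it concludes that the fibres of $w\,{\mapsto}\,[(\gc|x_iw)]$ \emph{are} the $Z_{(\gc|x_i)}$-orbits, which is only well-definedness, not injectivity -- so the published argument fails at the same point, and Corollary \ref{cor:nodiconj} fails with it. What survives is the specialization to $c' \eq c_e$ (Proposition \ref{prop:ordermu1inv}, i.e.\ \eqref{eq:Number0ptTBlocks}), where $x_1 \eq e$ and the twisted relation is ordinary conjugation of $Z_{\gc}$ on itself. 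In general the correct per-coset count is only your $N_i \eq \frac{1}{|Z_{\gc}|}\sum_{w\iN D_i}|Z_{\gc}\,{\cap}\,Z_w|$, which does not simplify to $\zeta_{c,c',i}$.
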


\begin{proof}
Fix a value $i\iN \{ 1,2,...\,, \mathcal{K}_{\overline{c},c}^{\phantom{\overline{c}c}c'} \}$. 
All pairs $(\gc|z_1 x_i z_2)$ with $z_1,z_2\iN Z_{\gc}$ represent the same solution
$x$ of the condition $\gc\inv (g^x)\iN c'$. Due to the identity
$((\gc)^{z_1\inv}|(z_1x_iz_2)^{z_1\inv}) $\linebreak[0]$ {=}\,
(\gc|x_iz_2z_1\inv)$ it is enough to consider pairs of the type $(\gc|x_iz)$ for 
$z\iN Z_{\gc}$. Conjugating this pair by $y\iN Z_{(\gc|x_i)}$ gives $(\gc|x_i\,z^y)$, 
so the number of diconjugacy classes representing the solution $\gc\inv (g^{x_i})$ is 
given by the number of $Z_{(\gc|x_i)}$-or\-bits in $Z_{\gc}$, where the subgroup acts by 
conjugation. By Burnside's lemma the number of orbits is precisely
$\zeta_{c,c',i}$ as defined in \eqref{defzeta}.
\end{proof}

\begin{cor}\label{cor:nodiconj}
The number of diconjugacy classes of $G$ that are mapped by $\mu_1$ to $c'\iN C_G'$ is
  \be
  |\mu_1\inv(c')| = \sum_{c\in C_G}\sum_{i=1}^{\mathcal{K}_{\overline{c},c}^{\phantom{\overline{c}c}c'}}
  \frac{1}{|Z_{(\gc|x_i)}|} \sum_{y\in Z_{(\gc|x_i)}}\! |Z_{\gc} \,{\cap}\, Z_{y}| \,.
  \label{eq:nodiconj}
  \ee
\end{cor}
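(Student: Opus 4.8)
The plan is to obtain this formula simply by summing the preceding Lemma over all conjugacy classes $c\iN C_G$. The key observation is that the projection $\pi\colon C^2_G\To C_G$ of \eqref{eq:leftproj} partitions the diconjugacy classes according to the conjugacy class of their first entry, so that $\mu_1\inv(c')$ decomposes as the disjoint union of its intersections with the fibers $\pi\inv(c)$.

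First I would record the decomposition
\[
\mu_1\inv(c') = \bigsqcup_{c\in C_G}\big( \mu_1\inv(c')\cap \pi\inv(c)\big) \,,
\]
which is immediate since every diconjugacy class $d$ has a well-defined first-component conjugacy class $\pi(d)$. By construction the set $\mu_1\inv(c')\cap\pi\inv(c)$ is precisely the collection of those diconjugacy classes contained in $c\Times G$ that are mapped to $c'$ by $\mu_1$; hence the preceding Lemma applies and evaluates its cardinality to $\sum_{i=1}^{\mathcal{K}_{\overline{c},c}^{\phantom{\overline{c}c}c'}}\zeta_{c,c',i}$. Substituting the explicit value of $\zeta_{c,c',i}$ from \eqref{defzeta} and summing over $c\iN C_G$ then produces exactly the claimed expression \eqref{eq:nodiconj}.

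Next I would dispose of the conjugacy classes $c$ for which $c' \,{\not\subseteq}\, \overline{c}\,{\cdot}\, c$, to which the Lemma as stated does not literally apply: in this case the corresponding fiber is empty, since for $(g|x)$ with $g\iN c$ one has $[g,x] \eq g\inv(g^x)\iN \overline{c}\,{\cdot}\, c$, so $\mu_1\inv(c')\cap\pi\inv(c) \eq \emptyset$. This is consistent with the convention $\mathcal{K}_{\overline{c},c}^{\phantom{\overline{c}c}c'}\eq 0$ in that situation, which makes the inner sum empty and hence its contribution vanish, so that such classes may be harmlessly included in the total sum over $C_G$.

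The argument is essentially a bookkeeping step, and I do not expect a genuine technical obstacle: all the quantitative work has already been carried out in the Lemma via Burnside's lemma. The only points requiring care are verifying that the fiber decomposition over $\pi$ is indeed a disjoint partition and that the vanishing of a fiber matches the vanishing of $\mathcal{K}_{\overline{c},c}^{\phantom{\overline{c}c}c'}$, so that no term is miscounted when passing from the single-class count of the Lemma to the global count over all $c\iN C_G$.
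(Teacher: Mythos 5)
Your proposal is correct and follows exactly the route the paper intends: the corollary is stated without proof precisely because it is the summation of the preceding Lemma over all $c\iN C_G$, using the fibration of $C^2_G$ over $C_G$ by the first-entry projection $\pi$. Your additional check that the fibers with $c' \,{\not\subseteq}\, \overline{c}\,{\cdot}\,c$ are empty and contribute vacuously (via $\mathcal{K}_{\overline{c},c}^{\phantom{\overline{c}c}c'} \eq 0$) is the only point needing care, and you handle it correctly.
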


Specializing to $\mu_1\inv(c_e) \,{\subseteq}\, C^2_G$, by a simple calculation we get

\begin{prop}\label{prop:ordermu1inv}
The number of diconjugacy classes of $G$ mapped by $\mu_1$ to $c_e \eq \{e\}$ is 
  \be
  |\mu_1\inv(c_e)| = \sum_{c\in C_G} |C_{Z_{\gc}}| \,.
  \label{eq:Number0ptTBlocks}
  \ee
\end{prop}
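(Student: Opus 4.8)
The plan is to specialize Corollary~\ref{cor:nodiconj} to the case $c' \eq c_e \eq \{e\}$ and then recognize the resulting double sum as a Burnside count. First I would check the ingredients entering the corollary. Since $c_e \,{\subseteq}\, \overline c\,{\cdot}\, c$ holds for every $c\iN C_G$ (because $\gc\inv\gc \eq e$), the class $c_e$ contributes a term for each $c$. The multiplicity $\mathcal{K}_{\overline c,c}^{\phantom{\overline c c}c_e}$ counts, up to conjugation, the solutions of $g_1g_2 \eq e$ with $g_1\iN\overline c$ and $g_2\iN c$; these are exactly the pairs $(g_2\inv,g_2)$ with $g_2\iN c$, which form a single conjugacy class, so $\mathcal{K}_{\overline c,c}^{\phantom{\overline c c}c_e} \eq 1$. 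The accompanying representative $x_1$ must satisfy $\gc\inv(\gc^{x_1}) \eq e$, i.e.\ $x_1\iN Z_{\gc}$, and I would simply take $x_1 \eq e$.

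Next I would evaluate the centralizer appearing in the corollary. For the pair $(\gc|e)$ one has $Z_{(\gc|e)} \eq Z_{\gc}\cap Z_e \eq Z_{\gc}$, since every element commutes with $e$. Substituting $\mathcal{K}_{\overline c,c}^{\phantom{\overline c c}c_e} \eq 1$, $x_1 \eq e$ and $Z_{(\gc|e)} \eq Z_{\gc}$ into \eqref{eq:nodiconj} collapses that formula to
\be
|\mu_1\inv(c_e)| = \sum_{c\in C_G} \frac{1}{|Z_{\gc}|} \sum_{y\in Z_{\gc}} |Z_{\gc}\cap Z_y| \,.
\ee

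The final and decisive step is to identify each inner sum with $|C_{Z_{\gc}}|$. The key observation is that for $y\iN Z_{\gc}$ the set $Z_{\gc}\cap Z_y$ is precisely the centralizer of $y$ inside the subgroup $Z_{\gc}$, i.e.\ the fixed-point set of $y$ under the conjugation action of $Z_{\gc}$ on itself. Burnside's lemma applied to that action then gives $\frac{1}{|Z_{\gc}|}\sum_{y\in Z_{\gc}}|Z_{\gc}\cap Z_y| \eq |C_{Z_{\gc}}|$, the number of conjugacy classes of $Z_{\gc}$, and summing over $c\iN C_G$ yields \eqref{eq:Number0ptTBlocks}.

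I do not expect a genuine obstacle here: the argument is a clean specialization, and the only points requiring care are the bookkeeping of the multiplicity and representative $x_1$, together with the (standard) reinterpretation of $Z_{\gc}\cap Z_y$ as a centralizer within $Z_{\gc}$. As an independent check — and a more conceptual route that bypasses Corollary~\ref{cor:nodiconj} entirely — one may argue directly: $\mu_1\inv(c_e)$ is the set of $G$-orbits of commuting pairs under simultaneous conjugation, and grouping these orbits by the conjugacy class $c$ of the first entry identifies them with the $Z_{\gc}$-orbits on $\{x\iN G \,|\, [\gc,x] \eq e\} \eq Z_{\gc}$. This is exactly the conjugation action of $Z_{\gc}$ on itself, which has $|C_{Z_{\gc}}|$ orbits, reproducing the claim.
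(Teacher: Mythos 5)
Your proposal is correct and matches the paper's own proof, which likewise invokes Corollary \ref{cor:nodiconj} (dismissing the specialization as ``a simple calculation'', which you have correctly carried out, including the identification $\mathcal{K}_{\overline{c},c}^{\phantom{\overline{c}c}c_e} \eq 1$, $x_1 \eq e$, $Z_{(\gc|e)} \eq Z_{\gc}$ and the Burnside step) and then gives exactly your second, independent argument: representatives $(\gc|x)$, orbits identified with $Z_{\gc}$-conjugation orbits, restricted to commuting pairs $x\iN Z_{\gc}$.
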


\vskip 3em

{\small \noindent{\sc Acknowledgments:}
We are grateful to Terry Gannon and Gregor Masbaum for helpful comments.
JFu is supported by VR under projects no.\ 621-2013-4207 and 2017-03836. He thanks
the Erwin-Schr\"odinger-Institute (ESI) for the
hospitality during the program ``Modern trends in topological field theory''
while part of this work was done; this program was also supported by the network
``Interactions of Low-Dimensional Topology and Geometry with Mathematical
Physics'' (ITGP) of the European Science Foundation.
JFj received support from the VR Swedish Research Links Programme under project 
no.\ 348-2008-6049, by the 985 Project Grants from the Chinese Ministry of Education, 
by the Priority Academic Program Development of Jiangsu Higher Education Institutions 
(PAPD), and by the China Science Postdoc Grant 2011M500887. He is grateful to the 
Department of Physics of Nanjing University where part of this research was carried out.

 \newpage


\end{document}